\documentclass[a4paper,DIV=12]{scrartcl}

\usepackage[utf8]{inputenc}
\usepackage[T1]{fontenc}
\usepackage{lmodern}
\usepackage[svgnames,dvipsnames,rgb]{xcolor} 
\usepackage{amsmath} 
\usepackage{amssymb}
\usepackage{amsthm}
\usepackage{hyperref}
\hypersetup{linktocpage=true,colorlinks=true,linkcolor=RoyalBlue,citecolor=PineGreen,urlcolor=RoyalBlue}
\usepackage{float}
\usepackage{enumitem}
\usepackage{tikz,pgf}
\usepackage{xspace} 
\usepackage[nameinlink]{cleveref}
\usepackage{algorithm,algorithmicx,algpseudocode}
\usepackage[margin=1in]{geometry}

\newtheorem{theorem}{Theorem}[section]
\newtheorem{lemma}[theorem]{Lemma}
\newtheorem{proposition}[theorem]{Proposition}
\newtheorem{corollary}[theorem]{Corollary}
\newtheorem{claim}[theorem]{Claim}
\newtheorem{conjecture}[theorem]{Conjecture}
\theoremstyle{definition}\newtheorem{definition}[theorem]{Definition}
\theoremstyle{definition}
\theoremstyle{definition}

\crefname{claim}{claim}{claims}

\newcommand{\blue}{\mathrm{blue}}
\newcommand{\red}{\mathrm{red}}
\newcommand{\noStableCutThree}{\mathcal{G}_{sc}}

\DeclareMathOperator{\NAC}{NAC}

\newcommand{\nnac}[1]{\NAC_{\#}(#1)}

\newcommand{\RR}{\mathbb{R}}


\usetikzlibrary{calc}
\colorlet{ecol}{black!50!white}
\definecolor{colR}{rgb}{.932,.172,.172} 
\definecolor{colB}{rgb}{.255,.41,.884} 
\colorlet{colG}{Green!60!white}
\colorlet{colY}{Gold!85!black}

\tikzstyle{vertex}=[circle, draw, fill=black, inner sep=0pt, minimum size=4pt]
\tikzstyle{fvertex}=[circle, draw, fill=white, inner sep=0pt, minimum size=4pt]
\tikzstyle{edge}=[line width=1.5pt,ecol]
\tikzstyle{redge}=[edge,colR]
\tikzstyle{bedge}=[edge,colB]
\tikzstyle{bluecomp}=[edge,decorate,decoration={coil,aspect=0,amplitude=1},colB]
\tikzstyle{redcomp}=[edge,decorate,decoration={coil,aspect=0,amplitude=1},colR]
\tikzstyle{arbitrarygraph}=[dashed,black!70!white,thick]
\tikzstyle{alabelsty}=[black!75,font=\scriptsize]
\tikzstyle{chart}=[draw=colB,fill=colB!50!white,rounded corners=0.5pt]

\title{Stable cuts, NAC-colourings and flexible realisations of graphs}

\author{Katie Clinch\thanks{School of Mathematics and Physics, University of Queensland, Australia. \texttt{k.clinch@uq.edu.au}} \and 
Dániel Garamvölgyi\thanks{HUN-REN-ELTE Egerv\'ary Research Group on Combinatorial Optimization, P\'azm\'any P\'eter s\'et\'any 1/C, 1117 Budapest, Hungary. \texttt{daniel.garamvolgyi@ttk.elte.hu}} \and 
John Haslegrave\thanks{School of Mathematical Sciences, Lancaster University, UK, \texttt{j.haslegrave@lancaster.ac.uk}} \and 
Tony Huynh\thanks{Discrete Mathematics Group, Institute for Basic Science, Daejeon, South Korea \texttt{tony@ibs.re.kr}} \and 
Jan Legerský\thanks{Faculty of Information Technology, Czech Technical University in Prague, Czech Republic, \texttt{jan.legersky@fit.cvut.cz}} \and
Anthony Nixon\thanks{School of Mathematical Sciences, Lancaster University, UK, \texttt{a.nixon@lancaster.ac.uk}}}

\date{}

\begin{document}

\maketitle

\begin{abstract}
    A (2-dimensional) realisation of a graph $G$ is a pair $(G,p)$, where $p$ maps the vertices of $G$ to $\RR^2$. A realisation is flexible if it can be continuously deformed while keeping the edge lengths fixed, and rigid otherwise. We say that $G$ is rigid if every generic realisation of $G$ is rigid; otherwise, $G$ is flexible.

    In this paper, we investigate the relationship between stable cuts and graphs which are either flexible, or admit a flexible (not necessarily generic) realisation with positive edge lengths. We strengthen a result of Chen and Yu, who proved that every $n$-vertex graph with at most $2n-4$ edges has a stable cut, by showing that every flexible graph has a stable cut. The existence of a stable cut is a sufficient, but not necessary, condition for a flexible realisation to exist. Using a result of Le and Pfender on stable cuts, we prove a conjecture of Grasegger, Legersk\'y and Schicho that characterises the minimally rigid graphs which admit a flexible realisation.

    Additionally, we investigate the number of NAC-colourings in various graphs. A NAC-colouring is a type of edge colouring introduced by Grasegger, Legersk\'y and Schicho, who showed that the existence of such a colouring characterises the existence of a flexible realisation with positive edge lengths. We provide an upper bound on the number of NAC-colourings for arbitrary graphs, and construct families of graphs, including rigid and minimally rigid ones, for which this number is exponential in the number of vertices. 
\end{abstract}

\section{Introduction}

Combinatorial rigidity theory is the study of the relationship between the combinatorial structure of a graph and the rigidity of its realisations in space. A \emph{realisation in $\RR^d$} is a pair~$(G,p)$ consisting of a graph $G$ and a mapping $p \colon V(G) \to \RR^d$. We say that $(G,p)$ is \emph{flexible} if there is a nontrivial continuous motion of the vertices during which the length of each edge remains unchanged; otherwise, $(G,p)$ is \emph{rigid}. Here, \emph{nontrivial} means that at least one distance between non-adjacent vertices changes during the motion. 

A realisation in $\RR^1$ is rigid if and only if its underlying graph is connected. However, in higher dimensions a given graph may have both rigid and flexible realisations, and substantial effort has been devoted to understanding which graphs admit a flexible (or rigid) realisation in a given dimension satisfying various nondegeneracy conditions. One of the most natural such conditions is \emph{quasi-injectivity}, which requires that adjacent vertices are mapped to distinct points, or equivalently, that every edge has a positive length in the realisation. Note that, unlike injectivity, quasi-injectivity is preserved under continuous motions.

For $d \geq 3$, every noncomplete graph has a flexible quasi-injective (in fact, injective) realisation in $\RR^d$. Indeed, we may map two non-adjacent vertices $u,v$ to distinct points in space, and then map all remaining vertices injectively into a line $\ell$ that does not contain either of these points. Rotating $u$ around $\ell$ while keeping $v$ fixed then yields a nontrivial continuous motion. 

In contrast, deciding whether a graph has a flexible quasi-injective realisation in the plane turns out to be a highly nontrivial problem. Nonetheless, Grasegger, Legerský, and Schicho \cite{GLS2019} obtained a remarkably simple and purely combinatorial characterisation of such graphs. A \emph{NAC-colouring} of a graph is a $2$-colouring of its edge set such that both colours are used and every cycle is either monochromatic, or contains at least two edges of both colours. 

\begin{theorem}[{\cite[Theorem 3.1]{GLS2019}}]\label{thm:NACiffflex}
    A connected graph has a flexible quasi-injective realisation in $\RR^2$ if and only if it has a NAC-colouring.
\end{theorem}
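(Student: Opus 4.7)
The plan is to prove the two implications separately. The forward direction (NAC-colouring gives a flex) admits a direct geometric construction, while the converse requires an algebraic argument based on valuations.

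For sufficiency, given a NAC-colouring $\delta \colon E(G) \to \{\blue, \red\}$, let $R_1, \dots, R_k$ be the connected components of the subgraph of red edges (with isolated vertices included as singletons) and $B_1, \dots, B_\ell$ those of the blue subgraph. Choose distinct reals $a_1, \dots, a_k$ and $b_1, \dots, b_\ell$, and for every $\alpha$ set
\[
    p_\alpha(v) = a_{i(v)} \cdot (1, 0) + b_{j(v)} \cdot (\cos\alpha, \sin\alpha),
\]
where $i(v), j(v)$ index the red and blue components containing $v$. For a red edge $uv$ we have $i(u) = i(v)$, so the edge length equals $|b_{j(u)} - b_{j(v)}|$, independent of $\alpha$; blue edges are handled symmetrically. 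The defining property of a NAC-colouring forces the endpoints of a red edge to lie in different blue components --- otherwise a blue path between them would close up with the red edge to form a cycle containing only one red edge --- so all edge lengths are positive and the realisation is quasi-injective. Varying $\alpha$ therefore yields a genuine flex, nontrivial because both colour classes are used and the angle between a red edge and a blue edge depends on $\alpha$.

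For necessity, given a nontrivial flex $(G, p_t)$, I would work with the complex coordinates $z_v = x_v + i y_v$ and $\bar z_v = x_v - i y_v$, which satisfy $(z_u - z_v)(\bar z_u - \bar z_v) = \ell_{uv}^2 \in \RR_{>0}$ for every edge $uv$ throughout the motion. After normalising to factor out rigid motions, view the $z_v, \bar z_v$ as elements of the function field $K$ of the (real) algebraic variety parameterising the motion, and pick a discrete valuation $\nu \colon K^* \to \mathbb{Z}$ coming from a point of this variety. Since $\nu(\ell_{uv}^2) = 0$, the quantities $\nu(z_u - z_v)$ and $\nu(\bar z_u - \bar z_v)$ are negatives of each other. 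Choosing $\nu$ generically we may assume $\nu(z_u - z_v) \neq 0$ for every edge; then colour $uv$ red when $\nu(z_u - z_v) > 0$ and blue otherwise. For each cycle the telescoping identity $\sum (z_{v_i} - z_{v_{i+1}}) = 0$ together with the ultrametric inequality forces the minimum valuation to be attained at least twice unless the cycle is monochromatic, giving at least two blue edges; applying the same argument to the conjugate coordinates yields at least two red edges, confirming the NAC property.

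The main obstacle lies in the necessity direction, specifically in setting up the algebraic framework rigorously. I must justify the extension of the motion to an algebraic variety with a suitable function field (using the polynomial nature of the distance equations and standard real semialgebraic geometry), argue that a generic valuation can be chosen to avoid the finitely many bad conditions $\nu(z_u - z_v) = 0$ (there is plenty of room to shift the base point or twist by a suitable automorphism), and finally verify that the resulting colouring uses both colour classes. The last point should reduce to showing that nontriviality of the motion modulo rigid motions forces $z_u - z_v$ to be non-constant for at least one edge, which is where the quasi-injectivity assumption and the normalisation come into play.
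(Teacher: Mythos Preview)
This theorem is not proved in the present paper: it is quoted from \cite{GLS2019} and used as a black box (the only place the argument is touched is in the proof of \Cref{prop:motionbound}, which refers to the explicit grid construction from \cite{GLS2019}). So there is no ``paper's own proof'' to compare against here.

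That said, your sketch is on the right track and matches the approach of the original source. The sufficiency direction is exactly the construction of \cite{GLS2019}: place each vertex at $a_{i(v)}(1,0)+b_{j(v)}(\cos\alpha,\sin\alpha)$ and vary $\alpha$. Your justification of quasi-injectivity via ``a blue path plus the red edge would give an almost-red cycle'' is the right observation. For the necessity direction, your valuation argument is also essentially the one in \cite{GLS2019}: pass to the function field of the motion, pick a valuation, and colour edges by the sign of $\nu(z_u-z_v)$; the telescoping sum around a cycle together with the ultrametric inequality then forces the minimum to be attained at least twice, yielding the NAC condition after also using the conjugate coordinates.

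The points you flag as obstacles are genuine and are precisely where the work lies. In particular, ``choose $\nu$ generically so that $\nu(z_u-z_v)\neq 0$ for every edge'' is not automatic: a discrete valuation on a function field over $\RR$ takes value $0$ on all nonzero constants, so you must first argue that nontriviality of the flex (after normalising away rigid motions) makes at least some $z_u-z_v$ non-constant, and then choose the centre of the valuation to lie on an appropriate branch so that these non-constant differences have nonzero valuation. This is also where surjectivity of the colouring comes from. Your outline identifies these issues correctly but does not resolve them; filling them in is the substantive part of the necessity proof in \cite{GLS2019}.
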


Although \Cref{thm:NACiffflex} provides a key tool for investigating graphs with a flexible quasi-injective realisation in the plane
(as well as graphs with a flexible injective realisation in the plane, see \cite{GLSinjective,GLSclassification}), the picture remains far from complete. In fact, deciding whether a graph~$G$ admits a NAC-colouring is NP-complete \cite{Garamvolgyi2022} even in the case when $G$ has maximum degree 5 \cite{LL2024}, and hence to hope for a complete understanding of this graph class would be overly ambitious.

Nonetheless, many natural questions still remain. One prominent problem concerns NAC-colourings of graphs that are generically minimally rigid in the plane.
A realisation $(G,p)$ is \emph{generic} if the coordinates of $p$ are algebraically independent over $\mathbb{Q}$ (that is, they satisfy no nontrivial algebraic relations with rational coefficients). It is a well-known fact that for any graph $G$ and dimension $d$, either all generic realisations of $G$ in $\RR^d$ are rigid, or none of them are. We say that a graph is \emph{(generically) $d$-rigid} if it has a rigid generic realisation in $\RR^d$; otherwise, we say that $G$ is \emph{$d$-flexible}. Thus, a graph being $d$-rigid means that ``most'' of its realisations in $\RR^d$ are rigid. However, a $d$-rigid graph may still have flexible non-generic realisations.
The investigation of these ``paradoxically flexible'' realisations has close ties with the study of linkages, an area whose roots go back to the eighteenth century \cite{Dixon,Kempe1877}.

A graph $G$ is \emph{minimally $d$-rigid} if it is $d$-rigid, but $G-e$ is $d$-flexible for all $e\in E(G)$. It is easy to show that $2$-trees (graphs obtained from an edge by attaching triangles along edges) are minimally $2$-rigid and have no NAC-colourings. It was conjectured in \cite{GLS2019} that they are the only such examples; that is, that every minimally $2$-rigid graph that is not a $2$-tree (or a single vertex) admits a NAC-colouring. This conjecture was confirmed in several special cases in \cite{GLS2019}, but since then has remained open in general. 

Our first main result is an affirmative answer to this conjecture.

\begin{theorem}\label{thm:nac}
    A minimally $2$-rigid graph on at least two vertices has a NAC-colouring if and only if it is not a $2$-tree. 
\end{theorem}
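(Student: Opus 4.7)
The plan is to prove the two directions separately. The reverse direction, that $2$-trees have no NAC-colouring, is elementary; the forward direction relies on the Le--Pfender characterisation mentioned in the abstract.

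For the reverse direction, I would induct on the construction sequence of the $2$-tree $G$. A $2$-tree on two vertices is a single edge, which cannot be $2$-coloured using both colours. For $n \geq 3$, write $G = G' + v$, where $G'$ is a smaller $2$-tree and $v$ is adjacent to both endpoints $u, w$ of some edge $uw \in E(G')$. In any NAC-colouring of $G$ the triangle $uvw$ must be monochromatic, since any $2$-colouring of a triangle using both colours has one colour used only once, which violates the NAC cycle condition. So the new edges $uv, vw$ take the same colour as $uw$, and the restriction to $G'$ is either a NAC-colouring of $G'$ or a monochromatic colouring of $G'$; the latter forces all of $G$ to be monochromatic, contradicting the use-both-colours requirement. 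By induction, no NAC-colouring of $G$ exists.

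For the forward direction, let $G$ be a minimally $2$-rigid graph on $n \geq 2$ vertices that is not a $2$-tree. Then $|E(G)| = 2n-3$ by the Laman count, so by the Le--Pfender characterisation of graphs with $n$ vertices, $2n-3$ edges, and no stable cut (which, restricted to the Laman class, should leave only the $2$-trees) $G$ has a stable cut $S$. I would then partition the components of $G - S$ into two non-empty groups with vertex sets $A$ and $B$, so that there are no edges between $A$ and $B$. Colour an edge red if both its endpoints lie in $A \cup S$, and blue if both lie in $B \cup S$; since $S$ is independent, every edge receives a unique colour, and since $G$ is connected, both colours are used.

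To verify the cycle condition, consider a cycle $C$ in $G$. If $C$ avoids $S$ it lies entirely in $A$ or entirely in $B$ and is monochromatic. Otherwise, listing the $S$-vertices of $C$ in cyclic order as $s_1, \ldots, s_m$ and using that $S$ is independent, consecutive $s_i$ are separated on $C$ by sub-paths with at least one interior vertex, each lying wholly in $A$ or wholly in $B$. Such a sub-path together with its two boundary edges contributes at least two edges of a single colour to $C$, so $C$ is either monochromatic or contains at least two edges of each colour. I expect the main obstacle to be the first step of the forward direction: unpacking the Le--Pfender characterisation to confirm that within the Laman class only the $2$-trees avoid stable cuts. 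The NAC-colouring construction from a stable cut is then essentially mechanical, and the reverse direction is a routine induction.
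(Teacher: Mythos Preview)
Your reverse direction and the stable-cut-to-NAC construction are both fine and essentially match the paper (the latter is the paper's Lemma~3.1). The gap is exactly where you flagged it: it is \emph{not} true that, within the minimally rigid class, only $2$-trees lack stable cuts. The $3$-prism is minimally rigid, is not a $2$-tree, and has no stable cut (it is $3$-connected and its maximum stable set has size~$2$). More generally, the Le--Pfender family $\mathcal{G}_{sc}$ of graphs on $n$ vertices with $2n-3$ edges and no stable cut is built recursively from $K_2$ by gluing triangles \emph{and $3$-prisms} along edges or triangles; any graph in this family that uses at least one $3$-prism is minimally rigid, not a $2$-tree, and has no stable cut.

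The paper closes this gap with an additional argument (its Proposition~3.3): every graph in $\mathcal{G}_{sc}$ that is not a $2$-tree still has a NAC-colouring, even though it has no stable cut. The point is that the $3$-prism itself has a (unique up to swapping colours) NAC-colouring, and this can be extended across the recursive gluing. Concretely, if the last gluing step attaches a $3$-prism, colour the rest of the graph monochromatically by the colour of the glued edge or triangle and use the $3$-prism's NAC-colouring; if the last step attaches a triangle, induct on the smaller graph (which is still not a $2$-tree) and extend monochromatically over the new triangle. Your proposal needs this extra case to be complete.
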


Our proof of \Cref{thm:nac} relies on the notion of stable cuts. A \emph{stable cut} of a graph $G$ is a set of pairwise non-adjacent vertices $X$ such that $G-X$ is disconnected. It was already observed in \cite{GLS2019} that if $G$ has a stable cut, then it admits a NAC-colouring. On the other hand, Le and Pfender \cite{LePf} characterised the class $\noStableCutThree$ of graphs $G$ with $2|V(G)|-3$ edges and no stable cuts (\Cref{thm:stableCutMinusThree} below; we note that a significant gap in \cite{LePf} was recently discovered and fixed by Rauch and Rautenbach \cite{rauch_2024}). Since minimally $2$-rigid graphs satisfy this edge count, the proof of \Cref{thm:nac} reduces to verifying that, except for $2$-trees, every graph in~$\noStableCutThree$ admits a NAC-colouring.

We further explore the relationship between stable cuts and rigidity by showing that every $2$-flexible graph has a stable cut. More precisely, we show that every $2$-connected $2$-flexible graph $G$ has a stable cut that avoids any given vertex (\Cref{prop:flexibleHasStableSeparator}). This strengthens a result of Chen and Yu \cite{ChenYu}, who proved the analogous statement for $2$-connected graphs $G$ with at most $2|V(G)|-4$ edges.
We also give an alternative proof of the fact that $2$-flexible graphs have a stable cut by showing a precise correspondence between stable cuts and \emph{NAP-colourings}, a variant of NAC-colourings related to the existence of quasi-injective flexible realisations on the sphere (\Cref{lem:NAPiffStableCut}). 

As the second main thread of the paper, we investigate the number of NAC-colourings. Just as the existence of a NAC-colouring characterises the existence of a flexible quasi-injective realisation in the plane, the number of NAC-colourings provides a lower bound for the number of distinct motions arising from such realisations. See \Cref{prop:motionbound} for a precise statement.

We prove, among other results, that every graph $G$ has fewer than $4^{|V(G)|}$ NAC-colourings (\Cref{thm:NACupperbound}). This is the first nontrivial improvement (for general graphs) of the naive bound~$2^{|E(G)|}$.
Moreover, for each sufficiently large $n$, we construct an $n$-vertex minimally rigid graph with at least $2.13^n$ NAC-colourings (\Cref{cor:2+}). 

Definitions and preliminary material are given in the next section. We prove \Cref{thm:nac} in \Cref{sec:minimallyrigid}. In \Cref{sec:flexible}, we consider stable cuts in $2$-flexible graphs, and in \Cref{sec:numberofNACs} we investigate questions regarding the number of NAC-colourings. Finally, we highlight some open problems in \Cref{sec:conclusion}.  

\section{Preliminaries}
\label{sec:prelims}

We begin by reviewing some background from graph theory.  Given a graph $G$, we use $V(G)$ and $E(G)$ to denote the vertex set and the edge set of $G$, respectively.
 If $V'\subseteq V(G)$, then $G[V']$ denotes the vertex-induced subgraph,
and if $E'\subseteq E(G)$, then $G[E']$ denotes the edge-induced subgraph, i.e., the minimal subgraph of $G$ containing $E'$. 
The neighbourhood of a vertex $v \in V(G)$ is denoted by $N_G(v)$.  A \emph{stable set} is a set of pairwise non-adjacent vertices.  We say that $X \subseteq V(G)$ is a \emph{cut} of $G$ if $G-X$ is disconnected, and $X$ is a \emph{stable cut} if it is both a stable set and a cut.   
The complete graph on $n$ vertices is denoted by $K_n$, and the complete bipartite graph on vertex sets of size $n_1$ and $n_2$ by $K_{n_1,n_2}$.
For a positive integer $m$, we let $[m] := \{1,\ldots,m\}$.  

Let $G$ be a graph without isolated vertices.
A \emph{separation} of $G$ is a pair of subgraphs $\{G_1, G_2\}$ of $G$ such that there is a partition $\{E_1, E_2\}$ of $E(G)$ with $G_1=G[E_1]$, $G_2=G[E_2]$, $V(G_1) \setminus V(G_2) \neq \varnothing$, and $V(G_2) \setminus V(G_1) \neq \varnothing$. A separation $\{G_1, G_2\}$ is \emph{stable} if $V(G_1 \cap G_2)$ is a stable set in $G$. Note that if $\{G_1,G_2\}$ is a stable separation, 
then $V(G_1) \cap V(G_2)$ is a stable cut of $G$. 

\subsection{Rigid graphs}

We first define rigid and flexible realisations formally. Let $G$ be a graph. We say that two realisations $(G,p)$ and $(G,q)$ in $\mathbb{R}^d$ are \emph{equivalent} if $\lVert p(u)-p(v)\rVert = \lVert q(u)-q(v)\rVert$ holds for every $uv \in E(G)$, where $\lVert \cdot\rVert$ denotes the Euclidean norm. Similarly, we say that the two realisations are \emph{congruent} if $\lVert p(u)-p(v)\rVert = \lVert q(u)-q(v)\rVert$ holds for all $u,v \in V(G)$.
A \emph{continuous motion} of $(G, p)$ is a continuous function $\phi : (-1, 1) \times V(G) \rightarrow \mathbb{R}^d$ such that $\phi_0 = p$, and such that the realisations $(G,p)$ and $(G,\phi_t)$ are equivalent for all $t \in (-1,1)$, where $\phi_t : V(G) \rightarrow \mathbb{R}^d$ is defined by putting $\phi_t(v) = \phi(t, v)$ for all $v \in V$. 

The continuous motion $\phi$ is \emph{trivial} if the realisations $(G,p)$ and $(G,\phi_t)$ are congruent for all $t \in (-1,1)$. Hence, in a nontrivial motion, the edge lengths are preserved, but the distance between some non-adjacent vertices changes. A realisation is said to be \emph{flexible} if it has a nontrivial continuous motion and \emph{rigid} if it is not flexible. Equivalently, the realisation $(G, p)$ is \emph{rigid} if there exists an $\varepsilon > 0$ such that for any $(G, q)$ equivalent to $(G,p)$ that satisfies $\|p(u)- q(u)\| < \varepsilon$ for all $v \in V(G)$, $(G, q)$ can be obtained from $(G, p)$ by an isometry of $\mathbb{R}^d$.
See \cite{AsimowRoth} for more details.

A graph is called \emph{$d$-rigid} if it has a generic rigid realisation in $\RR^d$ (equivalently, all generic realisations in $\RR^d$ are rigid \cite{AsimowRoth}), and \emph{$d$-flexible} otherwise.
We say that a graph is \emph{minimally $d$-rigid} if it is $d$-rigid but the deletion of any edge yields a $d$-flexible graph.

Throughout the paper, we concentrate on $2$-dimensional realisations and hence we shall refer to $2$-rigid and $2$-flexible graphs as rigid and flexible graphs, respectively. Similarly, we will simply write ``realisation'' instead of ``realisation in $\RR^2$''. See \cite{jordan_2016} for a detailed introduction to combinatorial rigidity theory with an emphasis on the $2$-dimensional case. 

It is a classical observation of Maxwell \cite{Maxwell} that a minimally rigid graph $G$ has exactly $2|V(G)|-3$ edges. Going further,
minimally rigid graphs were characterised in graph-theoretic terms by Pollaczek-Geiringer \cite{Geiringer} and independently by Laman \cite{Laman}. This characterisation leads to a deterministic polynomial time algorithm to test if a graph is minimally rigid (see, for example, \cite{LeeStreinu}).  This algorithm can also efficiently find the \emph{rigid components} of $G$; these are the maximal rigid subgraphs of $G$. It is well-known that the union of two rigid graphs intersecting in at least $2$ vertices is rigid (see~\cite[Theorem 2.2.9]{jordan_2016}, for instance). It follows that each pair of rigid components share at most $1$ vertex. Hence, the rigid components partition the edge set of the graph. 

Given a graph, we call the operation of adding a new vertex and two new edges connecting it to existing vertices a \emph{$0$-extension}. It is well-known that $0$-extensions preserve the property of being (minimally) rigid, see~\cite{jordan_2016}.
In the following, it will be convenient to say that a $0$-extension is \emph{open} if the neighbours of the new vertex are non-adjacent.
We say that a graph is a \emph{$0$-extension graph} if it can be constructed from an edge by a sequence of $0$-extensions. 

A \emph{$2$-tree} is a $0$-extension graph such that no open $0$-extension is used. Equivalently (as defined in the introduction), a $2$-tree is a graph that can be obtained from an edge by attaching triangles along edges. Note that according to our definition, a single edge is a $2$-tree.

\subsection{NAC-colourings}

Let us recall the following definition from the introduction.

\begin{definition}
    \label{def:NAC}
    Let $G$ be a graph. An edge colouring $c\colon  E(G) \rightarrow \{\red, \blue\}$
    is a \emph{NAC-colouring} if it is surjective and every cycle is either monochromatic
    or contains at least two red and two blue edges.
\end{definition}

\begin{figure}[ht]
    \centering
    \begin{tikzpicture}[scale=1.5]
        \node[fvertex] (a) at (0,0) {};
        \node[fvertex] (b) at (1,0) {};
        \node[fvertex] (c) at (0.5,0.5) {};
        \node[fvertex] (d) at (0,1) {};
        \node[fvertex] (e) at (1,1) {};
        \node[fvertex] (f) at (0.5,1.5) {};
        \draw[edge] (a)edge(b) (b)edge(c) (c)edge(a) (d)edge(e) (e)edge(f) (f)edge(d) (a)edge(d) (b)edge(e) (c)edge(f);
    \end{tikzpicture}
    \quad
    \begin{tikzpicture}[scale=1.5]
        \node[vertex] (a) at (0,0) {};
        \node[vertex] (b) at (1,0) {};
        \node[vertex] (c) at (0.5,0.5) {};
        \node[vertex] (d) at (0,1.5) {};
        \node[vertex] (e) at (1,1.5) {};
        \node[vertex] (f) at (0.5,1) {};
        \draw[bedge] (a)edge(b) (b)edge(c) (c)edge(a) (d)edge(e) (e)edge(f) (f)edge(d) ;
        \draw[redge] (a)edge(d) (b)edge(e) (c)edge(f);
    \end{tikzpicture}
    \qquad
    \begin{tikzpicture}[scale=0.75]
        \draw[black!50!white, dashed] (-1.5,0)edge(2.3,0);
        \draw[black!50!white, dashed] (0,1.55)edge(0,-2.1);
        \node[fvertex] (2) at (1.8, 0) {};
        \node[fvertex] (5) at (-1., 0) {};
        \node[fvertex] (7) at (0.7, 0) {};
        \node[fvertex] (1) at (0, -1.75) {};
        \node[fvertex] (6) at (0,  1.2) {};
        \node[fvertex] (4) at (0, -0.8) {};
        \draw[edge]  (6)edge(5) (5)edge(4) (7)edge(4) (7)edge(6);
        \draw[edge] (2)edge(4) (2)edge(6);
        \draw[edge] (1)edge(5) (7)edge(1) ;
        \draw[edge] (2)edge(1);
    \end{tikzpicture}
    \quad
    \begin{tikzpicture}[scale=1.2]
        \node[vertex] (a1) at (-0.5, -0.866025) {};
        \node[vertex] (a2) at (0.5, -0.866025) {};
        \node[vertex] (a3) at (1., 0.) {};
        \node[vertex] (a4) at (0.5, 0.866025) {};
        \node[vertex] (a5) at (-0.5, 0.866025) {};
        \node[vertex] (a6) at (-1.,  0.) {};
        \draw[bedge] (a1)edge(a2) (a1)edge(a4) (a1)edge(a6);
        \draw[redge] (a2)edge(a3) (a2)edge(a5) (a3)edge(a4) (a3)edge(a6) (a5)edge(a4) (a5)edge(a6);
    \end{tikzpicture}
    \quad
    \begin{tikzpicture}[scale=1.2]
        \node[vertex] (a1) at (-0.5, -0.866025) {};
        \node[vertex] (a2) at (0.5, -0.866025) {};
        \node[vertex] (a3) at (1., 0.) {};
        \node[vertex] (a4) at (0.5, 0.866025) {};
        \node[vertex] (a5) at (-0.5, 0.866025) {};
        \node[vertex] (a6) at (-1.,  0.) {};
        \draw[bedge] (a1)edge(a2) (a1)edge(a4) (a2)edge(a3) (a3)edge(a4) (a5)edge(a6);
        \draw[redge] (a1)edge(a6) (a2)edge(a5) (a3)edge(a6) (a5)edge(a4) ;
    \end{tikzpicture}
    \caption{The $3$-prism and $K_{3,3}$ are rigid, but both have flexible realisations (the figures with empty vertices and grey edges).
    The $3$-prism has a unique NAC-colouring modulo swapping colours, while all NAC-colourings of $K_{3,3}$ are isomorphic to the two displayed ones.}
    \label{fig:3prismK33flexible}
\end{figure}
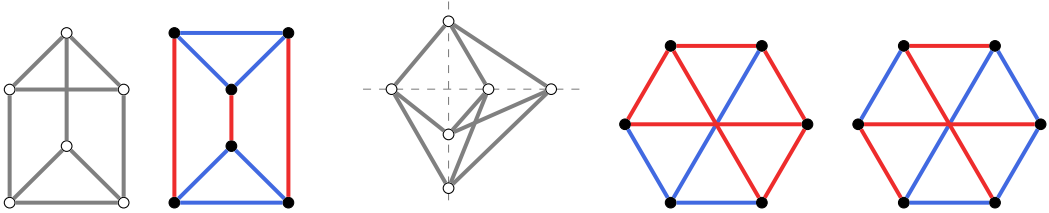
\Cref{fig:3prismK33flexible} shows some examples of NAC-colourings. For convenience, let us also introduce some additional terminology. Let $G$ be a graph and let $c\colon  E(G) \rightarrow \{\red, \blue\}$ be an edge colouring.
A subgraph $H$ of $G$ is \emph{almost red} if exactly one edge of $H$ is blue, and is \emph{almost blue} if exactly one edge of $H$ is red.
We say that $H$ is \emph{almost monochromatic} if it is almost red or almost blue.
The \emph{red components} of $G$ are the components of the graph with vertex set $V(G)$ and edge set the set  of red edges of $G$.
We define the \emph{blue components} of $G$ similarly.
A \emph{monochromatic component} is a red or a blue component.
Using this terminology, $c$ is a NAC-colouring if and only if it is surjective and has no almost monochromatic cycles.
This is equivalent to the condition that $c$ is surjective and
every monochromatic component is an induced subgraph, see~\cite[Lemma~2.4]{GLS2019}.

Note that if $c$ is a NAC-colouring, then the colouring obtained from $c$ by interchanging colours is also a NAC-colouring.  This motivates the following definition.  

\begin{definition}
    For a graph $G$, we let $\nnac{G}$ denote the number of NAC-colourings of $G$ divided by $2$.  
\end{definition}

Let us outline how the parameter $\nnac{G}$ is related to counting motions of a graph $G$.
Given an edge $uv \in E(G)$, a \emph{motion of $G$ with fixed edge $uv$} is a continuous motion of quasi-injective, pairwise equivalent realisations of $G$ such that $u$ and~$v$ are mapped to the same two points
for all realisations in the motion. Note that the choice of the fixed edge is inconsequential in that given a motion of $G$ with fixed edge $uv$, we can obtain a motion with fixed edge $u'v'$ by applying a suitable isometry to each realisation in the original motion.
    
Two motions of $G$ with fixed edge $uv$
are \emph{angle-different} if there is a pair of edges
such that their mutual angle is preserved in one motion, but changes in the other.

\begin{proposition}\label{prop:motionbound}
    For every graph $G$ and $uv \in E(G)$, there are at least $\nnac{G}$ pairwise angle-different motions of $G$ with fixed edge $uv$.
\end{proposition}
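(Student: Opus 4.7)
My plan is to realise each NAC-colouring $c$ of $G$ geometrically by a ``product'' embedding indexed by the red and blue components, and to define a motion of this embedding in which the red components stay fixed while the blue ones rotate. Two edges will preserve their mutual angle under this motion if and only if they receive the same colour under $c$; hence the assignment $c \mapsto M_c$ descends to an injection from NAC-colourings modulo colour swap into angle-equivalence classes of motions with fixed edge $uv$.

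First I would set up the construction. Given a NAC-colouring $c$, the induced-component property combined with surjectivity forces each colour class to split $V(G)$ into at least two monochromatic components, say $R_1,\dots,R_k$ and $B_1,\dots,B_\ell$ with $k,\ell \geq 2$. Write $r(v)$ and $b(v)$ for the indices of the red and blue components of $v$. Using the induced-component property again, every red edge $xy$ satisfies $r(x)=r(y)$ and $b(x)\neq b(y)$, and symmetrically for blue edges. After possibly swapping the colours of $c$, I may assume $c(uv) = \red$. I then pick points $a_1,\dots,a_k$ and $b_1,\dots,b_\ell$ in general position in $\RR^2$, translated so that $a_{r(u)} = 0$, and define
\[
    p_t(v) = \rho_t(a_{r(v)}) + b_{b(v)},
\]
where $\rho_t$ denotes rotation of $\RR^2$ by angle $t$ about the origin.

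A short verification then shows that $(G, p_t)_{t \in (-1,1)}$ is a motion with fixed edge $uv$. The edge $uv$ is pinned, since $r(u)=r(v)$ gives $p_t(u) = b_{b(u)}$ and $p_t(v) = b_{b(v)}$, both independent of $t$. Edge lengths are preserved: for a red edge $xy$, $p_t(x)-p_t(y) = b_{b(x)}-b_{b(y)}$ is constant; for a blue edge, $p_t(x) - p_t(y) = \rho_t(a_{r(x)} - a_{r(y)})$ has constant norm. Quasi-injectivity holds for all $t$, since the endpoints of every edge agree in exactly one of the red/blue component indices, so $p_t(x) - p_t(y)$ is either a fixed nonzero vector or a rotation of a nonzero vector, both nonzero by genericity. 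Finally, $k \geq 2$ gives a vertex $w$ with $r(w) \neq r(u)$, so $a_{r(w)} \neq 0$ generically and $p_t(w)$ moves non-trivially with $t$.

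It remains to establish the angle characterisation. In $M_c$, every red edge has the constant direction $b_{b(x)}-b_{b(y)}$, while every blue edge's direction is $\rho_t(a_{r(x)}-a_{r(y)})$, so all blue edges rotate together at angular speed $1$ relative to the red ones. Thus two edges have preserved mutual angle throughout $M_c$ if and only if they share the same colour under $c$, and the bipartition $\{c^{-1}(\red), c^{-1}(\blue)\}$ of $E(G)$ is recoverable from $M_c$. If $c_1, c_2$ are NAC-colourings with $c_2 \notin \{c_1, \bar c_1\}$ (where $\bar c_1$ denotes the colour swap of $c_1$), then their bipartitions differ, so some pair of edges has preserved mutual angle in exactly one of $M_{c_1}, M_{c_2}$, making the two motions angle-different. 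This yields the required $\nnac{G}$ pairwise angle-different motions. The main technical point is ensuring that the product embedding remains quasi-injective throughout the motion, and this hinges entirely on the structural observation that the endpoints of every edge agree in exactly one of their two component indices.
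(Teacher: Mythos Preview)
Your proposal is correct and follows essentially the same approach as the paper: the paper's proof simply cites the construction from \cite{GLS2019} (which is exactly the ``product'' embedding $p_t(v) = \rho_t(a_{r(v)}) + b_{b(v)}$ you write out), notes that all edges of the same colour keep their mutual angles, and concludes that distinct NAC-colourings up to colour swap yield angle-different motions. Your version is more explicit in verifying quasi-injectivity and the angle characterisation, but the underlying idea is identical.
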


\begin{proof}
    The construction used in the proof of \Cref{thm:NACiffflex} in \cite{GLS2019} shows that for every NAC-colouring~$c$,
    there is a motion such that all red edges keep their mutual angles, and all blue ones as well.
    Hence, every pair of distinct NAC-colourings of $G$ yield angle-different motions of $G$, unless they differ only by swapping colours.    
\end{proof}

\section{NAC-colourings of minimally rigid graphs}\label{sec:minimallyrigid}

In this section, we prove \Cref{thm:nac}. As outlined in the introduction, our proof relies on the following easy observation. See \Cref{lem:NAPiffStableCut} below for a proof of (a more precise version of) this statement.

\begin{lemma}[{\cite[Theorem~4.4]{GLS2019}}]
    \label{thm:stableCutImpliesNAC}
    If $G$ is a connected graph with a stable cut, then $G$ has a NAC-colouring.
\end{lemma}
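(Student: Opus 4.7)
The plan is to build an explicit NAC-colouring directly from the stable cut. Suppose $X$ is a stable cut of $G$. I will fix any connected component $C$ of $G-X$, set $A:=V(C)$ and $B:=V(G)\setminus A$. Then $X\subseteq B$, and crucially, there are no edges of $G$ between $A$ and $B\setminus X$, because $X$ separates $A$ from the other components of $G-X$. I will colour an edge red if at least one of its endpoints lies in $A$, and blue otherwise.

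To verify that this is a NAC-colouring I would use the equivalent characterisation from \cite[Lemma~2.4]{GLS2019} recalled in \Cref{sec:prelims}: it suffices to show that the colouring is surjective and that every monochromatic component is an induced subgraph of $G$. Surjectivity is immediate, since $G$ is connected and $G-X$ has at least two components, producing both red and blue edges. The blue subgraph is exactly $G[B]$, so its components are manifestly induced subgraphs of $G$. The unique non-trivial red component spans $A\cup(N_G(A)\cap X)$; inside this vertex set the only edges of $G$ that could fail to be red would have to lie inside $N_G(A)\cap X\subseteq X$, and no such edges exist precisely because $X$ is stable.

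I do not anticipate any serious obstacle: the stability of $X$ is used exactly once, to rule out the potentially problematic edges within $X$, while everything else is a routine verification. This matches the excerpt's description of the result as an ``easy observation''. The same construction should also yield the sharper statement hinted at just before \Cref{lem:NAPiffStableCut}, namely that every stable separation $\{G_1,G_2\}$ of $G$ produces a NAC-colouring by colouring the edges of $G_1$ red and those of $G_2$ blue, with a converse obtained by taking a suitable union of monochromatic components.
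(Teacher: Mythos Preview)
Your proposal is correct and is essentially the paper's own argument: the paper defers the proof to \Cref{lem:NAPiffStableCut}, where exactly the colouring you describe (edges of one side of the stable separation red, the rest blue) is shown to be a NAP-colouring, hence in particular a NAC-colouring. The only cosmetic difference is that the paper checks the NAP condition (every edge has an endpoint all of whose incident edges share its colour), whereas you verify the equivalent ``monochromatic components are induced'' criterion directly; the underlying construction and the single use of stability of $X$ are identical.
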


To apply \Cref{thm:stableCutImpliesNAC}, we need to determine which minimally rigid graphs have a stable cut. Fortunately, this has already been done by Le and Pfender \cite{LePf}, with a gap in their proof subsequently filled by Rauch and Rautenbach \cite{rauch_2024}. In fact, their characterisation is valid for the larger family of graphs $G$ with $2|V(G)|-3$ edges. 
Define the family $\noStableCutThree$ as follows:
\begin{enumerate}
    \item the single edge $K_2$ is in $\noStableCutThree$, and
    \item if $G_1\in\noStableCutThree$ and $G_2$ is a $3$-cycle or a $3$-prism, then the graph obtained by gluing $G_1$ and $G_2$ along an edge or a $3$-cycle is in $\noStableCutThree$.
\end{enumerate}

It is not difficult to see that every graph $G \in \noStableCutThree$ has $2|V(G)|-3$ edges and no stable cuts. The main result of \cite{LePf,rauch_2024} shows that the converse is also true.

\begin{theorem}[{\cite[Theorem 5]{LePf}, \cite[Theorem 4]{rauch_2024}}]
    \label{thm:stableCutMinusThree}
    Let $G$ be a graph with $|E(G)|=2|V(G)|-3$.
    Then either $G$ has a stable cut, or $G$ belongs to $\noStableCutThree$.
\end{theorem}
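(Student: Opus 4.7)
The plan is to proceed by induction on $n := |V(G)|$ after confirming the easy direction. A routine induction on the construction shows that every $G \in \noStableCutThree$ satisfies $|E(G)| = 2|V(G)| - 3$ and has no stable cut: each of the two gluings (triangle along an edge, $3$-prism along an edge or triangle) adds exactly $2\Delta n$ edges, and any stable cut of the glued graph could meet the gluing clique in at most one vertex, reducing the question to stable-cut-freeness of one of the pieces.

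For the nontrivial direction, let $G$ have $2n-3$ edges and no stable cut. The base case $n = 2$ yields $G = K_2 \in \noStableCutThree$. Assuming $n \geq 3$, I first establish basic structural constraints. The graph $G$ is $2$-connected (a cut vertex is itself a stable cut); every $2$-vertex cut induces an edge (else the pair is stable); and $\delta(G) \in \{2,3\}$, since $\delta(G) \geq 4$ is ruled out by the average degree $4 - 6/n$, while $\delta(G) \leq 1$ contradicts $2$-connectivity for $n \geq 3$.

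The easier case is $\delta(G) = 2$. Pick a degree-$2$ vertex $v$ with neighbours $u, w$. If $uw \notin E(G)$, then $\{u,w\}$ is a stable cut that isolates $v$, a contradiction; hence $uvw$ is a triangle. Set $G' := G - v$, so $|E(G')| = 2(n-1) - 3$. I verify that $G'$ has no stable cut: given $S' \subseteq V(G')$ stable in $G'$ and disconnecting $G'$, the set $S'$ is also stable in $G$, and either $S'$ disconnects $G$ (contradicting our assumption) or adding $v$ back merges two components of $G' - S'$. The latter is impossible because the only neighbours of $v$ are $u$ and $w$, and the edge $uw \in E(G')$ forces them into the same component of $G' - S'$. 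By induction $G' \in \noStableCutThree$, and $G$ is obtained from $G'$ by gluing a triangle along $uw$, so $G \in \noStableCutThree$.

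The hard case is $\delta(G) = 3$, which I expect to be the main obstacle. Here the degree sum $4n - 6$ combined with $\delta(G) \geq 3$ forces $n \geq 6$ and in fact at least six vertices of degree exactly $3$. The strategy is to locate a $3$-prism subgraph $H \subseteq G$ that meets the rest of $G$ only along an edge or a triangle, and such that the graph $G^\ast$ obtained by deleting the ``interior'' of $H$ still has $|E(G^\ast)| = 2|V(G^\ast)| - 3$ and no stable cut; induction then finishes the proof. To find such a reducible prism, one analyses the local neighbourhoods of the degree-$3$ vertices: the absence of stable cuts heavily constrains how triangles and shared neighbours can interact (for instance, for a degree-$3$ vertex $v$ with neighbours $a,b,c$, either $\{a,b,c\}$ induces an edge or $\{a,b,c\}$ must fail to separate $G$), and a careful case analysis identifies an attached $3$-prism whose removal preserves both invariants. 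This is precisely the step where the original argument of Le and Pfender had a gap, subsequently fixed by Rauch and Rautenbach~\cite{rauch_2024}, confirming that extracting a reducible prism is the genuinely delicate part of the proof.
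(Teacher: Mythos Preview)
The paper does not prove this theorem; it is quoted from \cite{LePf} and \cite{rauch_2024} and used as a black box in the proof of \Cref{thm:nac}. Hence there is no ``paper's own proof'' to compare against. Your sketch follows what is, to the best of my knowledge, the structure of the argument in those references: the reductions for $\delta(G)=2$ are correct and complete, while for $\delta(G)=3$ you outline the intended strategy (locate a reducible $3$-prism and induct) but do not carry it out. You yourself flag this as the delicate step and the locus of the gap repaired in \cite{rauch_2024}, so your proposal is accurate in spirit but stops short of a proof precisely where the real work lies.
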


It only remains to show that \Cref{thm:nac} holds for graphs in $\noStableCutThree$.

\begin{proposition}
    \label{prop:noStableCutThreeHaveNAC}
    Every graph in $\noStableCutThree$ that is not a $2$-tree has a NAC-colouring.
\end{proposition}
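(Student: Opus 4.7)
My plan is to induct on the construction sequence of $G \in \noStableCutThree$. By the inductive definition of $\noStableCutThree$, any $G$ in the class can be built from $K_2$ by successively gluing a triangle (along an edge) or a $3$-prism (along an edge or a $3$-cycle). Since $G$ is not a $2$-tree, any such sequence must include at least one prism step; I will focus on the \emph{last} such step. The proof then reduces to two lemmas: a ``prism lemma'' saying that gluing a prism to any connected graph produces a graph with a NAC-colouring, and an ``extension lemma'' saying that if $H$ has a NAC-colouring, so does any graph obtained from $H$ by gluing a triangle along an edge. Together these imply the proposition: apply the prism lemma at the last prism step, then iterate the extension lemma through all the remaining triangle steps.

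The extension lemma will be straightforward. Given a NAC-colouring of $H$, I colour both new edges of the glued triangle with the same colour as the shared edge $e$. The new vertex has degree $2$, so any cycle through it consists of those two new edges together with a path $P$ in $H$ from one endpoint of $e$ to the other; a short argument using that the corresponding cycle $P + e$ in $H$ satisfies the NAC condition transfers the property to the new graph.

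For the prism lemma the construction will be to colour every edge of $H$ blue and colour the prism with one of its standard NAC-colourings, chosen so that the shared part is entirely blue. Such a colouring always exists: the standard NAC-colouring of the prism has both rim triangles blue and the three connectors red, which handles the cases where the shared part is a rim edge or a rim triangle; when the shared part is a connector, I swap the two colours on the prism. I will then verify that the combined colouring is a NAC-colouring via the equivalent criterion (from the preliminaries) that every monochromatic component be induced. The blue component containing $V(H)$ either stays unchanged (connector case) or picks up a single extra rim-triangle vertex, the remaining rim triangle of the prism becomes a separate blue component on $3$ fresh vertices, and the red edges form either three disjoint connector-edges or two rim triangles that meet $V(H)$ only at the gluing interface; in each case being induced is immediate from the fact that the only edges of the prism with both endpoints in $V(H)$ are those in the shared edge or triangle.

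The main obstacle I anticipate is the bookkeeping in the prism lemma, which splits into several sub-cases (rim-edge, connector, or rim-triangle as the shared part) that must be checked separately. For the glue-along-triangle case one has to verify that the three shared vertices, while lying in a common blue component of the combined colouring, sit in \emph{distinct} red components, so that the prism's red edges attach cleanly without creating an almost-monochromatic cycle crossing the interface. This last point follows from the induced-subgraph criterion applied to $H$: if two shared vertices lay in a common red component, the edge of the shared triangle between them would have to be red rather than blue. Since $V(H) \cap V(\mathrm{prism})$ has at most $3$ vertices, the case analysis is short and the proposition follows.
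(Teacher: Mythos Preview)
Your proposal is correct and follows essentially the same route as the paper: both arguments hinge on the same two observations, namely that a NAC-colouring survives gluing a triangle along an edge (colour the new edges with the colour of the shared edge), and that gluing a $3$-prism always yields a NAC-colouring (take the prism's unique NAC-colouring and make the rest monochromatic in the colour of the interface). The only cosmetic difference is that the paper inducts on the last step of the construction rather than isolating the last prism step, which lets it avoid splitting into separate ``prism'' and ``extension'' lemmas; your more detailed verification via the induced-subgraph criterion is fine but not needed.
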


\begin{proof}
    Let $G$ be a graph in $\noStableCutThree$ that is not a $2$-tree.  Define $\gamma(G)$ to be the minimum number of steps required to construct $G$ in the recursive definition of $\noStableCutThree$. We proceed by induction on~$\gamma(G)$.  The base case is vacuous, since $K_2$ is a $2$-tree.
    Note that the $3$-prism has a (unique, up to swapping colours) NAC-colouring, illustrated in \Cref{fig:3prismK33flexible}.
    
    Suppose that $G$ is obtained from $G_1 \in\noStableCutThree$ and $G_2$ by gluing along an edge $uv$ or a 3-cycle with vertices $\{u,v,w\}$, where $\gamma(G_1)=\gamma(G)-1$ and $G_2$ is either a $3$-cycle or a $3$-prism.
    If~$G_2$ is a $3$-cycle, then $G_1$ cannot be a $2$-tree, since $G$ is not a $2$-tree. 
    By induction, $G_1$ has a NAC-colouring $c_1$,
    and we can extend $c_1$ to a NAC-colouring $c$ of $G$ by setting $c(e)=c_1(uv)$ for all $e\in E(G_2)$.
    On the other hand, if $G_2$ is $3$-prism, then we extend the NAC-colouring of~$G_2$ by colouring all edges in $G_1$
    by the colour of the edge (resp.\ $3$-cycle) along which we are gluing.
\end{proof}

\begin{proof}[Proof of \Cref{thm:nac}]
    Let $G$ be a minimally rigid graph on at least two vertices that is not a $2$-tree. Note that $G$ has $2|V(G)| - 3$ edges.
    If $G$ has a stable cut, then it has a NAC-colouring by \Cref{thm:stableCutImpliesNAC}.
    Otherwise, $G$ is in $\noStableCutThree$ by \Cref{thm:stableCutMinusThree}, and hence
    has a NAC-colouring by \Cref{prop:noStableCutThreeHaveNAC}.
\end{proof}
The proof above actually shows the existence of a NAC-colouring for every graph $G$ with $2|V(G)|-3$ edges,
not only minimally rigid ones. 
However, the case when $G$ is not minimally rigid follows directly from \Cref{thm:NACiffflex},
since then $G$ is flexible.

We can summarize \Cref{thm:NACiffflex,thm:nac} as follows.

\begin{theorem}\label{thm:mainlaman}
    Let $G$ be a minimally rigid graph.
    The following are equivalent:
    \begin{enumerate}
        \item $G$ has a NAC-colouring,
        \item $G$ is not a $2$-tree, and
        \item $G$ has a flexible quasi-injective realisation in the plane.
    \end{enumerate}
\end{theorem}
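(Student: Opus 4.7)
The plan is straightforward: the theorem is essentially a repackaging of \Cref{thm:nac} (just proved) together with \Cref{thm:NACiffflex} (recalled from \cite{GLS2019}), so my proposal would be to deduce it as a short corollary of these two results. The only thing to check is that the hypothesis of connectedness in \Cref{thm:NACiffflex} is automatically satisfied in our setting.

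First I would observe that the equivalence (i) $\Leftrightarrow$ (ii) is precisely the content of \Cref{thm:nac}, modulo the trivial remark about the number of vertices: \Cref{thm:nac} is stated for minimally rigid graphs on at least two vertices, and for a single-vertex graph all three properties (i)--(iii) are vacuously handled (or we simply implicitly restrict to $|V(G)| \geq 2$, which is the standard convention when discussing minimal rigidity in this paper).

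Next, for the equivalence (i) $\Leftrightarrow$ (iii), I would apply \Cref{thm:NACiffflex} directly. The one thing to verify is that $G$ is connected, as required by the hypothesis of that theorem. This is immediate from Maxwell's count: a minimally rigid graph $G$ with $|V(G)| \geq 2$ has $|E(G)| = 2|V(G)| - 3$, and since each component of a rigid graph must itself be rigid and hence contribute at least $2|V_i| - 3$ edges (where $V_i$ is the vertex set of the $i$th component), a disconnected graph cannot attain the global count $2|V(G)|-3$ without violating rigidity of some component. Hence $G$ is connected and \Cref{thm:NACiffflex} applies, giving (i) $\Leftrightarrow$ (iii).

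There is no real obstacle here; the only bookkeeping is the connectedness observation above, and perhaps a sentence pointing out that when the three equivalences are chained together, condition (ii) acquires its geometric meaning from (iii) and its colouring-theoretic meaning from (i). I would therefore write the proof as a three-line argument: (i) $\Leftrightarrow$ (ii) by \Cref{thm:nac}, $G$ is connected by the rigidity/edge-count argument, and (i) $\Leftrightarrow$ (iii) by \Cref{thm:NACiffflex}.
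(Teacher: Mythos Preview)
Your proposal is correct and matches the paper's approach exactly: the paper does not even write out a proof for this theorem, simply presenting it with the sentence ``We can summarize \Cref{thm:NACiffflex,thm:nac} as follows.'' Your added remark about connectedness is a reasonable sanity check, though the paper takes it for granted (rigid graphs are well known to be connected).
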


\Cref{thm:mainlaman} shows that we can decide in polynomial time the existence of a NAC-colouring in minimally rigid graphs. By carefully following the proofs of \Cref{thm:stableCutMinusThree} and \Cref{prop:noStableCutThreeHaveNAC}, one can also find a NAC-colouring in polynomial time
when it exists.

\section{Stable cuts in flexible graphs}
\label{sec:flexible}

The crucial tool in the previous section was \Cref{thm:stableCutMinusThree}, the characterisation of graphs on $n$ vertices and $2n-3$ edges that have no stable cuts. The proof of this theorem in \cite{LePf,rauch_2024} uses the following elegant result of Chen and Yu. 

\begin{theorem}[{\cite[Theorem 1]{ChenYu}}]\label{thm:chenyu}
    Let $G$ be a $2$-connected graph with $|E(G)| \leq 2|V(G)|-4$. Then for every vertex $v \in V(G)$, $G$ has a stable cut avoiding $v$.
\end{theorem}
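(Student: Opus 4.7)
The plan is to proceed by induction on $|V(G)|$, with the small base cases ($n \leq 4$) verified directly: when $n = 4$, the only 2-connected graph satisfying $|E| \leq 2n - 4 = 4$ is $C_4$, which admits a stable 2-cut avoiding any prescribed vertex.

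First I would reduce to the case $\delta(G) \geq 3$. Suppose $u$ has degree $2$ with $N(u) = \{x, y\}$. If $xy \in E(G)$, then $G' := G - u$ remains 2-connected and satisfies $|E(G')| \leq 2|V(G')| - 4$, so by induction $G'$ has a stable cut $S$ avoiding $v$ (or avoiding any chosen vertex, in case $v = u$). Since $xy$ is an edge of $G$ and $S$ is independent, at most one of $x, y$ lies in $S$; hence $u$ attaches to at most one component of $G' - S$, and $S$ is again a stable cut of $G$ avoiding $v$. If instead $xy \notin E(G)$ and $v \notin \{x, y\}$, then $\{x, y\}$ is already the required cut. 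The residual subcase, in which $xy \notin E(G)$ and $v \in \{x, y\}$, is more delicate; here I would either pass to a second degree-2 vertex (whose existence can be forced by the edge count when all degree-2 vertices are attached to $v$), or perform a local surgery that adds the edge $xy$ and deletes $u$ before recursing.

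The main case is $\delta(G) \geq 3$. A degree sum argument with $2|E| \leq 4n - 8$ gives $n_3 \geq 8$, where $n_3$ counts the vertices of degree exactly $3$; in particular there is a degree-3 vertex $u \neq v$. Write $N(u) = \{a, b, c\}$ and split on the induced subgraph $G[N(u)]$. If $N(u)$ is independent and $v \notin N(u)$, then $N(u)$ itself is a stable 3-cut avoiding $v$. Otherwise some edge lies inside $\{a, b, c\}$, say $ab \in E(G)$, and I would contract the triangle edge $ua$: this removes the vertex $u$, the edge $ua$, and one of the two parallel edges between the contracted vertex and $b$, producing a 2-connected graph $G'$ on $n - 1$ vertices with at most $2(n-1) - 4$ edges. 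Induction on $G'$ then yields a stable cut which one lifts to $G$ by assigning the contracted vertex to the appropriate side.

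The main obstacle is coordinating the reductions to simultaneously preserve (i) 2-connectivity, (ii) the \emph{tight} sparsity bound $|E| \leq 2|V| - 4$ (not merely the weaker $2|V| - 3$), and (iii) the ability to avoid the prescribed vertex $v$ after lifting the cut. Contraction can in principle destroy 2-connectivity, and the hardest scenarios are those where $v$ lies inside every natural candidate cut — for instance when $v$ itself is a degree-3 vertex whose neighbourhood contains an edge, or when every degree-2 vertex is attached to $v$. Overcoming this would rely on the surplus $n_3 \geq 8$: by choosing from the many degree-3 vertices one whose local structure is disjoint from $v$, the associated reduction can be arranged to produce a stable cut placed safely away from $v$.
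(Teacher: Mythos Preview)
The paper does not prove this statement: \Cref{thm:chenyu} is quoted from Chen and Yu and used only as background. There is thus no in-paper proof to compare your attempt against.

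That said, the paper's \Cref{prop:flexibleHasStableSeparator} strictly contains this result (a graph with $|E|\le 2|V|-4$ is necessarily flexible), and its proof is independent of Chen--Yu. The method there is also inductive via contraction of a triangle edge, but it is organised quite differently from your sketch: there is no reduction to $\delta(G)\ge 3$ and no case split on the structure of $N(u)$. Instead one fixes vertices $u,v$ lying in different rigid components; if $N(u)$ is not stable one contracts one of the two triangle edges at $u$, and rigidity-theoretic tools (vertex splitting, $\mathcal{R}_2$-circuits) are used to certify that for at least one of the two contractions the images of $u$ and $v$ remain in different rigid components. Two-connectivity is never carried through the induction; it is invoked only at the end to extract the ``avoiding $v$'' conclusion from the separation statement.

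Your outline follows the broad shape of Chen--Yu's own argument, and you correctly flag where the difficulties lie, but two of them are genuine gaps as written. First, $G/ua$ need not be $2$-connected: if $\{u,a\}$ happens to be a $2$-cut of $G$ then the contracted vertex is a cut vertex, and the induction hypothesis no longer applies. Second, the residual case ``$N(u)$ independent but $v\in N(u)$'' is not resolved by the inequality $n_3\ge 8$, since nothing bounds the degree of $v$; it is entirely possible that $v$ is adjacent to every degree-$3$ vertex. The original Chen--Yu proof handles both issues with a more refined choice of contraction together with a separate analysis of the $2$-separations that arise; your sketch would need those ingredients before it closes.
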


\Cref{thm:stableCutMinusThree} shows that the bound on the number of edges in \Cref{thm:chenyu} is best possible. In fact, as the number of edges increases, deciding whether there exists a stable cut quickly becomes intractable: it is already NP-complete for the family of graphs with $n$ vertices and at most $(2+\varepsilon)n$ edges, for any positive $\varepsilon$ \cite{LeRanderath}.

To further develop the connection between rigidity and stable cuts, in this section we extend \Cref{thm:chenyu} to the class of flexible graphs. 
We say that a stable cut $S$ \emph{separates} two vertices $u,v$ of $G$ if $u$ and $v$ are in different components of $G - S$.

\begin{theorem}
    \label{prop:flexibleHasStableSeparator}
    Let $G$ be a flexible graph and $u,v \in V(G)$ be such that no rigid component of $G$ contains both $u$ and $v$.
    Then there is a stable cut $S$ of $G$ that separates $u$ and~$v$, and such that every rigid component of $G$ contains at most one vertex of $S$.
    Moreover, if $G$ is $2$-connected, then for every vertex $v \in V(G)$, $G$ has a stable cut avoiding $v$.
\end{theorem}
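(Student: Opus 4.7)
The plan is to derive the moreover statement from the main statement and then prove the main statement by induction, handling the key 2-connected case via a reduction to the \emph{rigidity closure} of $G$.

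First, the moreover statement. Given a 2-connected flexible graph $G$ and a vertex $w$, I would argue there exists $x \in V(G) \setminus \{w\}$ such that no rigid component of $G$ contains both $w$ and $x$. If no such $x$ existed, the rigid components of $G$ containing $w$ would cover $V(G)$; since $G$ is flexible, at least two such components exist, and since distinct rigid components of a flexible graph share at most one vertex, they would meet only in $w$—making $w$ a cut vertex and contradicting 2-connectivity. Applying the main statement to $w$ and $x$ then gives a stable cut separating them, which in particular avoids $w$.

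For the main statement, I would induct on $|V(G)|$. The disconnected case is trivial (take $S = \varnothing$, or recurse on the component containing $u$). If $G$ has a cut vertex $z$ separating $u$ and $v$, then $S = \{z\}$ works; if $z$ does not separate them, $u$ and $v$ lie in a common block $B$ of $G$, and inductive application to $B$ suffices, since rigid components of $G$ contained in $B$ are rigid components of $B$. The essential case is when $G$ is 2-connected and $u, v$ lie in distinct rigid components. Here I would pass to the \emph{rigidity closure} $G^+$, the graph on $V(G)$ obtained by adding an edge between every pair of vertices lying in a common rigid component of $G$. Since each edge of $G$ lies in a rigid component, $G \subseteq G^+$, and the rigid components of $G$ correspond to cliques of $G^+$, any two of which share at most one vertex. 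Stability in $G^+$ is equivalent to stability in $G$ combined with the condition that each rigid component contains at most one vertex of $S$, and separation of $u, v$ in $G^+$ implies separation in $G$ (because $G \subseteq G^+$). Since $u$ and $v$ share no rigid component, they are non-adjacent in $G^+$, so the task reduces to finding a stable cut of $G^+$ separating two non-adjacent vertices.

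The main obstacle is the final combinatorial step: showing that the rigidity closure of a flexible graph admits a stable cut separating any two non-adjacent vertices. This claim fails for arbitrary clique-union graphs—the minimally rigid graph obtained by gluing three triangles cyclically along single vertices admits no stable cut separating two non-hinge vertices in different triangles—so the argument must exploit the flexibility of $G$, equivalently the fact that each clique of $G^+$ is a full rigid component and there are at least two of them. A natural approach is induction on the number of cliques, using hinge vertices (shared between cliques) as separators: in "tree-like" configurations, a single hinge vertex separates $u$ from $v$; in "cyclic" configurations, several hinge vertices lying in pairwise non-intersecting cliques are needed, and the flexibility hypothesis should guarantee that enough such hinges exist to disconnect $u$ from $v$ while remaining mutually non-adjacent.
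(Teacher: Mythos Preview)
Your derivation of the ``moreover'' clause from the main statement is exactly what the paper does, and your passage to the rigidity closure $G^+$ matches the paper's opening move (``we may suppose that all rigid components are complete graphs''). So up to and including the reduction, you are on the same track.

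The genuine gap is precisely where you flag it: the final step. You reduce to showing that a flexible graph $H$ whose rigid components are cliques has a stable cut separating any two non-adjacent vertices, correctly observe that this fails for arbitrary clique-union graphs, and then offer only a speculative outline (``induction on the number of cliques, using hinge vertices''). This is not a proof: you have not said how flexibility enters such an induction, and flexibility is a rigidity-matroid property, not a property of the clique intersection pattern alone. Two graphs can have the same maximal-clique hypergraph while one is rigid and the other flexible, so an induction that only sees the clique structure cannot distinguish them.

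The paper's argument for this step is quite different and genuinely rigidity-theoretic. After passing to $G^+$, it checks whether $N(u)$ is stable; if so, that neighbourhood is the desired cut. If not, $u$ lies in a triangle $\{u,x_1,x_2\}$, and the paper contracts $ux_i$ to obtain $G'_i$. If the contracted vertex and $v$ lie in different rigid components of $G'_i$ for some $i$, induction applies. The heart of the proof is ruling out the remaining case: if both contractions place $u'_i$ and $v$ in a common rigid component, one assembles rigid subgraphs $G_1,G_2$ via vertex-splitting, computes $r(G_1\cup G_2)=2|V|-4$, and then finds an $\mathcal{R}_2$-circuit through $x_1x_2$ whose minimum-degree-three property forces a $K_4$ on $\{u,x_1,x_2,y\}$ inside $G^+$, contradicting an earlier claim about common neighbours. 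None of this machinery (vertex splits, rank computations, $\mathcal{R}_2$-circuits) appears in your outline, and some substitute for it is needed to close the argument.
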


\begin{corollary}\label{cor:flexiblestablecut}
    Every flexible graph has a stable cut.
\end{corollary}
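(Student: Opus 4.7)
The corollary should drop out of \Cref{prop:flexibleHasStableSeparator} after a short case analysis on the connectivity of $G$, so my plan is to set up the cases cleanly and invoke the proposition only in the nontrivial one. Let $G$ be a flexible graph. I would first dispose of the cases where $G$ admits a ``cheap'' stable cut. If $G$ is disconnected, then the empty set is vacuously a stable set and $G - \varnothing = G$ is disconnected, so $\varnothing$ is a stable cut. If $G$ is connected but not $2$-connected, then $G$ has a cut vertex $v$, and the singleton $\{v\}$ is trivially a stable set whose removal disconnects $G$, hence a stable cut.

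In the remaining case, $G$ is $2$-connected (and flexible), so I would invoke the ``moreover'' part of \Cref{prop:flexibleHasStableSeparator}: choosing any vertex $v \in V(G)$, this gives a stable cut of $G$ avoiding $v$, which is in particular a stable cut of $G$.

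The only subtle point is to confirm that the three cases really cover everything without needing any hypothesis that the proposition does not already supply. In particular, I do not need to produce a pair $u,v$ of vertices lying in distinct rigid components explicitly for the main statement, because the ``moreover'' clause of \Cref{prop:flexibleHasStableSeparator} is stated for all $2$-connected flexible graphs without such a selection. Therefore I do not expect a genuine obstacle here; the entire argument is a two- or three-line case split, and essentially all the work has been done by \Cref{prop:flexibleHasStableSeparator}.
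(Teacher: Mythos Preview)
Your proposal is correct and is exactly the routine case split the paper intends: the corollary is stated immediately after \Cref{prop:flexibleHasStableSeparator} with no explicit proof, and the disconnected / cut-vertex / $2$-connected trichotomy you give is precisely how one extracts it from the theorem's ``moreover'' clause. (The paper also supplies a genuinely different alternative proof at the end of the section, via spherical rigidity and NAP-colourings, but that is explicitly flagged as a second, independent route.)
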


At the end of the section, we give an independent proof for \Cref{cor:flexiblestablecut}, which builds on the relationship between Euclidean and spherical rigidity.

We remark that flexible graphs on $n$ vertices can be dense. Indeed, they can have average degree close to $n$. 
It is easy to see that the higher-dimensional analogue of \Cref{cor:flexiblestablecut} is false.  For example, the $3$-prism is one of many examples that is $3$-flexible, but has no stable cut.

To prove \Cref{prop:flexibleHasStableSeparator}, we will need the following well-known result from rigidity theory. A \emph{vertex split} of a graph $G$ is the following operation: 
choose $v\in V(G)$ and a pair of sets $N_1,N_2$ such that $N_1\cup N_2=N_G(v)$ and $|N_1\cap N_2|=1$, delete $v$ from $G$ and add two new vertices $v_1,v_2$ joined to $N_1,N_2$, respectively, and finally, add the edge $v_1v_2$.

\begin{lemma}[{\cite[Proposition 10]{Wsplit}}]\label{lem:vsplit}
    Let $G$ be a minimally rigid graph and let $G'$ be obtained from $G$ by a vertex split. Then $G'$ is minimally rigid.
\end{lemma}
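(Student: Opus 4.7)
The plan is to verify that $G'$ satisfies the Pollaczek--Geiringer/Laman sparsity conditions, namely, $|E(G')| = 2|V(G')| - 3$ together with $|E(H)| \leq 2|V(H)| - 3$ for every subgraph $H \subseteq G'$ with $|V(H)| \geq 2$. Since $G$ is minimally rigid, these hold for $G$ by the classical characterisation cited in the preliminaries, so the task is to transfer them to $G'$.

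First, I would check the global count. The vertex split replaces $v$, which is incident with $|N_1| + |N_2| - 1$ edges in $G$, by $v_1$ and $v_2$ incident with $|N_1|$ and $|N_2|$ edges respectively, and adds the new edge $v_1 v_2$. Hence $|V(G')| = |V(G)| + 1$ and $|E(G')| = |E(G)| + 2$, so the equation $|E| = 2|V| - 3$ is preserved.

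For the hereditary condition, given any subgraph $H' \subseteq G'$ with $|V(H')| \geq 2$, the strategy is to contract $v_1$ and $v_2$ back to a single vertex $v$, obtaining a subgraph $H$ of $G$, and then to invoke the sparsity bound on $H$. Three cases arise according to $|V(H') \cap \{v_1, v_2\}|$. If this intersection has size at most one, relabelling the present $v_i$ (if any) as $v$ yields $H \subseteq G$ with $|V(H)| = |V(H')|$ and $|E(H)| = |E(H')|$, and the desired bound for $H'$ is immediate. If instead $\{v_1, v_2\} \subseteq V(H')$, the contraction produces $|V(H)| = |V(H')| - 1$ and $|E(H)| \geq |E(H')| - 2$: the edge $v_1 v_2$ (when present in $H'$) becomes a loop to be deleted, and at most one pair of parallel edges can arise, namely $v_1 w, v_2 w$ through the unique common neighbour $w$. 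Applying Laman's bound to $H$ then gives $|E(H')| \leq |E(H)| + 2 \leq 2|V(H)| - 1 = 2|V(H')| - 3$.

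The main obstacle is really only the careful bookkeeping in the contraction case, where the bound $|E(H)| \geq |E(H')| - 2$ relies essentially on the hypothesis $|N_1 \cap N_2| = 1$ in the definition of a vertex split: a larger intersection would allow multiple parallel-edge identifications and the count would no longer go through. A small boundary point to verify separately is the subcase $|V(H)| = 1$ (equivalently, $|V(H')| = 2$ with $V(H') = \{v_1, v_2\}$), where Laman's bound does not directly apply; here the claim reduces to the trivial observation that a simple graph on two vertices has at most one edge.
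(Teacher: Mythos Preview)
Your proof via the Pollaczek--Geiringer/Laman sparsity characterisation is correct: the global edge count goes through, the contraction bookkeeping in the case $\{v_1,v_2\}\subseteq V(H')$ is right (at most one loop and at most one parallel pair can be lost, precisely because $|N_1\cap N_2|=1$), and you correctly flag and dispose of the boundary case $V(H')=\{v_1,v_2\}$.

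The paper does not actually prove this lemma; it is stated as a citation of Whiteley's result on vertex splitting, with no in-paper argument to compare against. Whiteley's original proof proceeds via infinitesimal rigidity (constructing an infinitesimally rigid realisation of $G'$ from one of $G$) rather than the combinatorial sparsity count, which has the advantage of working in all dimensions. Your Laman-based route is the standard combinatorial alternative and is fully adequate for the $2$-dimensional setting used in this paper.
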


We will also need some additional concepts from combinatorial rigidity.
The \emph{rank} of a graph $G$ on at least two vertices, denoted by $r(G)$, is $2|V(G)| - 3 - k$, where $k$ is the minimum number of edge additions needed to make $G$ rigid. An \emph{$\mathcal{R}_2$-circuit} is a graph~$C$ on at least two vertices for which $r(C) = r(C-e) = |E(C)|-1$ for all $e \in E(C)$. It is well-known that $\mathcal{R}_2$-circuits are rigid and have minimum degree three, see e.g.~\cite[Section 3.1]{jordan_2016}. Also, for a graph $G$ and a non-adjacent pair of vertices $x,y \in V(G)$, we have $r(G) = r(G+xy)$ if and only if there is an $\mathcal{R}_2$-circuit in $G+xy$ which contains the edge $xy$. 

\begin{proof}[Proof of \Cref{prop:flexibleHasStableSeparator}]
    We prove the statement by induction on $|V(G)|$.
    We may suppose that all rigid components are complete graphs. (When a rigid component is not complete, we add the missing edges; a stable cut in this new graph is also a stable cut in the original graph.)
    Under this assumption, it suffices to show that there is a stable cut separating $u$ and~$v$, since no stable cut can contain two vertices of a rigid component of $G$.
    
    The statement is vacuously true when $|V(G)| = 1$,
    and hence we may suppose that $|V(G)| \geq 2$.
    If the neighbourhood of $u$ is stable, then we take $S$ to be this neighbourhood.
    The hypothesis that $u$ and $v$ are not in the same rigid component implies that $uv$ is not an edge of $G$, and hence $S$ separates $u$ and $v$.
    
    Thus we may suppose that the neighbourhood of $u$ is not stable, or equivalently, that $u$ is in a $3$-cycle $\{u,x_1,x_2\}$.
    For $i \in \{1,2\}$, let $G'_i$ be the graph obtained from $G$ by contracting the edge~$ux_i$ to a vertex $u'_i$. If $u'_i$ and $v$ are in different rigid components of $G'_i$, then by the induction hypothesis, there is a stable cut of $G'_i$ separating $u'_i$ and $v$. This stable cut separates $u$ and $v$ in $G$ as well.

    Suppose, for a contradiction, that $u'_i$ and $v$ are contained in a rigid component of $G'_i$ for both $i \in \{1,2\}$. This means that there is a subset $X'_i \subseteq V(G'_i)$ with $u'_i,v \in X'_i$ and such that $G'_i[X'_i]$ is rigid. Let $X_i \subseteq V(G)$ be the vertex set obtained from $X'_i$ by deleting $u'_i$ and adding $u$ and~$x_i$. Also, let $G_i = G[X_i]$. Note that $G_i$ is flexible, since it contains both $u$ and $v$.
    
    \begin{claim}\label{claim:commonneighbours}
        We have $N_G(u) \cap N_G(x_1) \cap X_1 = \varnothing$ and $N_G(u) \cap N_G(x_2) \cap X_2 = \varnothing$.
    \end{claim}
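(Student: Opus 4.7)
The plan is to establish the two halves of the claim by symmetry, so I focus on proving $N_G(u) \cap N_G(x_1) \cap X_1 = \varnothing$. I proceed by contradiction: suppose there exists some $w$ in the intersection. The goal is to show that $G_1 = G[X_1]$ is then rigid, which contradicts the flexibility of $G_1$ already noted in the main argument (since $u,v \in V(G_1)$ do not lie in a common rigid component of $G$).

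The key observation is that $H_1 := G'_1[X'_1]$ is identical to the graph obtained from $G_1$ by contracting the edge $ux_1$: the contracted vertex $u'_1$ has neighborhood $(N_G(u) \cup N_G(x_1)) \cap (X_1 \setminus \{u, x_1\})$ in both descriptions, and the remaining edges coincide by construction of $G'_1$. In particular, $u'_1 w \in E(H_1)$, and $H_1$ is rigid by the case hypothesis. The natural plan is now to invert this contraction via a vertex split at $u'_1$, using $w$ as the unique common neighbor. Since \Cref{lem:vsplit} is stated only for minimally rigid graphs, I would first choose a minimally rigid spanning subgraph $M$ of $H_1$ containing the edge $u'_1 w$. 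Such an $M$ exists because the generic $2$-rigidity matroid has no loops (every single edge is independent), so every edge of a rigid graph lies in some base of the rigidity matroid restricted to its edge set.

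With $M$ fixed, I split $u'_1$ as follows: each $z \in N_M(u'_1)$ is sent to $u$ if $z \in N_G(u) \setminus N_G(x_1)$, to $x_1$ if $z \in N_G(x_1) \setminus N_G(u)$, and to both sides if $z = w$; any remaining common $G$-neighbor of $u$ and $x_1$ that happens to lie in $N_M(u'_1)$ is placed arbitrarily on just one side. Then $|N_1 \cap N_2| = 1$, so this is a legitimate vertex split, and by \Cref{lem:vsplit} the resulting graph $M^*$ is minimally rigid. By the assignment rule, every edge of $M^*$ is also an edge of $G_1$, and $V(M^*) = X_1$, so $M^*$ is a spanning minimally rigid subgraph of $G_1$. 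Therefore $G_1$ is rigid, contradicting its flexibility. The second equation follows by the same argument with the roles of $x_1$ and $x_2$ interchanged (and $X_2, X'_2, G'_2[X'_2]$ in place of $X_1, X'_1, H_1$).

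The main delicate point is the matroidal selection of $M$ through the prescribed edge $u'_1 w$: without this control the vertex split need not use $w$ as the common neighbor, and the edges of the resulting graph could fail to lie in $G_1$. Once this step is in place, the rest is a direct application of the vertex split operation and preservation of rigidity.
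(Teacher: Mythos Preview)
Your proof is correct and follows essentially the same approach as the paper: both argue by contradiction that $G_i$ arises from the rigid graph $G'_i[X'_i]$ via a vertex split (plus possibly extra edges), forcing $G_i$ to be rigid. The paper is terser, directly applying vertex-splitting to the rigid graph $G'_i[X'_i]$, whereas you carefully pass to a minimally rigid spanning subgraph $M$ containing $u'_1w$ so that \Cref{lem:vsplit} applies verbatim---this extra care is warranted given how the lemma is stated, and your matroidal justification for the existence of such an $M$ is exactly the right way to handle it.
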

    
    \begin{proof}
        Fix $i \in \{1,2\}$ and suppose, for a contradiction, that there is a vertex $y \in X_i$ that is a common neighbour of $u$ and $x_i$. It follows that $G_i$ can be obtained from the rigid graph $G'_i[X'_i]$ by a vertex split operation, potentially followed by some edge additions. Hence $G_i$ is rigid, a contradiction. 
    \end{proof}

    Let $H = G_1 \cup G_2$.  
    
    \begin{claim}\label{claim:rankofH}
        We have $r(H) = 2|V(H)| - 4$.
    \end{claim}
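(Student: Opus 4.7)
The plan is to establish $r(H) = 2|V(H)|-4$ via matching upper and lower bounds, both obtained by analysing $H + uv$. First, the vertices $u, x_1, v$ are pairwise distinct: $u \neq x_1$ as they are joined by an edge, and $u \neq v$, $v \neq x_1$, since otherwise the rigid components $\{u,v\}$ or $\{u, x_1\}$ of $G$ would contradict the hypothesis. In particular, $|V(H)| \geq 3$.

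For the upper bound, observe that any rigid subgraph of $H$ is a rigid subgraph of $G$, and hence is contained in a rigid component of $G$. By hypothesis no such component contains both $u$ and $v$, so $H$ is not rigid and $r(H) \leq 2|V(H)|-4$.

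For the lower bound, I would show that $G_i + uv$ is rigid for each $i \in \{1,2\}$; then by the classical gluing lemma (applied to $G_1 + uv$ and $G_2 + uv$, which share the two vertices $u$ and $v$), $H + uv$ is rigid, yielding $r(H) \geq r(H + uv) - 1 = 2|V(H)|-4$. To prove $G_i + uv$ is rigid, I would first compute $r(G_i) = 2|V(G_i)|-4$: the upper bound $r(G_i) \leq 2|V(G_i)|-4$ follows by the argument of the previous paragraph applied to $G_i$, while for the lower bound I would use a vertex split. Choose $y \in N_{G_i}(x_i) \setminus \{u\}$; this set is non-empty, since otherwise $G_i - x_i$ would be isomorphic to the rigid graph $G'_i[X'_i]$, producing a rigid subgraph of $G$ containing both $u$ and $v$, a contradiction. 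By \Cref{claim:commonneighbours}, the vertex $y$ is the unique common neighbour of $u$ and $x_i$ in $G_i + uy$, so $G_i + uy$ is obtained from $G'_i[X'_i]$ by the vertex split at $u'_i$ with parts $N_1 = (N_{G_i}(u) \setminus \{x_i\}) \cup \{y\}$ and $N_2 = N_{G_i}(x_i) \setminus \{u\}$. The edge $u'_i y$ belongs to $E(G'_i[X'_i])$ (because $x_i y \in E(G_i)$ contracts to it) and, being a single edge, is independent in the rigidity matroid; hence $\{u'_i y\}$ can be extended to a minimally rigid spanning subgraph $M$ of $G'_i[X'_i]$. By \Cref{lem:vsplit}, the corresponding vertex split of $M$ is minimally rigid, and since it is a spanning subgraph of $G_i + uy$, this shows $G_i + uy$ is rigid. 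Therefore $r(G_i) \geq r(G_i + uy) - 1 = 2|V(G_i)|-4$. Finally, since $u$ and $v$ lie in distinct rigid components of $G_i$ (as they do in $G$), the edge $uv$ is independent from $E(G_i)$ in the rigidity matroid, so $r(G_i + uv) = r(G_i) + 1 = 2|V(G_i)|-3$, meaning $G_i + uv$ is rigid.

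The main obstacle is verifying that the vertex split producing $G_i + uy$ from $G'_i[X'_i]$ lifts to a vertex split of some minimally rigid spanning subgraph $M$ to which \Cref{lem:vsplit} applies. The two technical points are that \Cref{claim:commonneighbours} guarantees the partition $N_1, N_2$ satisfies $N_1 \cap N_2 = \{y\}$, and that independence of the single edge $u'_i y$ in the rigidity matroid allows $M$ to be chosen containing it, so that the induced split of $M$ is indeed contained in $G_i + uy$.
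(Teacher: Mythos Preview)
Your proof is correct and follows the same approach as the paper: show that each $G_i + uv$ is rigid via a vertex-split argument (using \Cref{claim:commonneighbours} to control common neighbours and the existence of $y$), glue $G_1+uv$ and $G_2+uv$ along $\{u,v\}$ to deduce that $H+uv$ is rigid, and combine with the fact that $H$ itself is not rigid. The paper's write-up is terser --- it simply says ``by the same reasoning as in the proof of \Cref{claim:commonneighbours}, $G_i+uy$ is rigid'' --- whereas you take the extra care of passing to a minimally rigid spanning subgraph $M$ containing $u'_iy$ before invoking \Cref{lem:vsplit}, which is indeed the technically correct way to apply that lemma as stated.
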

    
    \begin{proof}
        Fix $i \in \{1,2\}$. First, note that there is a vertex $y \in N_G(x_i) - N_G(u)$ in $X_i$, since otherwise the rigid graph $G'_i[X'_i]$ would be isomorphic to a subgraph of $G$ that contains both $u$ and $v$. By the same reasoning as in the proof of \Cref{claim:commonneighbours}, we deduce that $G_i + uy$ is rigid, and hence $r(G_i) = 2|V(G_i)| - 4$. Since $\{u,v\}$ is not contained in a rigid subgraph of $G$, we have $r(G_i + uv) = r(G_i) + 1 = 2|V(G_i)| - 3$, that is, $G_i + uv$ is rigid. It follows that $H + uv$ is also rigid, since it can be obtained by gluing two rigid graphs along at least two vertices. Finally, since $H$ is not rigid (as it contains both $u$ and $v$), we have $r(H) = r(H+uv) - 1 = 2|V(H)| - 4$.
    \end{proof}

    If $H+x_1x_2$ is rigid, then $u$ and $v$ are contained in a rigid subgraph of $G$, a contradiction.
    Thus we may assume that $H+x_1x_2$ is flexible. It follows from \Cref{claim:rankofH} that $r(H) = r(H+x_1x_2) = 2|V(H)| - 4$. Hence 
    there is an $\mathcal{R}_2$-circuit $C \subseteq H+x_1x_2$ with $x_1x_2 \in E(C)$. Since $\mathcal{R}_2$-circuits have minimum degree three, there exists a vertex $y \in N_C(x_1) \setminus \{u,x_2\}$. Since $\mathcal{R}_2$-circuits are 
    rigid, we deduce that $u,x_1,x_2$, and $y$ are all in the same rigid component of $G$. It follows from our assumption on $G$ that $\{u,x_1,x_2,y\}$ induces a complete subgraph of $G$, contradicting \Cref{claim:commonneighbours}. This final contradiction finishes the proof of the first part of the theorem. 

        For the second statement, it suffices to prove that if $G$ is $2$-connected, then for all $v \in V(G)$,
    there exists $u\in V(G)$ such that no rigid component contains both $u$ and $v$. Suppose not; then $v$ is contained in every rigid component of~$G$.
    Let $R_1, \ldots, R_\ell$ be the rigid components of $G$.
    Since $G$ is flexible, $\ell\geq 2$.  As noted previously, distinct rigid components can only intersect in at most one vertex, and thus $V(R_i) \cap V(R_j)=\{v\}$ for all $i<j$.  Hence, $v$ is a cut vertex of $G$, which contradicts the $2$-connectivity of $G$.
    \end{proof}

Let us highlight that the previous proof is algorithmic, in the following sense.

\begin{proposition}\label{thm:flexible_stable_poly}
    Given a flexible graph $G$, a stable cut, and hence also a NAC-colouring, of~$G$ can be found in time $O(|V(G)|^3)$.
\end{proposition}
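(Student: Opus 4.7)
The plan is to convert the inductive proof of \Cref{prop:flexibleHasStableSeparator} directly into a recursive algorithm and show that each level of recursion can be executed in time $O(n^2)$, where $n=|V(G)|$. Combined with a recursion depth of at most $n$, this yields the claimed $O(n^3)$ bound. First I would compute the rigid components of $G$ in time $O(n^2)$ using the standard pebble game algorithm for the $2$-dimensional rigidity matroid; since $G$ is flexible, at least two rigid components exist, and one may pick vertices $u$ and $v$ in distinct ones. As in the proof, I would then add every missing edge inside each rigid component, in time $O(n^2)$, using the fact that a stable cut of the augmented graph is also a stable cut of $G$.

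The recursive procedure on input $(G, u, v)$ proceeds as follows. First, check in time $O(n^2)$ whether $N_G(u)$ is stable by inspecting all pairs of neighbours of $u$; if it is, return $N_G(u)$. Otherwise, pick any triangle $\{u, x_1, x_2\}$ containing $u$ (which exists because $N_G(u)$ is not stable), and for each $i \in \{1,2\}$ build the contracted graph $G'_i$ obtained by merging $u$ and $x_i$ into a single vertex $u'_i$, recompute and clique-complete its rigid components, and test whether $u'_i$ and $v$ now lie in different rigid components. The proof of \Cref{prop:flexibleHasStableSeparator} guarantees that at least one such $i$ works, and we recurse on $(G'_i, u'_i, v)$. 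Any stable cut $S$ returned by the recursive call pulls back to a stable cut of $G$ separating $u$ and $v$: $S$ avoids $u'_i$, and contracting the edge $ux_i$ creates no new adjacencies among vertices of $V(G) \setminus \{u, x_i\}$, so $S$ is still stable in $G$, while $u$ and $x_i$ lie in the same component of $G-S$, corresponding to the component of $u'_i$ in $G'_i - S$.

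The per-level work is dominated by the $O(n^2)$ computation of rigid components and by the $O(n^2)$ operations for contraction, triangle search, and neighbourhood scanning, so the total cost is $O(n \cdot n^2) = O(n^3)$. Finally, to derive a NAC-colouring from a stable cut $S$, we apply the construction behind \Cref{thm:stableCutImpliesNAC}: fix a single connected component $A$ of $G-S$ and colour red every edge with at least one endpoint in $A$, blue all remaining edges; the result is a well-defined NAC-colouring because $S$ is stable (so no edges lie inside $S$), and it can be produced in time $O(n^2)$. The main point requiring care is the $O(n^2)$ cost of recomputing rigid components at each recursive step; this relies on the efficiency of pebble game based rigidity algorithms, which is standard in the rigidity theory literature.
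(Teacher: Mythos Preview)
Your proposal is correct and follows essentially the same approach as the paper: convert the inductive proof of \Cref{prop:flexibleHasStableSeparator} into a recursive algorithm with linear recursion depth and $O(n^2)$ work per level (dominated by the pebble-game computation of rigid components), yielding $O(n^3)$ overall. You are in fact slightly more careful than the paper's exposition in two respects: you make explicit the clique-completion of rigid components at each level (which the paper invokes in the proof of \Cref{prop:flexibleHasStableSeparator} but does not restate in the algorithmic analysis), and you spell out why the stable cut returned by the recursive call pulls back to a stable cut of $G$.
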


\begin{proof}
    Let $n = |V(G)|$ and $m=|E(G)|$. If $G$ is not connected, then the empty set is a stable cut.
    It is well-known that rigid components can be found in $O(n^2)$ time, see, e.g.~\cite{LeeStreinu}.
    Hence we can also find in $O(n^2)$ time a pair $u,v \in V(G)$ of vertices such that no rigid component contains both $u$ and $v$.
    \Cref{alg:stableCutFlexible} below describes how to find a stable cut of a connected flexible graph~$G$ separating $u$ and $v$.
    It follows the proof of \Cref{prop:flexibleHasStableSeparator}.
    The algorithm is recursive, but there is no branching: each iteration calls the algorithm at most once, on a graph
    that has at least one vertex fewer and at least two edges fewer.
    Hence, the total number of calls is at most $n$.
    Checking whether the neighbourhood of a vertex is stable can be done in $O(m+n^2)$ time.
    The edge contraction takes $O(m)$ operations.
    Checking whether a given vertex is a cut and the construction of the graphs $G_1$ and $G_2$ takes $O(m)$ steps.
    Hence in total, the running time of \Cref{alg:stableCutFlexible} is bounded by $O(n(m+n^2))=O(n^3)$.
\end{proof}

\begin{algorithm}[ht]
    \caption{\textsc{Stable cut of a connected flexible graph}}
    \label{alg:stableCutFlexible}
    \begin{algorithmic}[1]
        \Require a connected flexible graph $G$, vertices $u$ and $v$ not in the same rigid component of $G$
        \Ensure a stable cut $S$ of $G$ such that $u$ and $v$ are separated by $S$
        \If{the neighbourhood of $u$ is stable}
        \State\Return the neighbourhood of $u$
        \Else
        \State $x_1,x_2 :={}$ neighbours of $u$ such that $(u,x_1,x_2)$  is a $3$-cycle
        \For{$i\in\{1,2\}$}
            \State $G'_i :={}$ the graph obtained from $G$ by contracting the edge $ux_i$
            \State $u'_i :={}$ the vertex of $G'_i$ corresponding to the contracted edge $ux_i$
        \EndFor 
        \If{$u'_1$ and $v$ are in different rigid components of $G'_1$}
        \State\Return a stable cut of $G'_1$ separating $u'_1$ and $v$
        \Else
        \State\Return a stable cut of $G'_2$ separating $u'_2$ and $v$
        \EndIf
        \EndIf
    \end{algorithmic}
\end{algorithm}

The algorithm is implemented in the package \textsc{PyRigi}~\cite{pyrigi} as method \texttt{stable\_separating\_set} thanks to Petr Laštovička.
To close this section, we give an alternative proof of \Cref{cor:flexiblestablecut}. While this proof is not as elementary as the one above, it highlights the connection between stable cuts and rigidity on the sphere. We start by briefly explaining what we mean by the latter.

Analogously to realisations in $\RR^2$,
we may consider realisations on the sphere. Let \[\mathcal{S}=\{(x,y,z)\in \RR^3: x^2+y^2+z^2=1\}\] denote the unit sphere in $\RR^3$.
A realisation $(G,p)$ in $\RR^3$ is a \emph{realisation on the sphere} if $p(v)\in \mathcal{S}$ for all $v\in V(G)$. A realisation $(G,p)$ on the sphere is \emph{$\mathcal{S}$-rigid} if the only edge-length-preserving continuous deformations of $(G,p)$ that keep the points on the sphere are the ones arising from isometries of $\mathcal{S}$; otherwise, $(G,p)$ is \emph{$\mathcal{S}$-flexible}. The graph $G$ is \emph{$\mathcal{S}$-rigid} if its generic realisations on the sphere are all $\mathcal{S}$-rigid, and \emph{$\mathcal{S}$-flexible} otherwise. (Here, ``generic'' should be understood as generic among realisations on the sphere; of course, a realisation that is generic in the usual sense cannot lie on 
the sphere.)

The following folklore result links rigidity in the plane to rigidity on the sphere.

\begin{theorem}[See, e.g., \cite{Coning,EJNSTW}]\label{lem:planesphere}
    A graph is rigid if and only if it is $\mathcal{S}$-rigid.    
\end{theorem}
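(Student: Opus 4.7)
My plan is to prove the equivalence via the coning construction from combinatorial rigidity theory. For a graph $G$, let $G \ast w$ denote the graph obtained by adding a new vertex $w$ adjacent to every vertex of $G$. The coning theorem asserts that $G$ is rigid in $\mathbb{R}^2$ if and only if $G \ast w$ is rigid in $\mathbb{R}^3$; thus it suffices to show that $G$ is $\mathcal{S}$-rigid if and only if $G \ast w$ is rigid in $\mathbb{R}^3$.

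I would establish this second equivalence by setting up an explicit correspondence between motions on $\mathcal{S}$ and motions of the cone in $\mathbb{R}^3$. Given $(G, p)$ on $\mathcal{S}$, form the realisation $(G \ast w, \tilde p)$ in $\mathbb{R}^3$ with $\tilde p(w) = 0$ and $\tilde p = p$ on $V(G)$; every edge $wv$ then has length $1$. By post-composing with a time-dependent isometry of $\mathbb{R}^3$, any continuous motion of $(G \ast w, \tilde p)$ can be taken to fix $w$ at the origin. Such a pinned motion constrains each remaining vertex to stay at distance $1$ from the origin, and therefore gives a continuous motion of $(G, p)$ on $\mathcal{S}$. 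Conversely, any motion on $\mathcal{S}$ yields a pinned motion of $(G \ast w, \tilde p)$. The trivial motions on both sides correspond to rotations about the origin, so the correspondence descends to the quotient by the respective isometry groups.

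To promote this to a statement about generic realisations, I would compare the rigidity matrices. The spherical rigidity matrix of $(G, p)$ has one row per edge of $G$ and acts on the $2|V(G)|$-dimensional tangent space of $\mathcal{S}^{V(G)}$ at $p$. The Euclidean rigidity matrix of $(G \ast w, \tilde p)$ has $|V(G)|$ extra rows coming from the cone edges, whose combined effect, together with pinning $w$ at the origin, is precisely to kill the infinitesimal motions leaving $\mathcal{S}$. Hence the two ranks differ only by a known constant determined by the Euclidean and spherical isometry groups. Choosing $p$ to be algebraically independent among points of $\mathcal{S}$ realises the maximum rank of the spherical matrix, matching the generic $3$-rank of $G \ast w$.

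The main obstacle is this last step: the cone realisation $(G \ast w, \tilde p)$ is never generic in $\mathbb{R}^3$, as it lies on the measure-zero locus where $\tilde p(w)=0$ and $\|p(v)\|=1$ for all $v \in V(G)$, so the definition of generic $3$-rigidity does not apply to it directly. I would resolve this using the standard fact that the rank of a polynomial matrix attains its generic value on a Zariski-dense open subset of any irreducible subvariety of its domain; applied to the subvariety parametrising cone-of-sphere realisations, this shows that a spherically generic $p$ indeed witnesses the generic $3$-rank of $G \ast w$, closing the argument.
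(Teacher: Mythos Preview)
The paper does not give its own proof of this statement: it is quoted as a known folklore result with references to \cite{Coning,EJNSTW}, and no argument is supplied. So there is nothing in the paper to compare your proposal against line by line.

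That said, your outline is precisely the standard route taken in the cited literature (the reference \cite{Coning} is Whiteley's coning paper). Reducing $\mathcal{S}$-rigidity of $G$ to Euclidean $3$-rigidity of the cone $G\ast w$ by placing the apex at the origin, identifying the cone-edge constraints with the condition $\|p(v)\|=1$, and matching the isometry groups is exactly how this equivalence is usually established. Your identification of the genericity issue---that $(G\ast w,\tilde p)$ is never generic in $\mathbb{R}^3$---and your resolution via the semicontinuity of rank on the irreducible variety of cone-over-sphere configurations are also the standard moves. The sketch is sound; the only places where more care would be needed in a full write-up are the explicit dimension count relating the kernels of the two rigidity matrices (the spherical isometry group $\mathrm{SO}(3)$ contributes a $3$-dimensional space of trivial infinitesimal motions, versus $6$ for Euclidean $3$-space, and pinning the apex removes exactly the difference) and verifying that a spherically generic $p$ indeed gives a smooth point of the relevant variety so that the rank argument applies.
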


\Cref{thm:NACiffflex} turns out to have a spherical analogue, based on the following notion. Given a graph $G$ and an edge colouring $c \colon E(G) \rightarrow \{ \red, \blue \}$, let us say that a path $(u_1,u_2,u_3,u_4)$ of length $3$ is \emph{alternating} if $c(u_1u_2)\neq c(u_2u_3)\neq c(u_3u_4)$.

\begin{definition}
    Let $G$ be a graph. An edge colouring $c \colon E(G) \rightarrow \{ 
    \red, \blue \}$ is a \emph{NAP-colouring} if it is surjective,
    every $3$-cycle in~$G$
    is monochromatic, and there are no alternating paths of length $3$ in~$G$. 
\end{definition}
NAP stands for ``No Alternating Path''.
Notice that a NAP-colouring is also a NAC-colouring. Also note that a surjective edge colouring with two colours is a NAP-colouring if and only if every edge $uv$ has an endvertex $v$ such that all edges incident to $v$ have the same colour. 

\begin{theorem}[{\cite[Theorem 3.14]{GGLS2021}}]
    \label{thm:NAP}
    A connected graph has a quasi-injective $\mathcal{S}$-flexible realisation if and only if it has a NAP-colouring.
\end{theorem}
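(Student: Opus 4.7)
The plan is to prove both directions.

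For the direction that NAP implies $\mathcal{S}$-flexibility, I would first unpack the structure of a NAP-colouring. Using the equivalent characterisation stated in the excerpt (every edge has a monochromatic endpoint), partition $V(G)$ into the set $A$ of vertices incident only to red edges, the set $B$ of vertices incident only to blue edges, and the set $M$ of remaining \emph{mixed} vertices. Then $M$ must be a stable set, and there is no edge between $A$ and $B$, since any such edge would have to be simultaneously red and blue. Since $G$ is connected and the colouring is surjective, all three of $A$, $B$, $M$ are nonempty.

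Next I would construct an explicit flexible realisation on $\mathcal{S}$. Let $N = (0,0,1)$ be the north pole. Place every vertex of $M$ at $N$, the vertices of $A$ at distinct points on the equator, and the vertices of $B$ at distinct points on some other latitude circle. For parameters $s, t \in \RR$, define $p_{s,t}$ by rotating $A$ by angle $s$ and $B$ by angle $t$ about the vertical axis, while keeping $M$ (which lies on this axis) fixed. Every edge lies within $A$, within $B$, between $A$ and $M$, or between $B$ and $M$; in each case its length is preserved, since rotation about the vertical axis is a Euclidean isometry and the distance from a point at a given latitude to the pole is rotation-invariant. Quasi-injectivity follows from the stability of $M$ and the injective placement within $A$ and within $B$. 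The motion is non-trivial for $s \ne t$, since any orientation-preserving isometry of $\mathcal{S}$ fixing both $N$ and a point on the equator must be the identity.

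For the direction that $\mathcal{S}$-flexibility implies NAP, I would derive a colouring from the motion. Given a non-trivial continuous motion $p_t$, consider the infinitesimal flex $q(v) = \left.\frac{d}{dt}\right|_{t=0} p_t(v)$. Modulo the three-dimensional space of trivial infinitesimal flexes coming from $SO(3)$, my plan is to identify two distinct axes $\omega^R, \omega^B \in \RR^3$ such that the motion of each vertex is attributable to a rotation about one of these axes, and then colour each edge red or blue according to which rotation preserves its length. The NAP axioms should then follow: monochromatic 3-cycles correspond to triangles lying entirely in a single rotating part, and the absence of alternating 3-paths prevents the two rotations from being forced to coincide at shared vertices.

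The main obstacle is the forward direction, where both producing the canonical two-axis decomposition and verifying that the resulting colouring has no alternating 3-paths require care. The reverse direction is essentially routine once the partition into $A$, $B$, $M$ is in hand.
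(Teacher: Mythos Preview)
The paper does not prove \Cref{thm:NAP}; it is quoted from \cite{GGLS2021} and used as a black box in the alternative proof of \Cref{cor:flexiblestablecut}. There is therefore no proof in this paper against which to compare your attempt.

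That said, your reverse direction (NAP-colouring $\Rightarrow$ flexible spherical realisation) is correct and is essentially the construction in \cite{GGLS2021}: the partition of $V(G)$ into $A$, $B$, and the stable set $M$ is exactly what falls out of \Cref{lem:NAPiffStableCut}, and placing $M$ at a pole while rotating $A$ and $B$ independently about the polar axis gives the required motion.

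Your forward direction, however, has a genuine gap. There is no reason an arbitrary non-trivial spherical flex should decompose, even infinitesimally, into rotations about just two fixed axes $\omega^R,\omega^B$; that ansatz is essentially assuming the structure you are trying to extract. A generic motion of a flexible spherical framework can be far more complicated, and an infinitesimal snapshot carries no canonical two-axis data. The argument in \cite{GGLS2021}, like the planar argument for \Cref{thm:NACiffflex} in \cite{GLS2019}, proceeds algebraically rather than infinitesimally: one treats the space of equivalent realisations as an algebraic curve, passes to its function field, and reads off the edge colouring from a valuation applied to suitable edge-direction functions. The NAP constraints (monochromatic triangles, no alternating $3$-paths) then follow from algebraic identities among these functions, not from any a priori geometric decomposition of the motion.
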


To complete the picture, we observe that there is a one-to-one correspondence between stable separations and NAP-colourings, up to swapping colours.
The first implication of the following lemma is a more precise form of \Cref{thm:stableCutImpliesNAC}.

\begin{lemma}
    \label{lem:NAPiffStableCut}
    Let $G$ be a graph without isolated vertices. If $\{G_1,G_2\}$ is a stable separation of~$G$, then colouring all edges in $G_1$ red and all edges in $G_2$ blue yields a NAP-colouring of $G$.  Conversely, if $c:E(G) \to \{\red, \blue\}$ is a NAP-colouring of $G$, then $\{G_{\red}, G_{\blue}\}$ is a stable separation of $G$, where $G_{\red}$ and $G_{\blue}$ are the subgraphs of $G$ induced by the red and blue edges, respectively.  
\end{lemma}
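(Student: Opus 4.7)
The plan is to prove the two directions separately, leveraging the equivalent reformulation stated in the paragraph preceding the lemma: a surjective two-colouring is a NAP-colouring if and only if every edge has an endpoint at which all incident edges share the same colour. Using this reformulation essentially identifies NAP-colourings with stable separations as the same combinatorial object, so no deep idea should be required—only a careful bookkeeping check in each direction.

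For the forward direction, I start with a stable separation $\{G_1,G_2\}$ and colour the edges of $G_1$ red and those of $G_2$ blue; this is well-defined because $\{E(G_1),E(G_2)\}$ partitions $E(G)$. Since $V(G_i)\setminus V(G_{3-i})\neq\varnothing$ for $i\in\{1,2\}$ and $G$ has no isolated vertices, any such vertex is incident to an edge forced to lie in $G_i$, so both colours appear and $c$ is surjective. To check the NAP condition via the equivalent characterisation, I take an arbitrary edge $uv$, and without loss of generality assume $uv$ is red, so $u,v\in V(G_1)$. Stability of the cut $V(G_1)\cap V(G_2)$ forbids both endpoints from lying in this cut, so at least one, say $u$, is not in $V(G_2)$. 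Then every edge incident to $u$ must lie in $G_1$, i.e.\ is red, giving the desired monochromatic endpoint.

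For the reverse direction, I let $G_\red$ and $G_\blue$ denote the subgraphs induced by the red and blue edges, so that their edge sets partition $E(G)$. Surjectivity of $c$ ensures both are nonempty. Choosing any red edge and applying the equivalent NAP characterisation yields an endpoint whose incident edges are all red; this vertex lies in $V(G_\red)\setminus V(G_\blue)$, and the symmetric argument on the blue side produces a vertex in $V(G_\blue)\setminus V(G_\red)$. Finally, to establish stability of $V(G_\red)\cap V(G_\blue)$, I argue by contradiction: if $u,v\in V(G_\red)\cap V(G_\blue)$ and $uv\in E(G)$, then each of $u$ and $v$ is incident to both a red and a blue edge, so neither endpoint of $uv$ can be monochromatic, contradicting the NAP condition applied to $uv$.

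The only mild subtlety is ensuring surjectivity and the nonemptiness of $V(G_i)\setminus V(G_{3-i})$ using the ``no isolated vertices'' hypothesis; once this is handled, both directions reduce to one-line applications of the equivalent formulation, so I do not anticipate any genuine obstacle.
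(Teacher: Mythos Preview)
Your proposal is correct and follows essentially the same approach as the paper: both directions hinge on the reformulation that a surjective two-colouring is NAP if and only if every edge has a monochromatic endpoint. The only cosmetic differences are that the paper proves stability of $V(G_{\red})\cap V(G_{\blue})$ first and then derives the nonemptiness of $V(G_{\red})\setminus V(G_{\blue})$ from it (arguing that otherwise $V(G_{\red})$ would be a stable set), whereas you obtain nonemptiness directly by exhibiting a monochromatic endpoint of a red edge; both arguments are equally short.
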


\begin{proof}
    First, suppose $\{G_1,G_2\}$ is a stable separation of $G$, and for each $e \in E(G)$ let us define a colouring $c \colon E(G) \rightarrow \{ \red, \blue \}$ by letting $c(e)=\red$ if $e \in E(G_1)$ and $c(e) = \blue$ if $e \in E(G_2)$. Note that since $\{G_1,G_2\}$ is a stable separation, $E(G_1)\cap E(G_2)=\varnothing$, so $c$ is well-defined (up to swapping colours). Also, since $E(G_1)$ and $E(G_2)$ are both nonempty, $c$ is surjective. Let $uv \in E(G)$.  Since $V(G_1) \cap V(G_2)$ is a stable set, at least one of $u$ or $v$ (say $u$) is not in $V(G_1) \cap V(G_2)$.  Thus, all edges incident to $u$ are the same colour.  It follows that $c$ is a NAP-colouring.  
    
    Conversely, suppose $c:E(G) \to \{\red, \blue\}$ is a NAP-colouring of $G$, and let $G_{\red}$ and $G_{\blue}$ be the subgraphs of $G$ induced by the red and blue edges, respectively.    We first show that $V(G_{\red}) \cap V(G_{\blue})$ is a stable set of $G$.  Suppose not, and let $\{u,v\} \subseteq V(G_{\red}) \cap V(G_{\blue})$ be an adjacent pair of vertices.  By symmetry, we may assume that $uv$ is red. Since $\{u,v\} \subseteq V(G_{\blue})$, there are blue edges meeting $u$ and $v$, which together with $uv$ contradict $c$ being a NAP-colouring.
    Therefore,  $V(G_{\red}) \cap V(G_{\blue})$ is a stable set of $G$, as claimed.   
    It remains to show that $V(G_{\red}) \setminus V(G_{\blue})$ and $V(G_{\blue}) \setminus V(G_{\red})$ are both nonempty.  Towards a contradiction, suppose, without loss of generality, that $V(G_{\red}) \subseteq V(G_{\blue})$.  But then $V(G_{\red}) \subseteq V(G_{\blue})\cap V(G_{\red})$, which is stable, so there are no red edges, contradicting the surjectivity of $c$.
\end{proof}

\begin{proof}[Alternative proof of \Cref{cor:flexiblestablecut}]
    Let $G$ be a flexible graph.  By \Cref{lem:planesphere}, $G$ is $\mathcal{S}$-flexible.
    Hence, $G$ has a NAP-colouring $c$ by \Cref{thm:NAP}, which gives a stable cut by \Cref{lem:NAPiffStableCut}.
\end{proof}

\section{The number of NAC-colourings}\label{sec:numberofNACs}

Recall that for a graph $G$, we let $\nnac{G}$ denote the number of NAC-colourings of $G$ divided by two. By \Cref{prop:motionbound}, this number gives a lower bound on the number of angle-different motions of $G$. Thus, we may view $\nnac{G}$ as a combinatorial measure of the ``non-generic flexibility'' of a graph. 
From this viewpoint, there are two natural extremal problems: one is to maximise the number of NAC-colourings in rigid graphs, and the other is to minimise the number of NAC-colourings in flexible graphs. 

In this section, we investigate both of these problems. First we show that the number of NAC-colourings of an $n$-vertex graph is $o(4^n)$. On the other hand, we construct minimally rigid graphs on $n$ vertices having over $2^n$ NAC-colourings (\Cref{cor:2+}). For the second problem, we show that the flexible graphs with the minimum number of NAC-colourings are precisely those that arise by gluing a pair of graphs $G_1,G_2$ satisfying $\nnac{G_1}=\nnac{G_2} = 0$ along a vertex (\Cref{prop:flexibleOneNacChar}).
 
\subsection{A bound for dense graphs}

We start by giving a general upper bound on the number of NAC-colourings. While this is only a slight improvement on $2^{|E(G)|}$ in the minimally rigid case, it is substantially better for dense graphs.

\begin{theorem}\label{thm:NACupperbound}
    For every graph $G$ on $n$ vertices, $\nnac{G}\leq \frac{1}{2}\binom{2n-4}{n-2}\sim \frac{2^{2n-5}}{\sqrt{\pi n}}$.
\end{theorem}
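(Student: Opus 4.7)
The plan is to exploit the correspondence between NAC-colourings and pairs of partitions. A NAC-colouring $c$ of $G$ corresponds bijectively to a pair of partitions $(\mathcal{P}_r, \mathcal{P}_b)$ of $V(G)$ into red and blue components. The defining ``induced components'' property translates to two structural constraints: each part induces a connected subgraph of $G$, and $|R \cap B| \leq 1$ for every $R \in \mathcal{P}_r$ and $B \in \mathcal{P}_b$ (the common refinement is trivial). After reducing to the case where $G$ is connected, a short bipartite-incidence argument on the graph with vertex set $\mathcal{P}_r \cup \mathcal{P}_b$ and one edge per vertex of $G$ yields the key constraint $|\mathcal{P}_r| + |\mathcal{P}_b| \leq n + 1$. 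Equivalently, if $r = |\mathcal{P}_r|$ and $s = |\mathcal{P}_b|$, the total ``excess'' $(n-r)+(n-s)$ of the red and blue spanning forests satisfies $(n-r)+(n-s) \leq 2n-4$ whenever the colouring is genuinely surjective (forcing $r,s \geq 2$); here the magic number $2n-4$ matches the target.

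My main approach would be to convert this partition data into an injective (modulo colour swap) encoding by an $(n-2)$-element subset of a fixed $(2n-4)$-element ground set, yielding $\nnac{G} \leq \tfrac{1}{2}\binom{2n-4}{n-2}$. One concrete attempt: choose spanning forests $F_r, F_b$ of the red and blue partitions respectively, pool them into a pair of edge-disjoint subsets with $|F_r| + |F_b| \leq 2n-4$, and encode the pair as a balanced lattice path after a canonical ordering. The main obstacle, as I see it, is designing the encoding so that (i) distinct NAC-colourings produce distinct strings, and (ii) the strings lie in exactly the set counted by $\binom{2n-4}{n-2}$ — naive per-vertex ``new/old red part, new/old blue part'' bits give the right length but are not injective, because ``old'' does not specify which earlier part is matched.

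A possibly more tractable fallback is induction on $n$. The base cases $n \leq 3$ are immediate. If $G$ has a pendant vertex $v$, a direct analysis gives the recurrence $\nnac{G} = 2\nnac{G-v}+1$: each NAC-colouring of $G$ restricts to either a NAC-colouring of $G-v$ (extending in $2$ ways) or an all-monochromatic colouring of $G-v$ (extending in exactly one way, with the opposite colour on the pendant edge). Combined with the inductive hypothesis $\nnac{G-v} \leq \tfrac{1}{2}\binom{2n-6}{n-3}$ and the identity $\binom{2n-4}{n-2} = \tfrac{2(2n-5)}{n-2}\binom{2n-6}{n-3}$, one verifies $\binom{2n-6}{n-3}+1 \leq \tfrac{1}{2}\binom{2n-4}{n-2}$ for every $n \geq 4$. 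The hard part will be handling minimum-degree $\geq 2$ graphs: for a degree-$d$ vertex $v$, a priori there are $2^d$ patterns on the edges at $v$, and one must show via the cycle conditions that the number of NAC-extensions per NAC-colouring of $G-v$ is small enough on average to preserve the inductive bound. I expect this blow-up at high-degree vertices to be the principal technical difficulty, and I anticipate that it is handled by a careful analysis of how red and blue components around $v$ constrain the choices, combined with the partition-theoretic reformulation above.
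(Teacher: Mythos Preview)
Your structural claim that $|R \cap B| \leq 1$ for every red component $R$ and blue component $B$ is false. On the $4$-cycle $u,a,v,b$ colour $ua,av$ red and $vb,bu$ blue: this is a NAC-colouring, yet the red component $\{u,a,v\}$ and the blue component $\{u,b,v\}$ meet in two vertices. The induced-component condition only forbids a blue \emph{edge} inside a red component; it says nothing about non-adjacent pairs, so the common refinement need not be discrete. (Your bipartite incidence graph is therefore a multigraph, not a simple graph; it is still connected when $G$ is, so $r+s\le n+1$ survives --- but you never actually use that inequality, since $(n-r)+(n-s)\le 2n-4$ needs only $r,s\ge 2$.)

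The substantive gap is the encoding, which you correctly flag as the obstacle and do not resolve; the inductive fallback is likewise left open at minimum degree $\geq 2$. The idea you are missing is to encode the \emph{process} rather than the final pair of partitions. Fix an edge ordering $e_1,\ldots,e_m$ and colour greedily, maintaining a partial colouring with no almost-monochromatic cycle and no monochromatic spanning tree: at each step either the colour of $e_i$ is forced, or both colours are legal, in which case read the next symbol from a control string $s$. Whenever a free choice picks $\red$, the endpoints of $e_i$ lay in distinct red components (otherwise $\blue$ would have created an almost-red cycle and been illegal), so the number of red components strictly drops; since that number starts at $n$ and must stay at least $2$, at most $n-2$ free red choices occur, and symmetrically at most $n-2$ free blue choices. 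Hence a string of length $2n-4$ with exactly $n-2$ entries of each colour suffices to recover every NAC-colouring, and there are $\binom{2n-4}{n-2}$ such strings. This sequential viewpoint is exactly what makes your ``new/old'' bits injective: the ordering disambiguates which earlier component the new edge merges into.
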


\noindent The asymptotic bound follows from Stirling's formula; hence we only prove the inequality.

\begin{proof}
    Let us define a \textit{partial NAC-colouring} of $G$ to be a red-blue colouring of some subset of $E(G)$ with the property that
    there is no almost red or almost blue cycle and there is no entirely red or entirely blue spanning forest with the same number of components as $G$.
    Note that a NAC-colouring is precisely a partial NAC-colouring in which every edge is coloured. Moreover, restricting a NAC-colouring to a subset of the edges gives a partial NAC-colouring. 
    However, a partial NAC-colouring cannot necessarily be extended to a NAC-colouring.
    
    For the rest of the proof, fix an ordering $e_1,\ldots, e_m$ of the edges of $G$.
    We build up a sequence of partial NAC-colourings $c_0, \ldots, c_m$, where $c_0$ is the empty colouring, using the following procedure. 
    Choose a sequence $s: [m] \to \{\red,\blue\}$, and set $j=1$.
    Now take the edges in order. If there is exactly one way to extend $c_{i-1}$ to a partial NAC-colouring $c_i$ by colouring $e_i$, do this. If there are no ways, stop the process. If there are two ways, colour $e_i$ with the colour $s(j)$ and increase $j$ by one.
    
    First, we claim that every NAC-colouring $c$ of $G$ can be obtained using this procedure for some $s: [m] \to \{\red,\blue\}$. Indeed, for each $i \in [m]$,
    consider whether the colouring obtained from $c$ by restricting it to $\{e_1,\ldots, e_i\}$ and swapping the colour of $e_i$ is a partial NAC-colouring;
    let $i_1<\cdots <i_k$ be the set of indices~$i$ for which it is. It is not difficult to see that if the sequence $s$ in the procedure starts with $c(e_{i_1}),\ldots,c(e_{i_k})$, then we recover $c$. 
    
    Hence, we only need to show that we can construct at most $\binom{2n-4}{n-2}$ different NAC-colourings by using the above procedure. (Here we are not factoring out by swapping colours, so we have to divide by $2$ in the end.)
    Note that if at step $i$ we use a colour from $s$, say red, then the endvertices of $e_i$ must be in different red components in $c_{i-1}$ (since it was possible to colour $e_i$ blue), and so there are fewer red components in $c_i$ than $c_{i-1}$. Here, by a red component in $c_i$, we refer to the components of the graph on vertex set $V(G)$ whose edges are the ones coloured red by $c_i$. Since $c_i$ is a partial NAC-colouring, there are at least $2$ red components in $c_i$. This means that during the procedure, we can read off the value $\red$ from $s$ at most $n-2$ times, and likewise for the value $\blue$.
    
    It follows that instead of sequences of length $m$, it suffices to consider sequences of length $2n-4$ in which the value $\red$ appears exactly $n-2$ times. The number of such sequences is~$\binom{2n-4}{n-2}$, and thus the number of NAC-colourings that can be obtained using the above procedure is at most this number. 
\end{proof}

We do not know how close \Cref{thm:NACupperbound} is to being tight. It is easy to find sparse graphs on $n$ vertices with roughly $2^n$ NAC-colourings: for example, $\nnac{G}=2^{n-2}-1$ for every $n$-vertex tree, and $\nnac{G}=2^{n-1}-(n+1)$ if $G$ is the $n$-vertex cycle. Our next result shows that dense graphs can also have exponentially many NAC-colourings.

\begin{proposition}\label{prop:NAC_Kmn}
    For every pair of positive integers $n_1,n_2$, $\nnac{K_{n_1, n_2}}=2^{n_1+n_2-2}-1$. 
\end{proposition}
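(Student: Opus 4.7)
The plan is to identify each NAC-colouring of $K_{n_1, n_2}$ with a rank-one structure over $\mathbb{F}_2$. Write the bipartition as $A = \{a_1, \dots, a_{n_1}\}$ and $B = \{b_1, \dots, b_{n_2}\}$, and to each 2-edge-colouring $c$ associate the matrix $M \in \{0,1\}^{n_1 \times n_2}$ defined by $M_{ij} = 0$ if $c(a_i b_j) = \red$ and $M_{ij} = 1$ otherwise. The task is to count those $M$ for which $c$ is a NAC-colouring, and divide by two.

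The key claim is that $c$ is a NAC-colouring if and only if $c$ is surjective and $M_{ij} \equiv r_i + s_j \pmod 2$ for some $r \in \{0,1\}^{n_1}$ and $s \in \{0,1\}^{n_2}$. One direction is essentially a $4$-cycle condition: a $4$-cycle $a_i b_j a_{i'} b_{j'}$ in $K_{n_1,n_2}$ is almost monochromatic iff exactly one of its four edges differs from the others, i.e.\ iff $M_{ij} + M_{i'j} + M_{i'j'} + M_{ij'}$ is odd. Requiring this sum to be even for every $i \neq i'$ and $j \neq j'$ forces the rank-one form $M_{ij} = r_i + s_j$; explicitly, setting $r_i := M_{i1} + M_{11}$ and $s_j := M_{1j}$ works. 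Conversely, any such $M$ automatically satisfies the NAC condition on \emph{every} cycle: for any cycle $a_{i_1} b_{j_1} \cdots a_{i_k} b_{j_k} a_{i_1}$, telescoping gives $\sum_\ell (M_{i_\ell, j_\ell} + M_{i_{\ell+1}, j_\ell}) = \sum_\ell (r_{i_\ell} + r_{i_{\ell+1}}) \equiv 0 \pmod 2$, so every cycle has an even number of blue edges and hence cannot be almost monochromatic.

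With the characterisation in hand, the counting is straightforward. There are $2^{n_1 + n_2}$ ordered pairs $(r, s)$, and the map $(r, s) \mapsto M$ is exactly $2$-to-$1$, since replacing $(r, s)$ by the bitwise complement $(r + \mathbf{1}, s + \mathbf{1})$ yields the same $M$ and is the only such identification. Among the resulting $2^{n_1 + n_2 - 1}$ matrices, precisely two fail surjectivity: the all-zero matrix (arising when $r$ and $s$ are constant and equal) and the all-one matrix (when $r$ and $s$ are constant and opposite). Hence there are $2^{n_1 + n_2 - 1} - 2$ NAC-colourings, and dividing by two to quotient out the swap of colours gives $\nnac{K_{n_1, n_2}} = 2^{n_1 + n_2 - 2} - 1$. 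The heart of the argument is the rank-one characterisation; the degenerate cases $n_1 = 1$ or $n_2 = 1$ (where $K_{n_1,n_2}$ is a star and the $4$-cycle condition is vacuous) fit into the same count without alteration, and the rest is bookkeeping.
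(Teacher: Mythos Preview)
Your proof is correct and takes a genuinely different route from the paper. The paper constructs the colourings $c_{X',Y'}$ explicitly (edges inside $X'\times Y'$ and $(X\setminus X')\times(Y\setminus Y')$ blue, the rest red), asserts that they are NAC-colourings, and then proves the upper bound by a structural analysis of how red and blue components intersect the two sides of the bipartition, eventually showing that any NAC-colouring equals some $c_{X',Y'}$.

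You instead encode a colouring as a $\{0,1\}$-matrix, observe that the NAC condition on $4$-cycles is exactly the $\mathbb{F}_2$-identity $M_{ij}+M_{i'j}+M_{i'j'}+M_{ij'}=0$, and solve this to get the affine rank-one form $M_{ij}=r_i+s_j$. Your telescoping argument then shows every cycle has an even number of blue edges, which disposes of the ``these are NAC-colourings'' direction that the paper leaves implicit. The two characterisations coincide (your $(r,s)$ is the pair of indicator vectors of $X'$ and $Y'$), but your argument is more algebraic and self-contained, while the paper's component analysis gives a more explicit picture of the monochromatic pieces. Both reach the count $2^{n_1+n_2-1}-2$ before halving.
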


\begin{proof}
    Let $(X,Y)$ be the bipartition of $G$ with $|X|=n_1$ and $|Y|=n_2$.  For each $X' \subseteq X$ and $Y' \subseteq Y$ such that $(X',Y') \notin \{(\varnothing, \varnothing), (X, \varnothing), (\varnothing, Y), (X,Y)\}$, we let $c_{X',Y'}$ be the colouring of $E(K_{n_1, n_2})$ such that all edges between~$X'$ and $Y'$ are blue, all edges between $X \setminus X'$ and $Y \setminus Y'$ are blue, all edges between~$X'$ and $Y \setminus Y'$ are red, and all edges between $X \setminus X'$ and $Y'$ are red.  Note that $c_{X',Y'}$ is a NAC-colouring. Moreover, $c_{X_1', Y_2'}=c_{X_1, Y_2}$ (up to flipping colours) if and only if $(X_1', Y_1') \in \{(X_1, Y_1), (X_1, Y \setminus Y_1), (X \setminus X_1, Y_1), (X \setminus X_1, Y \setminus Y_1)\}$. Thus,
\[
    \nnac{G} \geq \frac{1}{4} (2^{n_1} 2^{n_2}-4)=2^{n_1+n_2-2}-1.
\]
    To prove equality, we show that every NAC-colouring of $G$ is equal to $c_{X',Y'}$ for some $X' \subseteq X$ and $Y' \subseteq Y$.  Let $c$ be a NAC-colouring of $G$.
    
    \begin{claim}
        Let $B$ be a blue component of $c$ and $R$ be a red component of $c$ such that $V(B) \cap V(R) \cap Y \neq \varnothing$.  Then
        $V(B) \cap Y=V(R)\cap Y$ and $V(B) \cap X= X \setminus V(R)$.  
    \end{claim}
    
    \begin{proof}[Proof of Claim]
        Let $X_B:=V(B) \cap X$, $Y_B:=V(B) \cap Y$, $X_R:=V(R) \cap X$ and $Y_R:=V(R) \cap Y$.  By assumption $Y_B \cap Y_R \neq \varnothing$.  We claim that $X_B \cap X_R =\varnothing$.  Suppose not, and let $x \in X_B \cap X_R $ and $y \in Y_B \cap Y_R$.  Since $B$ is an induced subgraph (recall the equivalent conditions below \Cref{def:NAC}), $xy$ is blue.  On the other hand, since $R$ is an induced subgraph, $xy$ is red, which is a contradiction.  
        Suppose $Y_R \setminus Y_B \neq \varnothing$.  Choose $x_B \in X_B$ and $y_R \in Y_R \setminus Y_B$.  Then $x_By_R$ cannot be blue by the maximality of $B$.  On the other hand, $x_By_R$ cannot be red since $X_B \cap X_R=\varnothing$.  Thus, $Y_R \subseteq Y_B$. Analogously by swapping $R$ and $B$, we have $Y_B \setminus Y_R = \varnothing$. Thus, $Y_B=Y_R$.
        It remains to show that $X_R=X \setminus X_B$.  Since $X_B \cap X_R=\varnothing$, it suffices to show that $X \setminus (X_B \cup X_R) = \varnothing$.  If $x \in X \setminus (X_B \cup X_R)$ and $y \in Y_B=Y_R$, then $xy$ cannot be red by the maximality of $R$ and $xy$ cannot be blue by the maximality of $B$.  Thus,  $X \setminus (X_B \cup X_R) \neq \varnothing$, as required.  
    \end{proof}
    
    Let $R$ be a red component and $B$ a blue component such that $V(B) \cap V(R) \cap Y \neq \varnothing$.  By the above claim, $V(B) \cap Y=V(R)\cap Y$ and $V(B) \cap X= X \setminus V(R)$.  Let $B'$ be a blue component such that  $V(B') \cap V(R) \cap X \neq \varnothing$.  By the above claim, $V(B') \cap X=V(R) \cap X$ and $V(B') \cap Y=Y \setminus V(R)$.  Let $R'$ be a red component such that $V(R') \cap V(B') \cap Y \neq \varnothing$.  By the above claim, $V(R') \cap Y=V(B') \cap Y$, and $V(R') \cap X=X \setminus V(B')$.  Thus, setting $X'=V(B) \cap X$ and $Y'=V(B) \cap Y$, we have $c=c_{X', Y'}$, as required. 
\end{proof}

Given two graphs $G_1$ and $G_2$, we let $G_1 \sqcup G_2$ denote the disjoint union of $G_1$ and $G_2$.  The following proposition gives an explicit formula for $\nnac{G_1 \sqcup G_2}$.  We omit the proof since it is a special case of a more general formula (\Cref{lem:blockproduct}), which we will prove later.  

\begin{proposition} \label{prop:disjointunion}
    For all graphs $G_1$ and $G_2$,
    \begin{equation*}
        \nnac{G_1 \sqcup G_2}=
        \begin{cases}
            \nnac{G_1}, & \text{if $E(G_2)=\varnothing$}, \\
            \nnac{G_2}, & \text{if $E(G_1)=\varnothing$}, \\
            2(\nnac{G_1}+1)(\nnac{G_2}+1)-1,  & \text{if $E(G_1) \neq \varnothing$ and $E(G_2) \neq \varnothing$.}
        \end{cases}
    \end{equation*}
\end{proposition}

The \emph{join} of $G_1$ and $G_2$, denoted by $G_1 \vee G_2$, is the graph obtained from $G_1 \sqcup G_2$ by adding all edges between $V(G_1)$ and $V(G_2)$. For a graph $G$, we let $\lambda(G)$ be the number of connected components of $G$.
Note that if $H$ is a connected component of $G_1$ with $|V(H)| \geq 2$, then in every NAC-colouring of $G_1 \vee G_2$, $H$ is monochromatic and every edge between $V(G_2)$ and $V(H)$ must be the same colour as $H$.  Combining this observation with~\Cref{prop:NAC_Kmn} gives the following explicit formula for $\nnac{G_1 \vee G_2}$.

\begin{proposition} \label{lem:join}
    For all graphs $G_1$ and $G_2$,
    \begin{equation*}
        \nnac{G_1 \vee G_2}=
        \begin{cases}
            0, & \text{if $E(G_1) \neq \varnothing$ and $E(G_2) \neq \varnothing$}, \\
            2^{\lambda(G_1)-1}-1,  & \text{if $E(G_1) \neq \varnothing$ and $E(G_2)=\varnothing$}, \\
            2^{\lambda(G_2)-1}-1,  & \text{if $E(G_2) \neq \varnothing$ and $E(G_1)=\varnothing$}, \\
            2^{\lambda(G_1)+\lambda(G_2)-2}-1, & \text{if $E(G_1)=E(G_2)=\varnothing$.}
        \end{cases}
    \end{equation*}
\end{proposition}

A \emph{cograph} is a graph which can be recursively built using $\sqcup$ and $\vee$ (starting from single vertex graphs).    
Observe that by combining~\Cref{prop:disjointunion} and~\Cref{lem:join} we can efficiently compute $\nnac{G}$ for every cograph $G$.

\subsection{Minimally rigid graphs}

In this subsection, we concentrate on the problem of maximising the number of NAC-colourings in minimally rigid graphs. While we cannot match the upper bound given by \Cref{thm:NACupperbound}, we show that there exists an infinite sequence of minimally rigid graphs $G_k$ for which $\nnac{G_k}$ is exponential in $|V(G_k)|$.

We give two proofs of this fact. The first is a simple construction for which the NAC-colourings can be precisely described, and for which there are approximately $2^{|G_k|}$ of them. The second uses detailed calculations for a small graph to define a sequence for which the base of the exponential is strictly greater than $2$.

See \Cref{fig:NACnumbers_all} and \Cref{fig:NACnumbers_all_11} for the distribution of $\nnac{G}$ for minimally rigid graphs on up to ten vertices and on eleven and twelve vertices respectively, and \Cref{fig:maxNACs} for some minimally rigid graphs that have the maximum number of NAC-colourings on a given number of vertices.
\begin{figure}[t]
    \centering
    \begin{tikzpicture}[yscale=0.5]
        \foreach \xstep in {0.045}
        {
            \foreach \n/\col [count=\i] in {0/black!75, 255/black!75, 31/colB, 63/colR, 127/colG, 307/colY}
            {
                \draw[black!25] ({\xstep*(\n+0.5)},-0.1) node[below,alabelsty] {\textcolor{\col}{$\n$}};
            }
            \draw[black!25] (0,0)--(127*\xstep,0);
            \foreach \y/\col [count=\i] in {25/colB, 167/colR, 1410/colG, 12219/colY, 1/black!75}
            {
                \draw[black!25] (0,{log2(1+\y)})--(-0.1,{log2(1+\y)}) node[left,alabelsty] {\textcolor{\col}{$\y$}};
                \draw[black!25] (0,{log2(1+\y)})--(308*\xstep,{log2(1+\y)});
            }
            \node[rotate=-90,alabelsty] at (-1,{0.5*log2(12219)}) {\# graphs (log)};
            \node[alabelsty] at (153*\xstep,-1.5) {$\nnac{G}$};
            
            \foreach \nacs/\graphs [count=\i] in {0/529, 1/4159, 2/2478, 3/11432, 4/4722, 5/4359, 6/6372, 7/12219, 8/5221, 9/4269, 10/3035, 11/3179, 12/4390, 13/4943, 14/2117, 15/7609, 16/1610, 17/2099, 18/1877, 19/1362, 20/1106, 21/1336, 22/752, 23/1461, 24/938, 25/1934, 26/533, 27/1793, 28/296, 29/436, 30/545, 31/2417, 32/410, 33/300, 34/433, 35/379, 36/205, 37/404, 38/173, 39/240, 40/182, 41/223, 42/84, 43/277, 44/149, 45/286, 46/157, 47/327, 48/66, 49/224, 50/127, 51/446, 52/48, 53/79, 54/114, 55/272, 56/93, 57/45, 58/61, 59/66, 60/48, 61/111, 62/39, 63/669, 64/27, 65/74, 66/47, 67/50, 68/22, 69/72, 70/25, 71/40, 72/31, 73/32, 74/12, 75/60, 76/19, 77/22, 78/24, 79/37, 80/13, 81/22, 82/20, 83/25, 84/12, 85/36, 86/30, 87/54, 88/8, 89/38, 90/15, 91/70, 93/29, 94/19, 95/58, 96/3, 97/16, 98/12, 99/47, 100/18, 101/5, 102/9, 103/85, 104/11, 105/9, 106/6, 107/9, 108/12, 109/22, 110/3, 111/20, 112/3, 113/13, 114/4, 115/4, 116/3, 117/3, 118/8, 119/3, 120/2, 121/3, 122/3, 123/8, 124/4, 125/5, 126/5, 127/138, 128/2, 129/4, 130/2, 131/4, 133/4, 134/4, 135/2, 136/4, 137/1, 138/2, 139/1, 140/1, 141/1, 142/3, 144/1, 145/6, 146/2, 147/1, 148/1, 149/1, 150/5, 151/1, 152/3, 153/1, 154/6, 155/1, 156/1, 157/5, 158/1, 159/2, 160/4, 161/2, 162/4, 163/2, 164/3, 165/6, 166/1, 167/1, 168/1, 170/1, 171/3, 172/1, 173/9, 174/2, 175/2, 176/1, 179/1, 180/2, 181/5, 182/1, 183/1, 184/1, 185/1, 186/3, 187/9, 188/2, 190/1, 191/1, 192/2, 194/2, 197/2, 198/2, 199/1, 201/5, 202/2, 203/1, 204/1, 205/2, 208/1, 209/4, 213/3, 217/2, 219/8, 221/1, 222/2, 225/3, 227/6, 231/1, 234/1, 235/1, 237/1, 240/1, 241/1, 247/4, 255/120, 257/1, 301/1, 302/1, 307/1}
            {
                \draw[chart,draw=colY,fill=colY!50!white] (\xstep*\nacs,0) rectangle (\xstep*\nacs+\xstep,{log2(1+\graphs)});
            }
            
            \foreach \nacs/\graphs [count=\i] in {0/136, 1/742, 2/332, 3/1410, 4/450, 5/304, 6/547, 7/976, 8/302, 9/169, 10/143, 11/106, 12/245, 13/209, 14/61, 15/379, 16/37, 17/36, 18/74, 19/19, 20/21, 21/19, 22/25, 23/36, 24/23, 25/65, 26/6, 27/42, 28/3, 29/4, 30/19, 31/105, 32/10, 33/5, 34/10, 35/5, 36/2, 37/9, 38/1, 39/4, 40/1, 41/4, 42/1, 43/6, 44/3, 45/9, 46/4, 47/9, 48/1, 49/7, 50/1, 51/14, 52/1, 53/1, 54/6, 55/2, 58/1, 62/1, 63/21, 66/1, 72/2, 78/1, 82/1, 85/1, 86/2, 87/1, 90/1, 93/2, 98/1, 100/1, 104/1, 109/2, 113/1, 123/1, 127/19}
            {
                \draw[chart,draw=colG,fill=colG!50!white] (\xstep*\nacs,0) rectangle (\xstep*\nacs+\xstep,{log2(1+\graphs)});
            }
            \foreach \nacs/\graphs [count=\i] in {0/39, 1/132, 2/39, 3/167, 4/34, 5/14, 6/37, 7/67, 8/8, 9/4, 10/3, 11/1, 12/13, 13/6, 14/1, 15/22, 16/2, 18/3, 22/1, 23/1, 24/1, 25/3, 31/3, 46/1, 54/1, 63/5}
            {
                \draw[chart,draw=colR,fill=colR!50!white] (\xstep*\nacs,0) rectangle (\xstep*\nacs+\xstep,{log2(1+\graphs)});
            }
            \foreach \nacs/\graphs [count=\i] in {0/12, 1/25, 2/4, 3/18, 4/1, 6/2, 7/5, 12/1, 15/1, 31/1}
            {
                \draw[chart] (\xstep*\nacs,0) rectangle (\xstep*\nacs+\xstep,{log2(1+\graphs)});
            }
        }
    \end{tikzpicture}
    \caption{The numbers of minimally rigid graphs on 7 (blue), 8 (red), 9 (green) and 10 vertices (yellow)
    according to the number of NAC-colourings.
    The maximal values for each number of vertices are indicated by the labels.
    On 6 vertices, there are 5 graphs with no NAC-colouring, 5 with $\nnac{G}=1$, 3 with $\nnac{G}=2$,
    and the complete bipartite graph $K_{3,3}$ with $\nnac{K_{3,3}}=15$.
    The numbers were determined computationally using the \textsc{SageMath} package \textsc{FlexRiLoG}~\cite{FlexRiLoGPaper} and the database~\cite{minRigidDatabase}.
    }
    \label{fig:NACnumbers_all}
\end{figure}
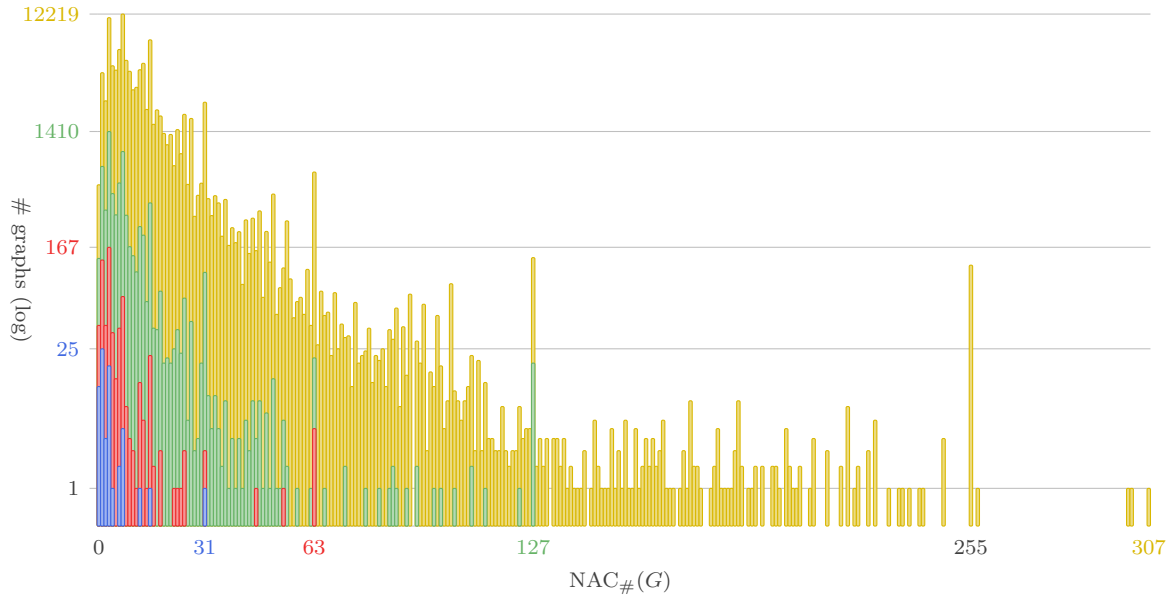

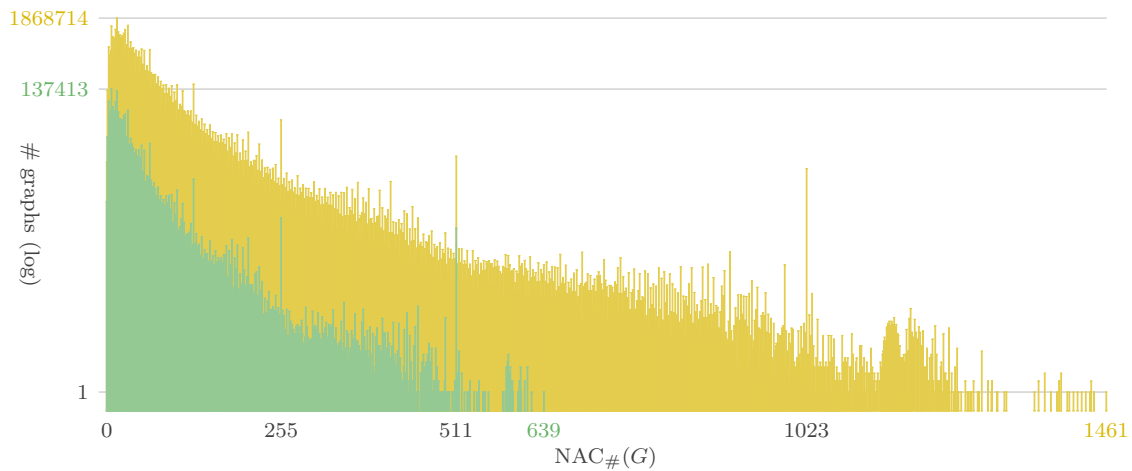
\begin{figure}[ht]
    \centering
    \begin{tikzpicture}[yscale=0.25]
        \foreach \xstep in {0.009}
        {
            \foreach \n in {0, 255, 511, 1023}
            {
                \draw[black!25] ({\xstep*(\n+0.5)},-0.1) node[below,alabelsty] {\textcolor{black!75}{$\n$}};
            }
            \foreach \n/\col [count=\i] in {639/colG, 1461/colY}
            {
                \draw[black!25] ({\xstep*(\n+0.5)},-0.1) node[below,alabelsty] {\textcolor{\col}{$\n$}};
            }
            \draw[black!25] (0,0)--(127*\xstep,0);
            \foreach \y/\logy/\col [count=\i] in {1/1/black!75, 137413/17.0681/colG, 1868714/20.8336/colY}
            {
                \draw[black!25] (0,\logy)--(-0.1,\logy) node[left,alabelsty] {\textcolor{\col}{$\y$}};
                \draw[black!25] (0,\logy)--(1462*\xstep,\logy);
            }
            \node[rotate=-90,alabelsty] at (-1,{0.5*20.8336}) {\# graphs (log)};
            \node[alabelsty] at (730*\xstep,-2.5) {$\nnac{G}$};
            
            \foreach \nacs/\graphs [count=\i] in {0/13.19, 1/17.01, 2/16.81, 3/19.30, 4/18.41, 5/18.87, 6/19.09, 7/20.41, 8/19.39, 9/19.85, 10/19.08, 11/19.77, 12/19.50, 13/20.24, 14/19.20, 15/20.83, 16/19.27, 17/20.10, 18/19.31, 19/19.95, 20/19.17, 21/19.82, 22/18.84, 23/19.91, 24/19.07, 25/20.03, 26/18.90, 27/20.23, 28/18.64, 29/19.43, 30/18.74, 31/20.45, 32/18.71, 33/19.23, 34/18.69, 35/19.57, 36/18.28, 37/19.21, 38/18.24, 39/19.26, 40/18.33, 41/18.97, 42/17.93, 43/19.13, 44/17.96, 45/18.76, 46/18.02, 47/19.07, 48/17.77, 49/18.75, 50/17.80, 51/19.19, 52/17.47, 53/18.38, 54/17.61, 55/19.12, 56/17.42, 57/18.05, 58/17.33, 59/18.30, 60/17.19, 61/18.01, 62/17.07, 63/19.17, 64/16.88, 65/17.95, 66/16.94, 67/17.85, 68/16.79, 69/17.80, 70/16.78, 71/17.85, 72/16.56, 73/17.34, 74/16.55, 75/17.72, 76/16.38, 77/17.21, 78/16.35, 79/17.48, 80/16.17, 81/17.09, 82/16.11, 83/17.28, 84/16.17, 85/16.83, 86/16.04, 87/17.31, 88/15.82, 89/16.84, 90/16.06, 91/17.11, 92/15.68, 93/16.68, 94/15.80, 95/17.25, 96/15.46, 97/16.34, 98/15.60, 99/16.91, 100/15.32, 101/16.45, 102/15.45, 103/17.29, 104/15.19, 105/15.98, 106/15.22, 107/16.26, 108/15.31, 109/16.16, 110/14.95, 111/16.97, 112/14.92, 113/15.92, 114/14.83, 115/15.86, 116/14.86, 117/15.69, 118/14.89, 119/15.92, 120/14.67, 121/15.66, 122/14.58, 123/15.90, 124/14.57, 125/15.31, 126/14.48, 127/17.33, 128/14.36, 129/15.20, 130/14.50, 131/15.68, 132/14.19, 133/15.10, 134/14.23, 135/15.36, 136/14.15, 137/14.92, 138/13.91, 139/15.48, 140/14.01, 141/14.81, 142/14.08, 143/15.21, 144/13.92, 145/14.70, 146/13.94, 147/14.92, 148/13.87, 149/14.60, 150/13.78, 151/15.19, 152/13.58, 153/14.45, 154/13.63, 155/14.81, 156/13.46, 157/14.27, 158/13.50, 159/14.76, 160/13.52, 161/14.26, 162/13.42, 163/14.60, 164/13.32, 165/14.20, 166/13.27, 167/14.63, 168/13.21, 169/14.23, 170/13.08, 171/14.41, 172/12.94, 173/14.12, 174/12.98, 175/14.70, 176/12.81, 177/13.77, 178/12.98, 179/14.48, 180/12.80, 181/13.98, 182/12.62, 183/14.70, 184/12.61, 185/13.51, 186/12.68, 187/14.21, 188/12.70, 189/13.67, 190/12.61, 191/14.64, 192/12.54, 193/13.22, 194/12.45, 195/13.67, 196/12.46, 197/13.28, 198/12.47, 199/14.37, 200/12.37, 201/13.26, 202/12.14, 203/13.78, 204/12.49, 205/13.29, 206/12.09, 207/14.78, 208/12.05, 209/12.96, 210/11.95, 211/13.22, 212/12.21, 213/13.01, 214/12.04, 215/13.29, 216/12.16, 217/13.26, 218/11.84, 219/13.53, 220/11.90, 221/12.76, 222/11.79, 223/14.15, 224/11.69, 225/12.70, 226/11.68, 227/13.30, 228/11.73, 229/12.36, 230/11.71, 231/12.88, 232/11.57, 233/12.58, 234/11.61, 235/12.92, 236/11.49, 237/12.55, 238/11.38, 239/12.81, 240/11.45, 241/12.27, 242/11.30, 243/12.87, 244/11.28, 245/12.14, 246/11.29, 247/13.07, 248/11.10, 249/12.05, 250/11.15, 251/12.34, 252/11.22, 253/11.93, 254/11.12, 255/15.43, 256/11.04, 257/12.00, 258/10.98, 259/12.33, 260/10.83, 261/11.83, 262/10.89, 263/12.61, 264/10.71, 265/11.62, 266/10.80, 267/12.15, 268/10.80, 269/11.71, 270/10.86, 271/12.24, 272/10.74, 273/11.60, 274/10.74, 275/11.82, 276/10.67, 277/11.47, 278/10.71, 279/12.47, 280/10.71, 281/11.41, 282/10.79, 283/11.81, 284/10.59, 285/11.56, 286/10.62, 287/12.00, 288/10.48, 289/11.24, 290/10.46, 291/11.81, 292/10.29, 293/11.37, 294/10.48, 295/11.80, 296/10.01, 297/11.30, 298/10.14, 299/11.57, 300/10.10, 301/11.32, 302/10.28, 303/12.16, 304/10.24, 305/11.11, 306/10.33, 307/11.43, 308/9.920, 309/11.16, 310/10.26, 311/11.78, 312/10.05, 313/10.91, 314/9.977, 315/11.34, 316/10.07, 317/10.92, 318/9.809, 319/11.80, 320/9.852, 321/11.04, 322/9.822, 323/11.33, 324/9.908, 325/10.91, 326/9.738, 327/11.59, 328/9.662, 329/10.90, 330/9.596, 331/11.45, 332/9.706, 333/10.71, 334/9.604, 335/11.59, 336/9.527, 337/10.67, 338/9.683, 339/11.40, 340/9.564, 341/10.43, 342/9.457, 343/11.59, 344/9.308, 345/10.31, 346/9.655, 347/11.56, 348/9.468, 349/10.34, 350/9.496, 351/11.86, 352/9.433, 353/10.35, 354/9.290, 355/10.78, 356/9.655, 357/10.62, 358/9.628, 359/11.75, 360/9.564, 361/10.42, 362/9.190, 363/11.25, 364/9.297, 365/10.04, 366/9.111, 367/12.12, 368/8.983, 369/10.18, 370/9.177, 371/10.37, 372/9.401, 373/10.50, 374/9.116, 375/11.29, 376/9.124, 377/10.45, 378/8.842, 379/11.06, 380/8.867, 381/9.905, 382/8.980, 383/11.78, 384/9.066, 385/10.14, 386/8.577, 387/10.20, 388/8.898, 389/9.945, 390/8.980, 391/10.73, 392/8.577, 393/9.963, 394/8.718, 395/10.29, 396/8.611, 397/10.33, 398/8.669, 399/11.70, 400/8.435, 401/9.687, 402/8.371, 403/10.77, 404/8.276, 405/9.539, 406/8.855, 407/10.33, 408/8.889, 409/10.19, 410/8.362, 411/10.66, 412/8.480, 413/9.451, 414/8.520, 415/12.17, 416/8.371, 417/9.439, 418/8.488, 419/10.02, 420/8.238, 421/9.324, 422/8.644, 423/9.966, 424/8.335, 425/9.326, 426/8.379, 427/10.15, 428/8.349, 429/9.589, 430/8.388, 431/9.932, 432/8.000, 433/9.459, 434/8.103, 435/10.60, 436/8.082, 437/9.011, 438/8.349, 439/10.46, 440/7.755, 441/8.748, 442/7.937, 443/9.644, 444/8.109, 445/8.997, 446/7.814, 447/10.83, 448/7.960, 449/9.033, 450/7.925, 451/9.665, 452/7.468, 453/8.633, 454/7.555, 455/10.23, 456/7.366, 457/8.807, 458/7.401, 459/8.940, 460/7.794, 461/8.721, 462/7.644, 463/9.373, 464/7.607, 465/8.607, 466/7.644, 467/8.969, 468/7.672, 469/8.931, 470/7.331, 471/9.299, 472/7.150, 473/8.271, 474/7.883, 475/9.537, 476/7.219, 477/8.140, 478/7.539, 479/8.830, 480/7.401, 481/8.581, 482/7.500, 483/8.697, 484/7.180, 485/7.954, 486/6.858, 487/8.910, 488/7.022, 489/8.050, 490/7.044, 491/8.562, 492/6.943, 493/8.335, 494/7.435, 495/9.437, 496/7.276, 497/7.775, 498/7.257, 499/8.629, 500/6.989, 501/8.061, 502/6.907, 503/8.570, 504/6.858, 505/8.011, 506/6.672, 507/8.358, 508/7.066, 509/8.082, 510/7.066, 511/13.51, 512/6.781, 513/7.794, 514/7.295, 515/8.615, 516/6.714, 517/7.852, 518/6.807, 519/8.585, 520/6.883, 521/7.768, 522/6.919, 523/7.960, 524/6.820, 525/7.775, 526/6.919, 527/8.508, 528/6.755, 529/7.539, 530/6.954, 531/7.781, 532/6.807, 533/7.150, 534/6.907, 535/8.492, 536/6.426, 537/7.418, 538/6.629, 539/7.972, 540/6.883, 541/7.735, 542/6.524, 543/8.124, 544/6.768, 545/7.229, 546/6.728, 547/7.827, 548/6.644, 549/7.418, 550/6.768, 551/7.983, 552/6.555, 553/7.459, 554/6.700, 555/7.925, 556/6.615, 557/7.839, 558/6.781, 559/7.966, 560/6.700, 561/7.820, 562/6.508, 563/7.794, 564/6.570, 565/7.577, 566/6.190, 567/8.006, 568/6.190, 569/7.629, 570/6.022, 571/7.943, 572/6.170, 573/7.665, 574/6.508, 575/7.392, 576/6.267, 577/7.257, 578/5.977, 579/7.665, 580/6.190, 581/7.600, 582/6.459, 583/8.285, 584/6.570, 585/7.547, 586/7.066, 587/8.243, 588/6.700, 589/7.748, 590/6.954, 591/7.392, 592/6.322, 593/7.451, 594/6.209, 595/7.539, 596/6.285, 597/6.794, 598/6.476, 599/7.409, 600/6.087, 601/7.276, 602/6.285, 603/8.234, 604/6.066, 605/7.672, 606/6.459, 607/7.807, 608/6.022, 609/7.150, 610/5.883, 611/7.983, 612/6.022, 613/7.547, 614/5.833, 615/7.409, 616/5.883, 617/6.870, 618/6.066, 619/8.170, 620/6.000, 621/7.055, 622/5.833, 623/7.476, 624/6.375, 625/7.109, 626/5.524, 627/7.340, 628/5.833, 629/6.833, 630/6.229, 631/7.858, 632/6.066, 633/6.870, 634/5.858, 635/7.500, 636/6.066, 637/7.257, 638/6.150, 639/7.741, 640/6.022, 641/6.870, 642/5.833, 643/7.895, 644/6.000, 645/6.728, 646/5.954, 647/7.375, 648/6.044, 649/6.989, 650/5.833, 651/7.665, 652/6.022, 653/6.845, 654/5.807, 655/7.629, 656/5.977, 657/6.358, 658/5.615, 659/7.592, 660/6.209, 661/6.585, 662/6.340, 663/8.304, 664/6.087, 665/6.768, 666/6.087, 667/7.077, 668/6.000, 669/6.728, 670/5.931, 671/7.728, 672/6.150, 673/6.209, 674/5.755, 675/6.820, 676/6.190, 677/6.508, 678/6.150, 679/6.989, 680/6.066, 681/6.340, 682/5.426, 683/7.304, 684/5.728, 685/6.304, 686/5.322, 687/7.492, 688/5.781, 689/6.044, 690/5.644, 691/6.644, 692/5.883, 693/6.755, 694/4.858, 695/8.418, 696/5.755, 697/6.600, 698/5.555, 699/6.728, 700/5.833, 701/6.820, 702/5.426, 703/7.267, 704/5.672, 705/6.375, 706/5.358, 707/7.180, 708/5.615, 709/6.358, 710/5.492, 711/6.644, 712/5.524, 713/6.989, 714/5.426, 715/6.322, 716/5.248, 717/7.150, 718/5.700, 719/7.375, 720/5.129, 721/7.190, 722/5.392, 723/6.728, 724/5.044, 725/6.409, 726/5.285, 727/8.028, 728/5.392, 729/6.931, 730/4.700, 731/6.570, 732/4.807, 733/6.000, 734/4.644, 735/6.658, 736/4.858, 737/6.629, 738/5.358, 739/6.555, 740/5.000, 741/6.267, 742/5.322, 743/6.833, 744/5.170, 745/6.700, 746/5.209, 747/7.592, 748/5.000, 749/6.229, 750/5.129, 751/8.119, 752/5.129, 753/6.229, 754/5.087, 755/7.129, 756/4.858, 757/6.322, 758/5.392, 759/6.508, 760/4.700, 761/6.000, 762/5.087, 763/7.322, 764/4.087, 765/5.358, 766/5.087, 767/6.267, 768/4.755, 769/6.409, 770/5.129, 771/7.524, 772/4.524, 773/5.392, 774/4.858, 775/5.833, 776/5.087, 777/6.375, 778/4.858, 779/7.285, 780/4.954, 781/5.833, 782/4.954, 783/6.044, 784/4.807, 785/5.977, 786/4.858, 787/7.426, 788/4.755, 789/4.907, 790/5.044, 791/7.285, 792/4.392, 793/5.392, 794/4.755, 795/6.524, 796/4.170, 797/5.644, 798/5.248, 799/6.728, 800/4.392, 801/5.426, 802/4.755, 803/6.615, 804/3.807, 805/5.000, 806/4.700, 807/7.577, 808/4.392, 809/5.358, 810/4.000, 811/7.033, 812/4.170, 813/4.807, 814/4.248, 815/7.375, 816/3.322, 817/5.728, 818/4.585, 819/6.672, 820/3.700, 821/5.044, 822/4.907, 823/7.267, 824/4.000, 825/5.248, 826/3.700, 827/6.741, 828/4.087, 829/5.209, 830/4.170, 831/6.459, 832/4.000, 833/5.087, 834/4.755, 835/6.267, 836/3.907, 837/4.700, 838/4.087, 839/7.637, 840/4.392, 841/4.858, 842/4.170, 843/6.087, 844/4.170, 845/5.858, 846/3.907, 847/6.066, 848/3.807, 849/5.555, 850/3.000, 851/6.755, 852/4.524, 853/5.087, 854/3.700, 855/7.600, 856/4.322, 857/4.644, 858/4.087, 859/6.285, 860/4.392, 861/5.248, 862/4.459, 863/5.615, 864/4.322, 865/4.858, 866/4.322, 867/5.248, 868/3.700, 869/5.833, 870/3.700, 871/7.629, 872/4.524, 873/4.954, 874/2.807, 875/5.907, 876/4.087, 877/4.954, 878/4.392, 879/7.781, 880/4.322, 881/5.087, 882/3.459, 883/5.672, 884/4.459, 885/5.087, 886/4.585, 887/7.066, 888/4.000, 889/5.209, 890/3.459, 891/5.585, 892/3.700, 893/5.492, 894/2.807, 895/4.392, 896/4.322, 897/5.248, 898/2.322, 899/6.615, 900/4.087, 901/6.109, 902/4.000, 903/7.435, 904/4.000, 905/4.170, 906/3.807, 907/3.700, 908/2.807, 909/4.755, 910/3.000, 911/8.443, 912/3.700, 913/4.248, 914/2.807, 915/3.700, 916/3.585, 917/4.524, 918/3.322, 919/5.459, 920/4.087, 921/4.954, 922/2.322, 923/6.170, 924/3.459, 925/4.807, 926/3.000, 927/6.170, 928/3.807, 929/5.129, 930/2.585, 931/6.087, 932/2.807, 933/4.807, 934/2.585, 935/6.066, 936/3.322, 937/4.459, 938/2.322, 939/6.229, 940/2.585, 941/4.000, 942/4.000, 943/6.895, 944/3.170, 945/3.322, 946/3.700, 947/4.807, 948/3.459, 949/5.358, 950/1.585, 951/6.555, 952/2.000, 953/4.392, 954/2.000, 955/4.322, 956/3.170, 957/5.044, 958/2.322, 959/5.209, 960/3.170, 961/5.044, 962/2.585, 963/6.267, 964/2.322, 965/4.170, 966/1.585, 967/5.129, 968/2.807, 969/4.087, 970/2.000, 971/4.907, 972/2.000, 973/2.585, 974/2.807, 975/4.644, 976/1.585, 977/3.170, 978/3.459, 979/3.585, 980/3.585, 981/4.087, 982/2.000, 983/3.585, 984/4.000, 985/3.907, 986/2.000, 987/4.392, 988/2.322, 989/4.524, 990/1.585, 991/7.755, 992/2.322, 993/4.248, 994/2.322, 995/3.170, 996/2.585, 997/4.087, 998/1.585, 999/3.585, 1001/4.392, 1002/2.585, 1003/4.322, 1004/2.000, 1005/2.585, 1006/1.585, 1007/2.322, 1008/1.585, 1009/2.807, 1010/2.000, 1011/4.248, 1012/1.000, 1013/3.000, 1014/2.585, 1015/4.459, 1016/1.000, 1017/3.322, 1018/2.322, 1019/4.644, 1020/2.000, 1021/4.170, 1022/1.000, 1023/12.85, 1024/1.000, 1025/3.000, 1026/1.000, 1027/5.087, 1028/2.000, 1029/4.000, 1030/1.585, 1031/6.229, 1032/2.322, 1033/3.459, 1034/1.000, 1035/2.322, 1036/1.000, 1037/2.807, 1038/1.585, 1039/4.858, 1041/2.322, 1043/2.585, 1044/2.322, 1045/2.322, 1046/1.585, 1047/4.000, 1048/2.000, 1049/2.322, 1050/1.000, 1051/2.585, 1052/1.000, 1053/2.322, 1054/2.000, 1055/4.000, 1056/1.000, 1057/2.585, 1058/1.000, 1059/4.000, 1060/1.585, 1061/2.807, 1062/2.000, 1063/3.807, 1064/1.000, 1065/1.585, 1066/1.000, 1067/2.322, 1068/2.000, 1069/1.585, 1070/1.585, 1071/2.000, 1073/1.585, 1074/1.000, 1075/4.000, 1077/2.585, 1078/1.585, 1079/2.807, 1080/1.000, 1081/1.000, 1082/2.585, 1083/2.807, 1084/1.000, 1086/2.322, 1087/2.322, 1088/1.000, 1089/4.087, 1091/3.000, 1092/1.585, 1093/1.585, 1094/1.585, 1095/2.585, 1096/2.807, 1097/1.000, 1098/2.000, 1099/1.000, 1100/1.000, 1101/3.459, 1102/1.000, 1103/3.459, 1104/1.000, 1105/2.322, 1106/1.000, 1107/2.807, 1108/1.000, 1109/1.000, 1110/2.000, 1111/2.585, 1113/2.000, 1114/1.000, 1115/2.000, 1116/1.000, 1117/3.322, 1119/2.322, 1121/1.585, 1122/2.000, 1123/1.000, 1125/1.585, 1127/2.000, 1129/1.585, 1131/2.585, 1132/2.322, 1133/2.000, 1134/3.000, 1135/3.170, 1136/3.700, 1137/3.907, 1138/3.700, 1139/3.700, 1140/4.392, 1141/4.459, 1142/4.644, 1143/4.087, 1144/4.755, 1145/4.248, 1146/4.524, 1147/4.755, 1148/4.644, 1149/4.087, 1150/4.585, 1151/4.954, 1152/4.459, 1153/4.248, 1154/4.585, 1155/3.585, 1156/4.087, 1157/4.000, 1158/2.585, 1159/3.459, 1160/2.807, 1161/2.585, 1162/2.807, 1163/2.807, 1164/2.000, 1165/3.000, 1166/2.585, 1167/2.585, 1168/3.000, 1169/4.322, 1170/1.585, 1171/3.170, 1172/1.000, 1173/4.907, 1174/1.000, 1175/5.426, 1176/1.000, 1177/4.170, 1179/3.907, 1181/4.858, 1183/4.170, 1185/3.807, 1186/1.000, 1187/4.459, 1189/4.170, 1190/1.585, 1191/3.459, 1192/1.000, 1193/4.248, 1195/2.322, 1196/1.000, 1197/2.322, 1199/3.322, 1201/1.585, 1202/1.000, 1203/2.807, 1204/1.000, 1205/3.000, 1207/3.807, 1208/1.585, 1209/1.000, 1211/4.524, 1213/3.000, 1214/1.000, 1215/1.585, 1216/1.585, 1217/1.585, 1219/2.322, 1220/2.585, 1222/1.000, 1223/4.087, 1225/2.000, 1226/2.000, 1227/1.585, 1228/1.000, 1229/1.585, 1231/4.392, 1232/1.000, 1235/2.000, 1238/1.000, 1239/1.000, 1240/2.322, 1241/1.000, 1243/3.322, 1244/1.000, 1245/1.000, 1249/1.000, 1255/1.585, 1258/1.000, 1259/1.000, 1262/1.000, 1264/1.000, 1267/1.585, 1273/1.000, 1275/1.000, 1279/3.170, 1288/1.585, 1293/1.000, 1294/1.585, 1301/1.000, 1302/1.000, 1312/1.000, 1315/1.000, 1356/1.000, 1362/1.585, 1371/2.000, 1380/1.000, 1389/1.000, 1392/1.585, 1395/2.000, 1404/1.000, 1407/1.000, 1413/1.000, 1419/1.000, 1425/1.000, 1431/1.000, 1437/1.000, 1439/1.585, 1443/1.000, 1461/1.000}
            {
                \draw[chart,draw=colY!70!white,fill=colY!50!white] (\xstep*\nacs,0) rectangle (\xstep*\nacs+\xstep,\graphs);
            }
            
            \foreach \nacs/\graphs [count=\i] in {0/11.08, 1/14.52, 2/14.08, 3/16.42, 4/15.38, 5/15.59, 6/15.95, 7/17.07, 8/16.00, 9/16.14, 10/15.49, 11/15.89, 12/15.93, 13/16.41, 14/15.35, 15/16.98, 16/15.24, 17/15.85, 18/15.28, 19/15.50, 20/14.95, 21/15.40, 22/14.45, 23/15.44, 24/14.76, 25/15.65, 26/14.36, 27/15.74, 28/13.92, 29/14.55, 30/14.10, 31/15.95, 32/13.98, 33/14.12, 34/13.94, 35/14.46, 36/13.28, 37/14.16, 38/13.16, 39/13.95, 40/13.24, 41/13.70, 42/12.59, 43/13.86, 44/12.75, 45/13.51, 46/12.83, 47/13.84, 48/12.28, 49/13.42, 50/12.57, 51/14.09, 52/11.86, 53/12.71, 54/12.26, 55/13.82, 56/11.98, 57/12.14, 58/11.72, 59/12.47, 60/11.56, 61/12.41, 62/11.29, 63/14.19, 64/11.03, 65/12.24, 66/11.30, 67/11.89, 68/10.96, 69/12.09, 70/10.98, 71/11.78, 72/10.76, 73/11.41, 74/10.59, 75/11.87, 76/10.40, 77/11.20, 78/10.45, 79/11.40, 80/10.13, 81/11.05, 82/10.28, 83/11.20, 84/10.19, 85/10.87, 86/10.21, 87/11.40, 88/9.642, 89/11.05, 90/10.08, 91/11.43, 92/9.320, 93/10.72, 94/9.911, 95/11.48, 96/8.986, 97/10.00, 98/9.468, 99/11.03, 100/9.127, 101/10.43, 102/9.377, 103/11.71, 104/8.886, 105/9.587, 106/8.977, 107/9.771, 108/9.326, 109/10.18, 110/8.607, 111/11.01, 112/8.516, 113/10.00, 114/8.409, 115/9.301, 116/8.562, 117/9.401, 118/8.741, 119/9.371, 120/8.285, 121/9.478, 122/8.124, 123/9.944, 124/8.285, 125/8.817, 126/8.098, 127/12.29, 128/7.845, 129/8.895, 130/8.224, 131/9.405, 132/7.340, 133/8.731, 134/7.801, 135/8.839, 136/7.827, 137/8.257, 138/7.285, 139/9.236, 140/7.516, 141/8.061, 142/7.741, 143/8.539, 144/7.285, 145/8.322, 146/7.484, 147/8.335, 148/7.418, 149/7.931, 150/7.600, 151/9.006, 152/7.219, 153/7.931, 154/7.375, 155/8.224, 156/6.615, 157/7.748, 158/6.644, 159/8.480, 160/7.229, 161/7.895, 162/7.033, 163/8.114, 164/7.109, 165/7.989, 166/6.585, 167/8.109, 168/6.687, 169/7.877, 170/6.170, 171/8.285, 172/6.209, 173/8.271, 174/6.492, 175/8.755, 176/6.129, 177/7.109, 178/6.700, 179/8.500, 180/6.539, 181/7.741, 182/5.931, 183/9.066, 184/5.954, 185/6.459, 186/6.555, 187/8.109, 188/6.508, 189/7.539, 190/5.807, 191/8.791, 192/6.267, 193/6.492, 194/5.977, 195/7.267, 196/5.700, 197/6.768, 198/6.570, 199/8.640, 200/5.524, 201/7.468, 202/5.426, 203/6.658, 204/6.476, 205/7.119, 206/5.129, 207/9.172, 208/5.585, 209/6.687, 210/4.954, 211/6.570, 212/5.209, 213/6.687, 214/5.555, 215/6.267, 216/5.755, 217/7.257, 218/5.170, 219/7.366, 220/4.954, 221/6.109, 222/5.459, 223/7.637, 224/4.585, 225/6.392, 226/4.807, 227/6.870, 228/5.087, 229/5.129, 230/4.700, 231/5.977, 232/4.087, 233/5.170, 234/5.170, 235/5.833, 236/4.459, 237/6.340, 238/4.000, 239/5.392, 240/4.322, 241/5.358, 242/3.700, 243/5.524, 244/4.459, 245/5.358, 246/4.524, 247/6.129, 248/3.322, 249/5.322, 250/3.807, 251/5.459, 252/4.170, 253/5.129, 254/4.322, 255/10.24, 256/3.907, 257/5.322, 258/4.087, 259/5.044, 260/3.585, 261/4.322, 262/3.907, 263/5.392, 264/3.459, 265/4.000, 266/3.170, 267/5.285, 268/3.000, 269/4.755, 270/4.000, 271/4.585, 272/3.459, 273/4.322, 274/3.807, 275/4.087, 276/3.170, 277/4.459, 278/4.170, 279/4.585, 280/4.392, 281/3.700, 282/4.087, 283/4.459, 284/4.248, 285/4.807, 286/4.087, 287/3.585, 288/3.322, 289/3.459, 290/3.907, 291/5.492, 292/3.585, 293/4.459, 294/3.907, 295/3.807, 296/3.170, 297/3.700, 298/2.807, 299/3.807, 300/2.807, 301/5.285, 302/4.459, 303/4.322, 304/3.807, 305/4.524, 306/4.392, 307/4.087, 308/3.000, 309/5.209, 310/3.585, 311/4.087, 312/2.807, 313/3.807, 314/2.807, 315/5.000, 316/3.170, 317/4.087, 318/3.000, 319/4.392, 320/3.585, 321/4.700, 322/3.000, 323/3.907, 324/3.170, 325/4.585, 326/3.000, 327/4.459, 328/2.585, 329/4.585, 330/2.000, 331/5.358, 332/3.585, 333/3.807, 334/3.000, 335/4.392, 336/1.000, 337/3.459, 338/2.807, 339/3.585, 340/3.170, 341/3.700, 342/3.000, 343/4.755, 344/2.322, 345/3.170, 346/3.585, 347/5.755, 348/3.170, 349/3.322, 350/3.322, 351/4.248, 352/2.585, 353/4.087, 354/2.585, 355/3.000, 356/3.322, 357/2.807, 358/4.248, 359/4.000, 360/3.585, 361/3.807, 362/2.807, 363/5.170, 364/3.700, 365/3.700, 366/2.000, 367/3.700, 368/2.322, 369/3.459, 370/2.585, 371/3.170, 372/2.807, 373/4.700, 374/2.585, 375/5.392, 376/2.807, 377/3.907, 378/2.585, 379/3.170, 380/2.322, 381/4.248, 382/1.585, 383/2.807, 384/3.170, 385/4.459, 386/1.000, 387/2.000, 388/3.000, 389/4.087, 390/2.322, 391/2.000, 392/1.585, 393/3.907, 395/4.392, 396/1.000, 397/3.459, 398/2.000, 399/3.459, 400/1.585, 401/3.170, 402/1.585, 403/4.907, 404/1.585, 405/4.170, 406/2.000, 407/4.087, 408/2.322, 409/3.322, 410/1.000, 411/4.170, 412/2.000, 413/3.585, 414/2.000, 415/2.807, 416/2.322, 417/3.459, 418/1.585, 419/4.700, 420/1.000, 421/2.585, 422/3.000, 423/2.322, 424/2.322, 425/3.170, 426/2.322, 427/4.644, 429/2.807, 430/2.000, 431/1.585, 432/1.585, 433/1.585, 434/2.322, 435/4.322, 436/1.585, 437/2.000, 438/1.585, 439/5.209, 440/1.585, 441/1.000, 442/1.000, 443/4.000, 444/1.585, 445/2.585, 446/1.585, 447/1.000, 448/1.585, 449/3.459, 450/3.000, 451/4.459, 452/1.000, 453/1.000, 455/5.555, 456/1.000, 459/2.000, 460/1.585, 461/2.585, 463/3.000, 464/2.000, 465/2.322, 466/1.000, 467/2.585, 468/1.000, 469/3.000, 471/3.585, 473/1.000, 474/2.585, 475/3.322, 476/1.000, 477/1.000, 478/1.585, 479/1.585, 480/1.000, 481/3.322, 482/1.585, 483/2.322, 484/1.000, 485/1.585, 487/1.000, 489/1.000, 490/1.000, 493/1.000, 494/1.585, 495/4.954, 496/1.000, 498/1.000, 500/1.000, 501/1.000, 505/1.000, 507/1.000, 509/1.585, 510/1.000, 511/9.687, 513/2.000, 514/1.585, 515/3.170, 519/2.000, 523/1.000, 527/1.000, 529/1.000, 531/1.585, 537/1.000, 541/1.000, 544/1.585, 550/1.000, 551/1.000, 553/1.000, 558/1.000, 579/1.000, 580/1.000, 583/1.000, 584/2.322, 585/1.000, 586/2.585, 587/3.000, 588/2.000, 589/1.585, 590/2.322, 591/1.585, 592/1.000, 593/1.585, 594/1.000, 601/1.000, 602/1.000, 603/1.585, 605/2.322, 606/1.000, 611/1.585, 615/2.322, 621/1.000, 639/1.000}
            {
                \draw[chart,draw=colG!70!white,fill=colG!50!white] (\xstep*\nacs,0) rectangle (\xstep*\nacs+\xstep,\graphs);
            }
        }
    \end{tikzpicture}
    
    \caption{The numbers of minimally rigid graphs on 11 (green) and 12 vertices (yellow)
    according to the number of NAC-colourings.
    The numbers were determined computationally using the code supporting~\cite{LL2024} and the generator of minimally rigid graphs \cite{Larsson}.
    }
    \label{fig:NACnumbers_all_11}
\end{figure}

\begin{figure}[ht]
    \centering%
    \begin{tikzpicture}[yscale=2.5,xscale=1.2]
        \draw[white] (-1.5,0) circle (2pt);
    \draw[white] (1.5,0) circle (2pt);
        \node[vertex] (0) at (1, 0.2) {};
        \node[vertex] (1) at (-0.5, 0.5) {};
        \node[vertex] (2) at (-0.5, -0.5) {};
        \node[vertex] (3) at (-1, 0.2) {};
        \node[vertex] (4) at (0.5, -0.5) {};
        \node[vertex] (5) at (0.5, 0.5) {};
        \node[vertex] (6) at (-0.5, 0) {};
        \node[vertex] (7) at (0.5, 0) {};
        \draw[edge](0)edge(4) (0)edge(5) (0)edge(7) (1)edge(3) (1)edge(5) (1)edge(7) (2)edge(3) (2)edge(4) (2)edge(7) (3)edge(6) (4)edge(6) (5)edge(6) (6)edge(7);
    \end{tikzpicture}\qquad
    \begin{tikzpicture}[rotate=90,xscale=2.5/3.7,yscale=1.2]
        \draw[white] (0,-1.5) circle (2pt);
    \draw[white] (0,1.5) circle (2pt);
        \node[vertex] (0) at (-0.7, 0) {};
        \node[vertex] (1) at (0, -1) {};
        \node[vertex] (2) at (0, 1) {};
        \node[vertex] (3) at (0.4, 0) {};
        \node[vertex] (4) at (3, 0.5) {};
        \node[vertex] (5) at (3, -0.5) {};
        \node[vertex] (6) at (2, 1) {};
        \node[vertex] (7) at (2, -1) {};
        \node[vertex] (8) at (1.6, 0) {};
        \draw[edge](0)edge(1) (0)edge(2) (0)edge(3) (1)edge(7) (1)edge(8) (2)edge(6) (2)edge(8) (3)edge(6) (3)edge(7) (4)edge(6) (4)edge(7) (4)edge(8) (5)edge(6) (5)edge(7) (5)edge(8);
    \end{tikzpicture}\qquad
    \begin{tikzpicture}[yscale=2.5/1.6,xscale=1.2]
        \draw[white] (-1.5,1) circle (2pt);
    \draw[white] (1.5,1) circle (2pt);
        \node[vertex] (0) at (1, 1.5) {};
        \node[vertex] (1) at (-1, 1.5) {};
        \node[vertex] (2) at (0, 1.8) {};
        \node[vertex] (3) at (-1, 0.4) {};
        \node[vertex] (4) at (-0.6, 1.4) {};
        \node[vertex] (5) at (1, 0.4) {};
        \node[vertex] (6) at (0.6, 1.4) {};
        \node[vertex] (7) at (0, 0.2) {};
        \node[vertex] (8) at (0, 1) {};
        \draw[edge](0)edge(2) (0)edge(5) (0)edge(6) (1)edge(2) (1)edge(3) (1)edge(4) (2)edge(8) (3)edge(7) (3)edge(8) (4)edge(7) (4)edge(8) (5)edge(7) (5)edge(8) (6)edge(7) (6)edge(8);
    \end{tikzpicture}

    \vspace{1cm}
    \begin{tikzpicture}[xscale=9/14]
            \node[vertex] (0) at (1.0, 0) {};
            \node[vertex] (1) at (1.0, 1.0) {};
            \node[vertex] (2) at (2.0, 0) {};
            \node[vertex] (3) at (2.0, 1.0) {};
            \node[vertex] (4) at (3.0, 0) {};
            \node[vertex] (5) at (3.0, 1.0) {};
            \node[vertex] (8) at (4.0, 0) {};
            \node[vertex] (9) at (4.0, 1.0) {};
            \node[vertex] (12) at (-0.3, -1.0) {};
            \node[vertex] (13) at (5.3, -1.0) {};
            \draw[edge] (0)edge(2) (0)edge(3) (0)edge(12) (1)edge(2) (1)edge(3);
            \draw[edge] (1)edge(12) (2)edge(4) (2)edge(5) (3)edge(4) (3)edge(5);
            \draw[edge] (8)edge(4) (8)edge(5) (9)edge(4) (9)edge(5) (8)edge(13) (9)edge(13) (12)edge(13);
        \end{tikzpicture}\qquad
            \begin{tikzpicture}[scale=0.3]
	\node[vertex] (0) at (0, 7) {};
	\node[vertex] (1) at (0, 2) {};
	\node[vertex] (2) at (-3, 2) {};
	\node[vertex] (3) at (4, 0) {};
	\node[vertex] (4) at (4, 4) {};
	\node[vertex] (5) at (6, 5) {};
	\node[vertex] (6) at (-6, 5) {};
	\node[vertex] (7) at (-4, 0) {};
	\node[vertex] (8) at (-4, 4) {};
	\node[vertex] (9) at (1, 4) {};
	\node[vertex] (10) at (1, 0) {};
	\draw[edge] (0) to (5) (0) to (6) (1) to (2) (1) to (9) (1) to (10) (2) to (7) (2) to (8) (3) to (5) (3) to (9) (3) to (10) (4) to (5) (4) to (9) (4) to (10) (6) to (7) (6) to (8) (7) to (9) (7) to (10) (8) to (9) (8) to (10);
    \end{tikzpicture}\qquad
    \begin{tikzpicture}[scale=0.3]
    \draw[white] (-6,0) circle (2pt);
    \draw[white] (6,0) circle (2pt);
    	\node[vertex] (0) at (0, 1) {};
    	\node[vertex] (1) at (-2, -2) {};
    	\node[vertex] (2) at (-3, -6) {};
    	\node[vertex] (3) at (-3, -3) {};
    	\node[vertex] (4) at (2, -2) {};
    	\node[vertex] (5) at (3, -3) {};
    	\node[vertex] (6) at (3, -6) {};
    	\node[vertex] (7) at (0, -1) {};
    	\node[vertex] (8) at (-5, -2) {};
    	\node[vertex] (9) at (5, -2) {};
    	\node[vertex] (10) at (0, -3) {};
    	\node[vertex] (11) at (0, -6) {};
    	\draw[edge] (0) to (7) (0) to (8) (0) to (9) (1) to (7) (1) to (10) (1) to (11) (2) to (8) (2) to (10) (2) to (11) (3) to (8) (3) to (10) (3) to (11) (4) to (9) (4) to (10) (4) to (11) (5) to (9) (5) to (10) (5) to (11) (6) to (9) (6) to (10) (6) to (11);
    \end{tikzpicture}
    \caption{Some graphs maximising the number of NAC-colourings among minimally rigid graphs on $8$ (top left), $9$ (top middle and right), $10$ (bottom left), $11$ (bottom middle) and $12$ vertices (bottom right). On $7$ vertices the unique graph attaining the maximum is $K_{3,3}$ with one open $0$-extension.
    }
    \label{fig:maxNACs}
\end{figure}
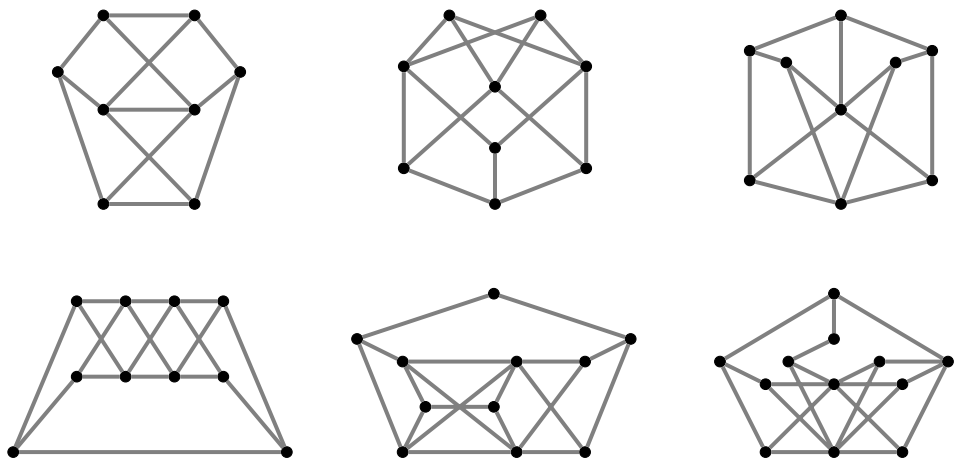

For each $k \geq 1$, we let $G_k$ be the graph with vertex set $\{x,y\} \cup \{a_1, \dots, a_k\} \cup \{b_1, \dots, b_k\}$ and edges $a_ia_{i+1},\allowbreak b_ib_{i+1},\allowbreak a_ib_{i+1},\allowbreak b_ia_{i+1}$ for all $i \in [k-1]$ and $xy,\allowbreak xa_1,\allowbreak xb_1,\allowbreak ya_1,\allowbreak yb_1$; see \Cref{fig:minRigidWithManyNACs}.
It is easy to check that $G_k$ has $2k+2$ vertices and $4k+1$ edges, and that $G_k$ is a minimally rigid $0$-extension graph.

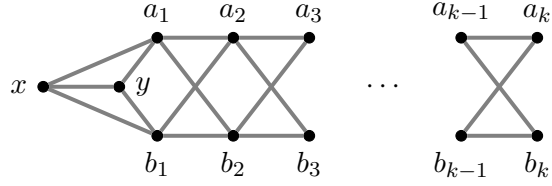
\begin{figure}[ht]
        \centering
        \begin{tikzpicture}[yscale=1.3]
            \node[vertex,label={left:$x$}] (12) at (-0.5,0.5) {};
            \node[vertex,label={right:$y$}] (13) at (0.5,0.5) {};
            \node[vertex,label={below:$b_1$}] (0) at (1.0, 0.00) {};
            \node[vertex,label={above:$a_1$}] (1) at (1.0, 1.0) {};
            \node[vertex,label={below:$b_2$}] (2) at (2.0, 0.00) {};
            \node[vertex,label={above:$a_2$}] (3) at (2.0, 1.0) {};
            \node[vertex,label={below:$b_3$}] (4) at (3.0, 0.00) {};
            \node[vertex,label={above:$a_3$}] (5) at (3.0, 1.0) {};
            \node (6) at (4, 0.5) {$\cdots$};
            \node[vertex,label={below:$b_{k-1}$}] (8) at (5.0, 0.00) {};
            \node[vertex,label={above:$a_{k-1}$}] (9) at (5.0, 1.0) {};
            \node[vertex,label={below:$b_k$}] (10) at (6.0, 0.00) {};
            \node[vertex,label={above:$a_k$}] (11) at (6.0, 1.0) {};
            \draw[edge] (0)edge(2) (0)edge(3) (1)edge(2) (1)edge(3);
            \draw[edge] (2)edge(4) (2)edge(5) (3)edge(4) (3)edge(5);
            \draw[edge] (8)edge(10) (8)edge(11) (9)edge(10) (9)edge(11);
            \draw[edge] (0)edge(12) (0)edge(13) (1)edge(12) (1)edge(13) (12)edge(13);
        \end{tikzpicture}
        \caption{The graph $G_k$.}
        \label{fig:minRigidWithManyNACs}
\end{figure}

\begin{proposition}\label{prop:minrigidNACbound}
    Let $k \geq 2$ be an integer, and let $n = 2k+2 = |V(G_k)|$, where $G_k$ is defined above. We have $\nnac{G_k} = 2^{n-4}-1$.   
\end{proposition}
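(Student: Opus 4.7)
The plan is to normalise by assuming $c(xy) = \red$ (by swap symmetry, $\nnac{G_k}$ equals the number of NAC-colourings with $c(xy) = \red$), and then to parameterise such colourings by $2(k-1)$ free binary variables, showing that the resulting $4^{k-1}$ configurations are all NAC-colourings except for the all-red one.

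First, the triangles $\{x,y,a_1\}$ and $\{x,y,b_1\}$ must be monochromatic, forcing $c(xa_1) = c(ya_1) = c(xb_1) = c(yb_1) = \red$. After identifying $\red = 0$ and $\blue = 1$ in $\mathbb{F}_2$, the NAC condition on a 4-cycle (mono or 2+2) becomes the linear constraint that the four edge colours sum to zero. I would catalogue the relevant 4-cycles: for each $i \in \{2,\dots,k\}$, the within-transition 4-cycle $a_{i-1} a_i b_{i-1} b_i$; and for each pair of consecutive transitions, the 3-level 4-cycles such as $a_{i-1} a_i a_{i+1} b_i$ (for $i \in \{2,\dots,k-1\}$) or the boundary cycles $x a_1 a_2 b_1$ and $x a_1 b_2 b_1$. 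A direct computation shows that the four 3-level 4-cycles per pair of consecutive transitions reduce modulo the within-transition constraints to a single independent relation, giving in total $2(k-1)$ independent linear constraints on the $4(k-1)$ free edge colours. Taking $x^i := c(a_{i-1} a_i)$ and $z^i := c(a_{i-1} b_i)$ for $i = 2,\dots,k$ as free parameters then provides an explicit parameterisation of the $4^{k-1}$ configurations satisfying all 4-cycle NAC-constraints.

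The hard part is verifying that each such configuration (except all-red) is a genuine NAC-colouring, not merely one satisfying the 4-cycle constraints. For this I would show that the cycle space of $G_k$ is generated over $\mathbb{F}_2$ by the triangles $xya_1, xyb_1$ and the 4-cycles of $G_k$, which follows inductively since adding the two vertices $a_i, b_i$ together with their four incident edges contributes exactly two new basis cycles (both 4-cycles) at each step. Since every basis cycle has edge-colour sum equal to zero (triangles are monochromatic red, 4-cycles satisfy the linear NAC-constraint), so does every cycle of $G_k$; equivalently, every cycle contains an even number of blue edges. Combined with the observation that any cycle through $x$ or $y$ uses at least two of the forced-red edges at these vertices, and that cycles disjoint from $\{x,y\}$ necessarily have even length, this parity condition forces the NAC condition (monochromatic, or at least two edges of each colour) for every cycle. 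The all-red configuration fails surjectivity, but every other configuration contains a blue edge, yielding $\nnac{G_k} = 4^{k-1} - 1 = 2^{n-4} - 1$, as required.
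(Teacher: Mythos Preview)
Your proof is correct and takes a genuinely different route from the paper's.

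The paper argues combinatorially: it defines a \emph{locally NAC} colouring of $G_k' = G_k \setminus \{x,y\}$ (one with no almost-monochromatic cycle in any three-level window), and proves by induction on $k$, via a case analysis on where the unique off-colour edge of a hypothetical almost-monochromatic cycle can lie, that locally NAC implies globally NAC or monochromatic. It then observes that exactly $4^{k-1}$ locally NAC colourings satisfy the boundary condition $c(a_1a_2)=c(b_1a_2)$, and each but the monochromatic one extends to a NAC-colouring of $G_k$.

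Your approach linearises the problem over $\mathbb{F}_2$: the NAC condition on triangles and $4$-cycles becomes the single linear equation ``edge-colour sum is zero'', and you show that the cycle space of $G_k$ is generated by the two triangles and the (within-transition, three-level, boundary) $4$-cycles, so this parity condition propagates to every cycle. The passage from ``even number of blue edges on every cycle'' to the full NAC condition is then handled by two structural facts you identify cleanly: every cycle through $x$ or $y$ picks up at least two forced-red edges (ruling out almost-red), and $G_k'$ is bipartite so its cycles have even length (ruling out almost-red there too). This sidesteps the paper's inductive case analysis entirely and makes the $4^{k-1}$ count transparent as the size of an affine subspace. The paper's argument, by contrast, stays closer to the definition and implicitly reveals a transfer-matrix structure (the four transition types in their Figure~6) that might adapt more readily to variants of $G_k$; your $\mathbb{F}_2$-linearisation is tighter but leans on the bipartiteness of $G_k'$, which is specific to this construction.
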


\begin{proof}
It is easy to verify the formula for $k=2$, so we henceforth assume $k\geq 3$.
    Let $G_k':=G_k \setminus \{x,y\}$.
    We say that a colouring $c\colon E(G_k') \to \{\red, \blue\}$ is \emph{locally NAC}
    if $c$ restricted to $G_k[\{a_i, a_{i+1}, a_{i+2}, b_i, b_{i+1}, b_{i+2} \}]$ has no almost-monochromatic cycle for all $i \in [k-2]$. Note that this requires an even number of $a_ia_{i+1}$, $a_ib_{i+1}$, $b_ia_{i+1}$ and $b_ib_{i+1}$ to have each colour for each $i<k$, and that for each such colouring of these four edges, there are $4$ ways to colour the next four edges so that $G_k[\{a_i, a_{i+1}, a_{i+2}, b_i, b_{i+1}, b_{i+2} \}]$ has no almost-monochromatic cycle
    (see \Cref{fig:localNAC}).

\begin{figure}[ht]
    \centering
        \begin{tikzpicture}
        \begin{scope}
            \node[vertex] (a1) at (0, 1) {};
            \node[vertex] (a2) at (1, 1) {};
            \node[vertex] (b1) at (0, 0) {};
            \node[vertex] (b2) at (1, 0) {};
            \draw[bedge] (a1)edge(a2) (a1)edge(b2) (b1)edge(b2) (b1)edge(a2);
            \node (A) at (1.5,0.5) {or};
            \coordinate (A2) at (1.5,-0.5);
            \draw [edge,->] (-0.5,0.45) to[out=-120, in=-90,looseness=1.5] (-1.5,0.5) to[out=90, in=120,looseness=1.5] (-0.5,0.55);
            \begin{scope}[xshift=2cm]
                \node[vertex] (a1) at (0, 1) {};
                \node[vertex] (a2) at (1, 1) {};
                \node[vertex] (b1) at (0, 0) {};
                \node[vertex] (b2) at (1, 0) {};
                \draw[redge] (a1)edge(a2) (a1)edge(b2) (b1)edge(b2) (b1)edge(a2);
                \coordinate (A1) at (1.5,0.5);
            \end{scope}
        \end{scope}
        \begin{scope}[xshift=5cm]
            \node[vertex] (a1) at (0, 1) {};
            \node[vertex] (a2) at (1, 1) {};
            \node[vertex] (b1) at (0, 0) {};
            \node[vertex] (b2) at (1, 0) {};
            \draw[bedge] (a1)edge(b2) (b1)edge(b2);
            \draw[redge] (a1)edge(a2) (b1)edge(a2);
            \node (B) at (1.5,0.5) {or};
            \coordinate (B1) at (-0.5,0.5);
            \coordinate (B2) at (1.5,-0.5);
            \coordinate (B3) at (-0.5,-0.5);
            \begin{scope}[xshift=2cm]
                \node[vertex] (a1) at (0, 1) {};
                \node[vertex] (a2) at (1, 1) {};
                \node[vertex] (b1) at (0, 0) {};
                \node[vertex] (b2) at (1, 0) {};
                \draw[redge] (a1)edge(b2) (b1)edge(b2);
                \draw[bedge] (a1)edge(a2) (b1)edge(a2);
            \end{scope}
        \end{scope}
        \begin{scope}[yshift=-3cm]
            \node[vertex] (a1) at (0, 1) {};
            \node[vertex] (a2) at (1, 1) {};
            \node[vertex] (b1) at (0, 0) {};
            \node[vertex] (b2) at (1, 0) {};
            \draw[bedge] (a1)edge(a2) (a1)edge(b2);
            \draw[redge] (b1)edge(b2) (b1)edge(a2);
            \coordinate (D2) at (1.5,1.5);
            \node (D) at (1.5,0.5) {or};
            \begin{scope}[xshift=2cm]
                \node[vertex] (a1) at (0, 1) {};
                \node[vertex] (a2) at (1, 1) {};
                \node[vertex] (b1) at (0, 0) {};
                \node[vertex] (b2) at (1, 0) {};
                \coordinate (D1) at (1.5,0.5);
                \coordinate (D3) at (1.5,1.5);
            \draw[redge] (a1)edge(a2) (a1)edge(b2);
            \draw[bedge] (b1)edge(b2) (b1)edge(a2);
            \end{scope}
        \end{scope}
        \begin{scope}[xshift=5cm,yshift=-3cm]
            \node[vertex] (a1) at (0, 1) {};
            \node[vertex] (a2) at (1, 1) {};
            \node[vertex] (b1) at (0, 0) {};
            \node[vertex] (b2) at (1, 0) {};
            \draw[bedge] (a1)edge(a2) (b1)edge(b2);
            \draw[redge] (a1)edge(b2) (b1)edge(a2);
            \coordinate (C1) at (-0.5,0.5);
            \coordinate (C2) at (1.5,1.5);
            \node (C) at (1.5,0.5) {or};
            \begin{scope}[xshift=2cm]
                \node[vertex] (a1) at (0, 1) {};
                \node[vertex] (a2) at (1, 1) {};
                \node[vertex] (b1) at (0, 0) {};
                \node[vertex] (b2) at (1, 0) {};
                \draw[redge] (a1)edge(a2) (b1)edge(b2);
                \draw[bedge] (a1)edge(b2) (b1)edge(a2);
                \draw [edge,->] (1.5,0.55) to[out=60, in=90,looseness=1.5] (2.5,0.5) to[out=-90, in=-60,looseness=1.5] (1.5,0.45);
            \end{scope}
        \end{scope}
        \draw[edge,->] (A1) to (B1);
        \draw[edge,->] (D2) to (A2);
        \draw[edge,->] (B2) to (C2);
        \draw[edge,->] (C1) to (D1);
        \draw[edge,<->] (B3) to (D3);
    \end{tikzpicture}
    \caption{The diagram shows how two consecutive parts of the graph $G'_k$ can be coloured.}
    \label{fig:localNAC}
\end{figure}
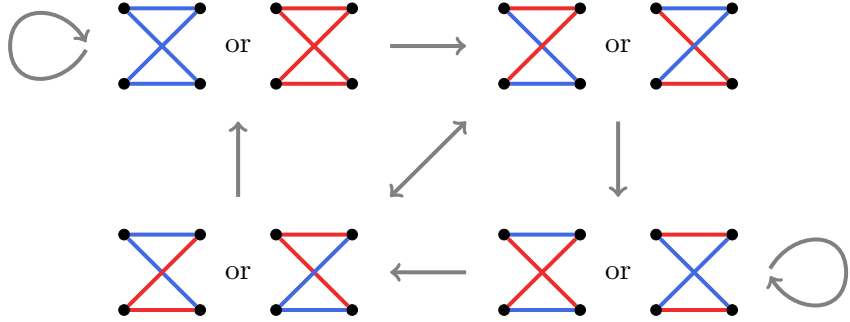
    
    \begin{claim}\label{claim:local}
        Every locally NAC-colouring of $G_k'$ is a NAC-colouring of $G_k'$ or monochromatic.
    \end{claim}
    \begin{proof}[Proof of \Cref{claim:local}]
        We proceed by induction on $k$.  For $k=3$ a colouring is locally NAC if and only if it has no almost-monochromatic cycle. Assume $k \geq 4$. Towards a contradiction, suppose $G_k'$ has a locally NAC-colouring $c'$ which is neither monochromatic nor a NAC-colouring, i.e.\ it has an almost-monochromatic cycle $C$. By induction, $G_k\setminus\{a_1,b_1\}$ and $G_k\setminus\{a_k,b_k\}$ have no such cycle, and so $C$ contains
        a vertex from $\{a_1,b_1\}$ and from $\{a_k,b_k\}$. Without loss of generality, we may assume $C$ contains a unique red edge $e$ and (by exchanging labels on each pair $a_i,b_i$ if necessary) is the cycle $a_1a_2\cdots a_kb_{k-1}b_{k-2}\cdots b_2a_1$.
        
        Suppose $e \in \{a_1a_2, a_1b_2\}$.  Then  $a_ka_{k-1}$, $a_kb_{k-1}$ and $b_{k-2}b_{k-1}$ are all blue. Since $c'$ is locally NAC, $b_{k-2}a_{k-1}$ must also be blue. But this gives an almost-blue cycle $a_1a_2\cdots a_{k-1}b_{k-2}\cdots b_2a_1$ avoiding $a_k$, which is a contradiction. Similarly if $e \in \{a_{k-1}a_k, b_{k-1}b_k\}$ we obtain a contradiction.
        
        Thus $e=a_ia_{i+1}$ or $e=b_ib_{i+1}$ for some $2\leq i\leq k-2$. Consider the two edges $a_ib_{i+1}$ and~$b_ia_{i+1}$. Both are chords of $C$, and if either is blue then one of the two cycles formed by adding that edge to $C$ is almost-blue and avoids $a_1$ or $a_k$, a contradiction. So both are red, but then $a_ia_{i+1}b_ib_{i+1}a_i$ is almost-red, and $c'$ is not a locally NAC-colouring.
    \end{proof}
    
    Consider all locally NAC-colourings $c$ of $G_k'$
    such that $c(a_1a_2)=c(b_1a_2)$ (and hence also $c(a_1b_2)=c(b_1b_2)$).
    Note that there are $4^{k-1}$ of them and each of them except for the whole blue colouring
    extends to a NAC-colouring of $G_k$ by colouring the edges incident to $x$ or $y$ blue.
    
    Since every NAC-colouring of $G_k$ restricts to a locally NAC-colouring $c$ of $G_k'$ with $c(a_1a_2)=c(b_1a_2)$,
    $\nnac{G_k} = 4^{k-1} - 1 =2^{2k-2}-1=2^{|V(G_k)|-4}-1$. 
\end{proof}

We next show that there are minimally rigid graphs with significantly more NAC-colourings. One way to prove this is to start from $G_k$, for sufficiently large $k$, and perform $0$-extensions on $(a_1,a_{r+1})$, $(a_{r+1},a_{2r+1})$, and so on, where $r$ is some fixed integer. It is not hard to show, given the classification of NAC-colourings of $G_k$, that provided $r$ is not too small, these $0$-extensions each increase the number of NAC-colourings by a factor greater than $2$. Since the number of $0$-extensions is a constant proportion of the number of vertices, this gives an exponential lower bound where the base of the exponent is strictly greater than $2$. However, the largest value we were able to obtain by this method is still significantly lower than the one we obtain below using an entirely different method.

\begin{lemma}\label{lem:gluing}
    Let $G$ be a graph obtained by gluing $k$ copies of a graph $H$ along an edge.
    Then
    \[
        \nnac{G} = (\nnac{H} + 1)^k - 1 = (\nnac{H} + 1)^\frac{|V(G)|-2}{|V(H)|-2} - 1\,.
    \]
\end{lemma}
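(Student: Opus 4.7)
The plan is to reduce counting NAC-colourings on $G$ to counting on each copy of $H$ via the following characterisation: a surjective edge colouring $c \colon E(G) \to \{\red, \blue\}$ is a NAC-colouring of $G$ if and only if, for each of the $k$ copies $H_1,\dots,H_k$ of $H$, the restriction $c_i := c|_{E(H_i)}$ is either monochromatic or a NAC-colouring of $H_i$. Let $e = uv$ denote the shared edge.

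For the forward direction, any cycle of $H_i$ is also a cycle of $G$, so if $c$ is NAC then $c_i$ has no almost-monochromatic cycle, hence is monochromatic or a NAC-colouring. For the converse, every cycle of $G$ either lies inside a single copy $H_i$, which is handled by the hypothesis, or it decomposes into two internally disjoint $u$--$v$ paths $P_i$ and $P_j$ contained in $H_i - e$ and $H_j - e$ respectively, with $i \neq j$. Assume without loss of generality that $c(e) = \red$. Applying the NAC condition on $H_i$ to the cycle $P_i \cup \{e\}$ shows that either $P_i$ is entirely red, or $P_i$ has at least two blue edges and at least one red edge, and analogously for $P_j$. A short case analysis on the four combinations then shows that $P_i \cup P_j$ is monochromatic red or contains at least two edges of each colour, as required.

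Given the characterisation, I would count as follows. Fix the colour of $e$ to be red. Since swapping colours is a fixed-point-free involution on the set of NAC-colourings of $H$, exactly $\nnac{H}$ of them colour $e$ red. Adding the all-red monochromatic colouring yields $\nnac{H} + 1$ valid choices for each restriction $c_i$ compatible with $c(e) = \red$, for a total of $(\nnac{H} + 1)^k$ colourings of $G$. Exactly one of these, the all-red colouring, fails to be surjective, leaving $(\nnac{H} + 1)^k - 1$ NAC-colourings of $G$ with $e$ red. Multiplying by $2$ for the choice of colour on $e$ and dividing by $2$ as in the definition of $\nnac{\cdot}$ gives $\nnac{G} = (\nnac{H}+1)^k - 1$. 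The second equality follows from $|V(G)| = 2 + k(|V(H)| - 2)$, since each copy beyond the first contributes exactly $|V(H)| - 2$ new vertices.

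The main obstacle is the backward direction of the characterisation: although the picture is natural, one has to carefully verify all four combinations of restriction types (mono/mono, mono/NAC, NAC/mono, NAC/NAC) to confirm that a cycle spanning two copies satisfies the NAC condition. The remaining steps are essentially bookkeeping.
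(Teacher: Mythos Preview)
Your proof is correct and follows essentially the same approach as the paper: both establish the same characterisation of NAC-colourings of $G$ via the restrictions to each copy, and both count by fixing the colour of the shared edge. The only difference is cosmetic---where you spell out the four-case analysis for a cycle spanning two copies, the paper dispatches this in one line by observing that any such cycle has $e$ as a chord, so it splits into two cycles each lying in a single copy, which immediately rules out almost-monochromaticity.
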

\begin{proof}
    We fix the colour of the common edge $e$ to be blue.
    Every NAC-colouring of $G$ with $e$ being blue restricted to any copy of $H$ is a NAC-colouring or monochromatic.
    On the other hand, every surjective colouring of $G$ such that each copy of $H$ is monochromatic or a NAC-colouring is a NAC-colouring of $G$,
    since every cycle is within a copy of $H$ or has $e$ as a chord.
    Therefore, the first equality holds. The second follows from $|V(G)|=k(|V(H)|-2)+2$.
\end{proof}
Since gluing two minimally rigid graphs along an edge gives a minimally rigid graph, by taking $H$ to be the minimally rigid graph in \Cref{fig:18vert}, and noting that $\sqrt[16]{180\,608}>2.13$,
we get the following.
\begin{corollary}\label{cor:2+}
    There is an infinite family of minimally rigid graphs $H_k$ such that $\nnac{H_k} = \omega\left(2.13^{|V(H_k)|}\right)$.
\end{corollary}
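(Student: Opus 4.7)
The plan is to apply Lemma \ref{lem:gluing} to a carefully chosen base graph $H$ and let $H_k$ be the graph obtained by gluing $k$ copies of $H$ along a common edge. The first step is to verify that each $H_k$ is minimally rigid, which follows from two standard facts in rigidity theory: gluing two rigid graphs along two or more shared vertices preserves rigidity, and the edge counts combine as $(2n_1-3)+(2n_2-3)-1 = 2(n_1+n_2-2)-3$, matching Maxwell's count. Together these say that gluing minimally rigid graphs along an edge yields a minimally rigid graph.

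With minimal rigidity established, I would invoke Lemma \ref{lem:gluing} directly to obtain
\[
    \nnac{H_k} = (\nnac{H}+1)^k - 1
    \qquad\text{and}\qquad
    |V(H_k)| = k(|V(H)|-2) + 2.
\]
Taking $H$ to be the $18$-vertex minimally rigid graph displayed in Figure \ref{fig:18vert}, for which $\nnac{H} = 180607$, these formulas become $\nnac{H_k}+1 = 180608^{k}$ and $|V(H_k)| = 16k+2$. Since $\sqrt[16]{180608} > 2.13$, I would estimate
\[
    \nnac{H_k} + 1 = \bigl(180608^{1/16}\bigr)^{16k} > 2.13^{16k} = 2.13^{|V(H_k)|-2},
\]
and then observe that because $180608^{1/16}/2.13 > 1$ is a fixed constant greater than~$1$, the ratio $\nnac{H_k}/2.13^{|V(H_k)|}$ tends to infinity with $k$, which is precisely the conclusion $\nnac{H_k} = \omega\bigl(2.13^{|V(H_k)|}\bigr)$.

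The only nontrivial obstacle is verifying the two properties of the specific base graph~$H$: that it is minimally rigid and that it admits exactly $180607$ NAC-colourings (up to swapping colours). Minimal rigidity is routine to check using the Laman/Pollaczek-Geiringer characterisation, while the NAC-count does not seem amenable to a hand proof and would be certified computationally, for example via the FlexRiLoG package \cite{FlexRiLoGPaper} or the code supporting \cite{LL2024} that is already used elsewhere in this section. The conceptual content of the argument is that Lemma \ref{lem:gluing} boot-straps any single minimally rigid ``gadget'' $H$ with $(\nnac{H}+1)^{1/(|V(H)|-2)} > 2.13$ into an infinite family achieving the desired asymptotic lower bound, so the task reduces to exhibiting one sufficiently good example.
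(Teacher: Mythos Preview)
Your proposal is correct and follows essentially the same approach as the paper: the paper's proof is simply to note that gluing minimally rigid graphs along an edge preserves minimal rigidity, take $H$ to be the $18$-vertex graph of Figure~\ref{fig:18vert} with $\nnac{H}=180607$, and apply Lemma~\ref{lem:gluing} together with $\sqrt[16]{180608}>2.13$. You have supplied slightly more detail on the edge-count verification and the asymptotics, but the argument is the same.
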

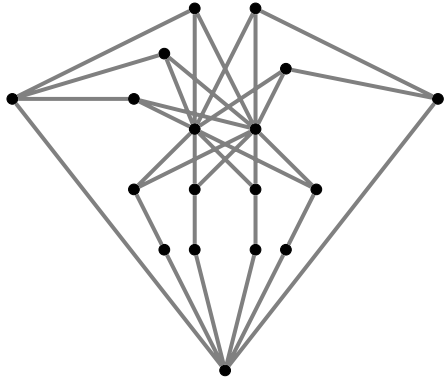
\begin{figure}[ht]
    \centering
    \begin{tikzpicture}[scale=0.4]
    	\node[vertex] (0) at (1, 5) {};
    	\node[vertex] (1) at (-1, 1) {};
    	\node[vertex] (2) at (-2, 3.5) {};
    	\node[vertex] (3) at (-7, 2) {};
    	\node[vertex] (4) at (-1, 5) {};
    	\node[vertex] (5) at (1, 1) {};
    	\node[vertex] (6) at (1, -1) {};
    	\node[vertex] (7) at (1, -3) {};
    	\node[vertex] (8) at (0, -7) {};
    	\node[vertex] (9) at (7, 2) {};
    	\node[vertex] (10) at (2, 3) {};
    	\node[vertex] (11) at (-3, 2) {};
    	\node[vertex] (12) at (-1, -1) {};
    	\node[vertex] (13) at (-1, -3) {};
    	\node[vertex] (14) at (-3, -1) {};
    	\node[vertex] (15) at (-2, -3) {};
    	\node[vertex] (16) at (3, -1) {};
    	\node[vertex] (17) at (2, -3) {};
    	\draw[edge] (0) to (1) (0) to (5) (0) to (9) (1) to (2) (1) to (4) (1) to (6) (1) to (10) (1) to (11) (1) to (12) (1) to (14) (1) to (16) (2) to (3) (2) to (5) (3) to (4) (3) to (8) (3) to (11) (4) to (5) (5) to (6) (5) to (10) (5) to (11) (5) to (12) (5) to (14) (5) to (16) (6) to (7) (7) to (8) (8) to (9) (8) to (13) (8) to (15) (8) to (17) (9) to (10) (12) to (13) (14) to (15) (16) to (17);
    \end{tikzpicture}
    \caption{An $18$-vertex minimally-rigid graph $H$ with $\nnac{H}=180\,607$ (computed using~\cite{LL2024}),
    which can be obtained by $0$-extensions from the $12$-vertex graph in \Cref{fig:maxNACs}.}
    \label{fig:18vert}
\end{figure}

\subsection{Flexible graphs}

To close this section, we consider the number of NAC-colourings in flexible graphs. Note that a flexible graph clearly has a flexible quasi-injective realisation, and hence by \Cref{thm:NACiffflex}, such a graph always has at least one NAC-colouring. We can use \Cref{prop:flexibleHasStableSeparator} to derive a stronger bound. 

\begin{lemma}\label{lem:logstableseps}
    Let $G$ be a $2$-connected flexible graph with $\ell$ rigid components.  Then $G$ has at least  $\lceil \log_2 \ell \rceil$ stable separations, and hence $\nnac{G} \geq \lceil \log_2 \ell \rceil$.
\end{lemma}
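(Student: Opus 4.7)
The plan is to use the stable separations provided by \Cref{prop:flexibleHasStableSeparator} as binary ``addresses'' for the rigid components of $G$. Call a stable separation $\{G_1,G_2\}$ \emph{nice} if $|V(R)\cap V(G_1)\cap V(G_2)|\leq 1$ for every rigid component $R$; the proposition produces such separations. First I would show that any nice stable separation partitions the rigid components, in the sense that each rigid component has all of its edges in $E(G_1)$ or all in $E(G_2)$. This is immediate for rigid components that are single edges, and for rigid components on at least three vertices it follows from the fact that such components are $2$-connected (otherwise pivoting around a cut vertex would yield a nontrivial flex): a nice separation meets such a component in at most one vertex, so its remaining vertices cannot lie on both sides.

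The key step is to prove that any two distinct rigid components $R_i, R_j$ are separated by some nice stable separation. By \Cref{prop:flexibleHasStableSeparator} it is enough to exhibit $u\in V(R_i)$ and $v\in V(R_j)$ such that no rigid component contains both. Suppose for contradiction that no such pair exists. Since rigid components pairwise share at most one vertex, the ``common'' rigid component $R_{uv}$ for each bad pair $(u,v)$ meets $R_i$ precisely in $\{u\}$ and $R_j$ precisely in $\{v\}$. A short case analysis then always produces three rigid subgraphs from $\{R_i, R_j\}$ and the $R_{uv}$'s whose pairwise intersections are three distinct single vertices: if $|V(R_j)|\geq 3$, take $R_j, R_{uv_1}, R_{uv_2}$ for a fixed $u \in V(R_i) \setminus V(R_j)$ and distinct $v_1, v_2 \in V(R_j) \setminus V(R_i)$; if $R_i$ and $R_j$ are single edges sharing a vertex $w$, take $R_i, R_j, R_{ab}$ where $a,b$ are the non-$w$ endpoints; and if they are disjoint single edges $\{a_1,a_2\}$ and $\{b_1,b_2\}$, take $R_j, R_{a_1 b_1}, R_{a_1 b_2}$. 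I would then invoke the standard rigidity fact that the union of three rigid subgraphs meeting pairwise in three distinct single vertices is rigid (justified mechanically by the DOF count $3\cdot 3 - 2\cdot 3 = 3$, matching the three DOFs of a rigid body in the plane, or combinatorially by checking that the union inherits the Laman edge-count condition). This forces the three subgraphs to lie inside a single rigid component of $G$, contradicting their distinctness.

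Given these two facts, the map assigning each rigid component its side-vector in $\{0,1\}^k$, where $k$ is the number of nice stable separations of $G$, is injective. Hence $\ell \leq 2^k$, and so $k \geq \lceil \log_2 \ell \rceil$. Since each nice stable separation is in particular a stable separation, and stable separations correspond to NAP-colourings up to colour swap by \Cref{lem:NAPiffStableCut}, and every NAP-colouring is a NAC-colouring, this yields both the bound on the number of stable separations and $\nnac{G} \geq k \geq \lceil \log_2 \ell \rceil$.

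The main obstacle I expect is the case analysis in the second paragraph: making sure that the three rigid subgraphs with three distinct single-vertex pairwise intersections can always be extracted, especially when $R_i$ and $R_j$ are both single edges. After that, the three-way gluing rigidity fact is standard and the injection argument is routine.
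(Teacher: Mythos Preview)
Your approach is correct and essentially the same as the paper's: the paper also restricts to what it calls \emph{respectful} separations (your ``nice'' ones), observes each rigid component lands entirely on one side, and invokes \Cref{prop:flexibleHasStableSeparator} to separate every pair of rigid components. The only cosmetic difference is that the paper packages the final counting step as a reduction to the biclique covering number of $K_\ell$ (which equals $\lceil\log_2\ell\rceil$), whereas you carry out the equivalent direct injection into $\{0,1\}^k$; your case analysis for producing $u\in V(R_i)$, $v\in V(R_j)$ with no common rigid component in fact spells out a step that the paper asserts without justification.
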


In the proof we will use the \emph{biclique covering number}; this is the minimum number of complete bipartite subgraphs needed to cover the edge set of a graph.

\begin{proof}
    Let $R_1, \dots, R_\ell$ be the rigid components of $G$. We say that a stable separation $\{G_1, G_2\}$ is \emph{respectful} if $V(G_1 \cap G_2)$ contains at most one vertex from each $R_i$.  Note that each $R_i$ is either $2$-connected or consists of a single edge, and hence if $\{G_1, G_2\}$ is respectful, then each $R_i$ is contained in either $G_1$ or $G_2$. 
    
    Let $K_\ell$ be the complete graph with vertex set $[\ell]$. We say that $ij \in E(K_\ell)$ is \emph{covered} by a respectful separation $\{G_1, G_2\}$ if $R_i \subseteq G_1$ and $R_j \subseteq G_2$. The edges covered by $\{G_1,G_2\}$ induce a complete bipartite subgraph of $K_\ell$.  By \Cref{prop:flexibleHasStableSeparator}, every $ij \in E(K_\ell)$ is covered by a respectful separation.  Therefore, the number of respectful separations of $G$ is at least the biclique covering number of $K_\ell$. It is well-known that this number is $\lceil \log_2 \ell \rceil$ (see~\cite{FH96} for a proof). 
    
    Finally, the bound on $\nnac{G}$ follows from the one-to-one correspondence between stable separations and NAP-colourings given by \Cref{lem:NAPiffStableCut}. 
\end{proof}

For flexible graphs with few rigid components, we can deduce the following improved bound on the number of NAC-colourings. In the proof, we will use the well-known fact that if $R_1,R_2,R_3$ are distinct, pairwise intersecting rigid components of a graph $G$, then they must intersect at a single common vertex. It follows that a $2$-connected flexible graph has at least four rigid components. 

\begin{lemma}\label{lem:flexibleNAC3}
    Let $G$ be a $2$-connected flexible graph. Then $\nnac{G}\geq 3$.
\end{lemma}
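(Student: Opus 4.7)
The plan is to apply \Cref{lem:logstableseps} whenever there are at least five rigid components and, for the boundary case of exactly four, to use $2$-connectivity to pin down the structure of the rigid-component intersection graph and then construct three explicit NAC-colourings.

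Let $\ell$ denote the number of rigid components of $G$. As noted just before the lemma, $\ell \geq 4$. If $\ell \geq 5$, then \Cref{lem:logstableseps} gives $\nnac{G} \geq \lceil \log_2 \ell \rceil \geq 3$ and we are done, so from here on I would assume $\ell = 4$ with rigid components $R_1, R_2, R_3, R_4$.

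I would then analyse the intersection graph $I$ on $\{R_1, R_2, R_3, R_4\}$, in which $R_iR_j$ is an edge whenever $R_i \cap R_j \neq \varnothing$. Using that distinct rigid components share at most one vertex, and that three pairwise intersecting ones share a single common vertex, the plan is to examine the six connected graphs on four vertices ($P_4$, $K_{1,3}$, the paw, $C_4$, $K_4 - e$, $K_4$) and rule out every option except $C_4$. In each excluded case, some shared vertex becomes a cut vertex of $G$, contradicting $2$-connectivity: for $P_4$ and $K_{1,3}$ a leaf rigid component attaches to the rest through a single shared vertex; whenever $I$ contains a triangle, the three triangle-components meet at a vertex whose removal isolates the remaining rigid component; and in $K_4 - e$ and $K_4$ the multiple triangles combine to force all four rigid components to meet at a single common vertex. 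This leaves $I = C_4$, say $R_1R_2R_3R_4R_1$, and I would write $v_{i,i+1}$ for the unique vertex in $R_i \cap R_{i+1}$ (with indices taken modulo $4$).

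Finally, I would define three edge $2$-colourings $c_A, c_B, c_C$ of $G$ by colouring each rigid component monochromatically according to one of the three balanced bipartitions of $\{R_1, R_2, R_3, R_4\}$:
\begin{align*}
    c_A&\colon R_1, R_2 \mapsto \red,\ R_3, R_4 \mapsto \blue,\\
    c_B&\colon R_1, R_3 \mapsto \red,\ R_2, R_4 \mapsto \blue,\\
    c_C&\colon R_1, R_4 \mapsto \red,\ R_2, R_3 \mapsto \blue.
\end{align*}
These are three distinct colourings modulo swapping colours. To check that each is a NAC-colouring, I would analyse simple cycles of $G$: such a cycle either lies inside a single rigid component (and is monochromatic) or passes through several rigid components, transitioning between them at shared vertices. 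The key observation is that two rigid components sharing only one vertex cannot jointly carry a simple cycle, since that cycle would have to visit the shared vertex twice. So the sequence of rigid components traversed by a simple cycle corresponds to a cycle in $I$, which must be the whole $4$-cycle of $I = C_4$; in that case each $R_i$ contributes at least one edge, both colours appear at least twice, and the cycle is not almost-monochromatic. The hardest step will be the intersection-graph classification; once $I = C_4$ is established, the NAC-verification is routine via the simple-cycle analysis above.
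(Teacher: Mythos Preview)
Your proposal is correct and follows essentially the same approach as the paper: reduce to $\ell=4$ via \Cref{lem:logstableseps}, establish that the intersection graph of the four rigid components is a $4$-cycle, and then take the three balanced bipartitions as NAC-colourings. The paper compresses your intersection-graph case analysis into the single sentence ``By using the observation above once more, as well as possibly reordering $R_1,\ldots,R_4$, we may also assume that $R_i \cap R_{i+1} \neq \varnothing$ and $R_i \cap R_{i+2} = \varnothing$,'' and likewise leaves the NAC-verification to the reader; your write-up simply fills in these details. One small wording fix: in the $K_4-e$ case the two triangles force their common vertices in $R_3\cap R_4$ to coincide, which then contradicts $R_1\cap R_2=\varnothing$ directly rather than producing a cut vertex---but the conclusion that $K_4-e$ is impossible stands.
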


\begin{proof}
    Let $R_1,\ldots,R_\ell$ denote the rigid components of $G$; by the observation in the previous paragraph, we have $\ell \geq 4$. If $\ell \geq 5$, then the bound given by \Cref{lem:logstableseps} suffices.
    
    Hence, we may assume that $\ell = 4$. By using the observation above once more, as well as possibly reordering $R_1,\ldots,R_4$, we may also assume that $R_i \cap R_{i+1} \neq \varnothing$ and $R_i \cap R_{i+2} = \varnothing$ for all $i \in [4]$, where subscripts are taken cyclically. It is not difficult to deduce now that any colouring for which two of $E(R_1), \dots, E(R_4)$ are completely red while the other two are completely blue is a NAC-colouring. Therefore, $\nnac{G} \geq 3$.  
\end{proof}

The $2$-connectivity assumption cannot be removed in \Cref{lem:flexibleNAC3}: gluing two complete graphs along a single vertex yields a flexible graph with a unique NAC-colouring, up to swapping colours. More generally, the following simple observation expresses the number of NAC-colourings of a graph in terms of the number of NAC-colourings of its blocks.
Recall that a \emph{block} in a graph $G$ is either a maximal $2$-connected subgraph of $G$ or a bridge.  Note that under this definition, an isolated vertex is not a block. Moreover, adding isolated vertices to a graph clearly does not change its number of NAC-colourings.  

\begin{lemma} \label{lem:blockproduct}
    Let $G$ be a graph and $B_1, \dots, B_k$ be the blocks of $G$.  Then 
    \[
    \nnac{G}=\frac{1}{2}\prod_{i=1}^k (2\nnac{B_i}+2)-1
    \]
\end{lemma}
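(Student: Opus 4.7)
My plan is to reduce the formula to a simple product formula by introducing the class of \emph{near-NAC-colourings}: an edge colouring $c \colon E(H) \to \{\red, \blue\}$ (not required to be surjective) such that every cycle is either monochromatic or contains at least two edges of each colour. Equivalently, $c$ is either a NAC-colouring, or the entirely red or entirely blue colouring. Since $H$ has at least one edge (every block of our graph does), the number of near-NAC-colourings of $H$ is exactly $2\nnac{H}+2$. The target formula can thus be rewritten as
\[
2\nnac{G}+2 \;=\; \prod_{i=1}^{k}\bigl(2\nnac{B_i}+2\bigr),
\]
i.e.\ the number of near-NAC-colourings of $G$ equals the product of the number of near-NAC-colourings of its blocks.

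The key observation is that every cycle in $G$ is contained entirely in a single block, since cycles are $2$-connected subgraphs. Using the characterisation recalled in \Cref{sec:prelims} (no almost-monochromatic cycle), this means a $2$-edge-colouring of $G$ is near-NAC if and only if its restriction to each block $B_i$ is near-NAC. Because the blocks partition $E(G)$, assigning an arbitrary near-NAC-colouring to each block and combining them yields a well-defined near-NAC-colouring of $G$, and this gives a bijection between near-NAC-colourings of $G$ and tuples of near-NAC-colourings of the blocks. Hence the number of near-NAC-colourings of $G$ is $\prod_{i=1}^{k}\bigl(2\nnac{B_i}+2\bigr)$.

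Finally, to convert from near-NAC-colourings back to NAC-colourings, I would subtract the $2$ monochromatic colourings of $G$ (these correspond to the unique tuples where each block is coloured entirely red, or each block is coloured entirely blue). This yields $2\nnac{G} = \prod_{i=1}^{k}(2\nnac{B_i}+2) - 2$, and dividing by two gives the claimed identity. There is no real obstacle here: the proof is a one-line product formula once the near-NAC reformulation is in place, and the only thing to double-check is the edge case of bridge blocks, where $\nnac{B_i}=0$ and the factor $2\nnac{B_i}+2 = 2$ correctly counts the two ways of colouring the bridge.
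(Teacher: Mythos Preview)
Your proposal is correct and takes essentially the same approach as the paper: the paper defines the set $C$ of colourings whose restriction to each block is monochromatic or a NAC-colouring (your ``near-NAC-colourings''), notes $|C|=\prod_i(2\nnac{B_i}+2)$, and subtracts the two monochromatic colourings. Your explicit invocation of the fact that every cycle lies in a single block is exactly what underlies the paper's claim that every colouring in $C$ is a NAC-colouring of $G$ except the two monochromatic ones.
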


\begin{proof}
    Let $C$ be the set of colourings $c\colon E(G) \to \{\red, \blue\}$ such that the restriction of $c$ to each $B_i$ is either monochromatic or a NAC-colouring.  Note that $|C|=\prod_{i=1}^k (2\nnac{B_i}+2)$.  Every colouring in $c$ is a NAC-colouring of $G$ except the two monochromatic colourings.  Thus, 
\[
    \nnac{G} \geq \frac{|C|-2}{2}=\frac{1}{2}\prod_{i=1}^k (2\nnac{B_i}+2)-1.
\]
    For the other inequality, observe that every NAC-colouring of $G$ restricts to either a NAC-colouring or a monochromatic colouring on each $B_i$.  
\end{proof}

By combining the previous results, we can deduce the following description of the family of flexible graphs for which $\nnac{G}$ is minimal.

\begin{proposition}\label{prop:flexibleOneNacChar}
    Let $G$ be a flexible graph. Then $\nnac{G}=1$  if and only if $G$ has a  separation $\{G_1,G_2\}$ with $G_1\cap G_2=K_1$ and $\nnac{G_i}=0$ for each $i \in \{1,2\}$.
\end{proposition}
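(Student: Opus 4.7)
The plan is to apply \Cref{lem:blockproduct} to express $\nnac{G}$ in terms of the NAC-counts of the blocks of $G$. Writing $B_1,\dots,B_k$ for the blocks, the formula gives
\[
\nnac{G} = \frac{1}{2}\prod_{i=1}^k\bigl(2\nnac{B_i}+2\bigr) - 1.
\]
Each factor $2\nnac{B_i}+2$ is at least $2$, so $\nnac{G}=1$ is equivalent to $\prod_i(2\nnac{B_i}+2)=4$. Combined with the lower bound on each factor, only two configurations are possible: either (i) $k=1$ and $\nnac{B_1}=1$, or (ii) $k=2$ with $\nnac{B_1}=\nnac{B_2}=0$. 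Any larger number of blocks contributes a product of at least $8$.

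For the forward direction I will rule out case (i) using the flexibility of $G$. A single-block graph is either a single edge, which is rigid (and has $\nnac=0$), or else $2$-connected; in the latter case \Cref{lem:flexibleNAC3} forces $\nnac{G}\geq 3$, contradicting $\nnac{G}=1$. So case (ii) must hold, and taking $\{G_1,G_2\}=\{B_1,B_2\}$ yields the required separation, with $G_1\cap G_2$ consisting of the single cut vertex shared by the two blocks.

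For the converse, given a separation $\{G_1,G_2\}$ with $G_1\cap G_2=K_1$ and $\nnac{G_i}=0$, I will apply \Cref{lem:blockproduct} to each $G_i$ separately. The equation $\prod_j(2\nnac{B_{i,j}}+2)=2$, together with each factor being at least $2$, forces each $G_i$ to consist of a single block. The shared vertex is a cut vertex of $G$, so the blocks of $G$ are precisely $G_1$ and $G_2$; substituting into the block product formula gives $\nnac{G}=\tfrac{1}{2}\cdot 2\cdot 2-1=1$.

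The main obstacle is case (i) of the forward direction: this is the only place where we genuinely need the flexibility of $G$, and it is handled by invoking \Cref{lem:flexibleNAC3}. Everything else is straightforward arithmetic with the block product formula.
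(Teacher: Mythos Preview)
Your proof is correct and follows essentially the same approach as the paper's: both directions hinge on \Cref{lem:blockproduct} together with \Cref{lem:flexibleNAC3} to rule out the single-block case. The only minor difference is in the converse direction, where the paper argues that $\nnac{G_i}=0$ forces $G_i$ to have no stable cut (via \Cref{thm:stableCutImpliesNAC}) and hence to be $2$-connected, whereas you reapply \Cref{lem:blockproduct} to each $G_i$ to reach the same conclusion that $G_i$ is a single block; both arguments are valid and equally short.
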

\begin{proof}
    Suppose $\nnac{G}=1$.  Let $B_1,B_2,\dots, B_k$ be the blocks of $G$.
    By \Cref{lem:flexibleNAC3}, $k \neq 1$.  By~\Cref{lem:blockproduct}, $k=2$ and neither $B_1$ nor $B_2$ has a NAC-colouring. 
    
    For the other implication, suppose $G$ has a separation $\{G_1,G_2\}$ with $G_1\cap G_2=K_1$  and $\nnac{G_i}=0$ for each $i \in \{1,2\}$.  Since $G_i$ has no NAC-colouring, it has no stable cuts, and in particular, it is $2$-connected.  Therefore, each $G_i$ must be a block of $G$. Hence $\nnac{G}=1$ by~\Cref{lem:blockproduct}.
\end{proof}

\section{Concluding remarks}
\label{sec:conclusion}

We finish the paper by highlighting some open problems. The first one concerns graphs $G$ with $\nnac{G} = 1$. 
From \Cref{prop:flexibleOneNacChar}, it is not difficult to deduce that the problem of deciding whether $\nnac{G} = 1$ holds for a flexible graph $G$ is equivalent to the problem of deciding whether $\nnac{H} = 0$ holds for a general graph $H$; in particular, it is coNP-hard. We can deduce the same hardness result for rigid graphs by combining the hardness proof for the existence of a NAC-colouring in \cite{Garamvolgyi2022} with the fact that the unique satisfiability problem is coNP-hard \cite{blass.gurevich_1982}. 

Thus, let us focus on the case of minimally rigid graphs.
In this setting, we pose the following conjecture, which we have verified for graphs on up to $10$ vertices by computer.

\begin{conjecture}
    \label{conj:uniqueNACmindegTwo}
    Let $G$ be a minimally rigid graph.
    We have $\nnac{G}=1$ if and only if
    \begin{enumerate}
        \item $G\in\noStableCutThree$ and it has exactly one $3$-prism subgraph, or
        \item $G$ is a $0$-extension graph with exactly one open step.
    \end{enumerate}
\end{conjecture}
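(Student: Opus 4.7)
The key tool is a generalisation of \Cref{lem:gluing}: for any two graphs $G_1,G_2$ such that $G_1\cap G_2$ is a clique (an edge or a triangle),
\[
    \nnac{G_1\cup G_2} \;=\; (\nnac{G_1}+1)(\nnac{G_2}+1)-1.
\]
The proof follows the same template as \Cref{lem:gluing}, exploiting the fact that the shared clique is monochromatic in every NAC-colouring and that any cycle spanning both sides decomposes into two cycles each lying in one $G_i$ after inserting a clique edge. Iterating along the recursive definition of $\noStableCutThree$ gives $\nnac{G}+1 = 2^k$, where $k$ is the number of 3-prisms used in the construction (3-cycles, with $\nnac=0$, contribute a factor of $1$, while 3-prisms contribute a factor of $2$). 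A separate lemma, established by a case analysis on the $3$-regular triangle structure of the 3-prism, identifies $k$ with the number of 3-prism subgraphs of $G$. Together these handle both directions of the equivalence in condition~(i).

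For condition (ii), I would argue as in \Cref{prop:minrigidNACbound} and the surrounding discussion: closed 0-extensions preserve $\nnac{G}$ via the bijection of NAC-colourings forced by the new triangle being monochromatic, while a single open 0-extension applied to a 2-tree produces exactly one NAC-colouring (the 2-tree is forced to be monochromatic by triangle propagation, and the two new edges must take the opposite colour by considering paths between their non-adjacent endpoints). This yields the forward direction of (ii). For the converse, assume $G$ is minimally rigid with $\nnac{G}=1$ and not in $\noStableCutThree$; then by \Cref{thm:stableCutMinusThree} $G$ has a stable cut, and by \Cref{lem:NAPiffStableCut} each stable separation yields a distinct NAP-colouring, so the stable separation must be unique. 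I would proceed by induction on $|V(G)|$: if $G$ has a degree-$2$ vertex $v$ with adjacent neighbours, then $G-v$ is minimally rigid with $\nnac{G-v}=1$ and inherits the stable cut of $G$, so induction applies; if $G$ has a degree-$2$ vertex $w$ with non-adjacent neighbours $x,y$, the key claim is that $\nnac{G-w}=0$, so that $G-w$ is a 2-tree by \Cref{thm:mainlaman}, making $G$ a 0-extension graph with exactly one open step.

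To establish the key claim, I would show that any NAC-colouring $c'$ of $G-w$ extends to a NAC-colouring of $G$ distinct from the extensions of the monochromatic colourings, thereby forcing $\nnac{G}\geq 2$. Using the induced-subgraph property of monochromatic components (\cite[Lemma~2.4]{GLS2019}), if $x,y$ lie in the same red component of $c'$ then every path between them has either $0$ or at least $2$ blue edges, so $c(wx)=c(wy)=\text{red}$ produces a valid NAC-colouring; a symmetric argument handles the case when $x,y$ lie in the same blue component, and a mixed extension $c(wx)\neq c(wy)$ handles the remaining case when $x,y$ lie in distinct red and distinct blue components. The main obstacle is the residual case in which $G$ has no degree-$2$ vertex at all, so that neither inductive reduction applies. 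Ruling this out requires showing that a minimally rigid graph with minimum degree at least three, a stable cut, and $\nnac{G}=1$ cannot exist; I expect this to follow by using the uniqueness of the stable separation, together with the minimal-rigidity constraints on each side of the cut (in particular, that exactly one of the two completions $G_i + \Delta$ obtained by adding the virtual cut-clique is minimally rigid), to produce a second stable separation, contradicting the uniqueness.
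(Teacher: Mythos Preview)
The statement you are attempting to prove is a \emph{conjecture}, not a theorem: the paper explicitly presents it as open, verified only by computer for graphs on at most $10$ vertices. The only part the paper claims is that the ``if'' direction (graphs of type (i) or (ii) satisfy $\nnac{G}=1$) is ``not difficult to verify'', together with the formula $\nnac{G}=2^p-1$ for $G\in\noStableCutThree$, where $p$ counts the 3-prisms \emph{in a recursive construction}. There is therefore no paper proof to compare your proposal against.

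Your treatment of the ``if'' direction is essentially correct and matches the paper's sketch: the clique-gluing formula $\nnac{G_1\cup G_2}=(\nnac{G_1}+1)(\nnac{G_2}+1)-1$ handles (i), and the observations that closed $0$-extensions preserve $\nnac{G}$ while a single open $0$-extension on a $2$-tree produces $\nnac{G}=1$ handle (ii). One small caveat: the conjecture speaks of $3$-prism \emph{subgraphs} of $G$, whereas the gluing formula counts $3$-prisms used in a construction, and your ``separate lemma'' identifying the two is asserted rather than argued.

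For the ``only if'' direction, your inductive reduction via degree-$2$ vertices is sound as far as it goes, including the nice inequality $\nnac{G}\geq 1+\nnac{G-w}$ for an open $0$-extension at $w$. But the case you flag as ``the main obstacle'' is a genuine gap: a minimally rigid graph with a stable cut, $\nnac{G}=1$, and minimum degree at least $3$ is not ruled out by anything you have written. Such a graph cannot be a $0$-extension graph at all, so proving the conjecture in this case amounts to showing no such graph exists --- which is precisely the open content of the conjecture. Your proposed fix (``produce a second stable separation'' via rigidity constraints on the two sides) is speculative; uniqueness of the stable separation does not obviously interact with minimal rigidity in the way you suggest, and the paper gives no indication that this line succeeds. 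In short, you have reproduced the easy direction and correctly located the hard one, but have not closed it.
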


It is not difficult to verify that the graphs appearing  in \Cref{conj:uniqueNACmindegTwo} satisfy $\nnac{G} = 1$. More generally, for each graph $G \in \noStableCutThree$, we have $\nnac{G} = 2^p - 1$, where $p$ is the number of $3$-prism subgraphs used in the recursive construction of $G$.

In the opposite direction, it is still open to describe more precisely the growth rate of
\[
    M_n=\max\{\nnac{G}:G\text{ minimally rigid}, |V(G)|=n\}\,.
\]
In particular, what is $\lim_{n\to\infty} M_n^{1/n}$? It follows from \Cref{lem:gluing} that the limit exists, and from \Cref{thm:stableCutImpliesNAC} and \Cref{cor:2+} that it is greater than $2.13$ but at most $4$.
It seems likely that with more computation a better base graph to improve the lower bound will be found, and so the key question is whether the upper bound is tight.

Another interesting open problem is to determine the complexity of the following problems: given a minimally rigid graph $G$ and a pair of edges $e,f \in E(G)$, is there a NAC-colouring of $G$ in which $e$ and $f$ receive different colours (resp.\ the same colour)? Using the connection between NAC-colourings and motions, this is equivalent to the problem of deciding whether $G$ has a motion in which the angle of $e$ and $f$ changes (resp.\ remains constant).

Let us also highlight that for graphs with $n$ vertices and $2n-2$ edges, both deciding the existence of a NAC-colouring and deciding the existence of a stable cut is open. Both of these problems appear to be challenging.

Recall that \Cref{thm:chenyu} tells us that every graph on $n$ vertices and at most $2n-4$ edges has a stable cut. Chernyshev, Rauch and  Rautenbach \cite{chernyshev.etal_2024} recently conjectured that every graph $G$ on $n$ vertices and at most $3n-7$ edges has a cut $X$ such that $G[X]$ is a forest. Since we showed in \Cref{cor:flexiblestablecut} that every $2$-flexible graph has a stable cut, it is tempting to conjecture that every $3$-flexible graph has a forest cut. However, this is not the case: the Cartesian product of $K_2$ and $K_5$ is $3$-flexible, but has no forest cut.

Finally, the natural problem of determining the threshold probability for a binomial random graph to have no NAC-colouring was recently settled by a subset of the present authors \cite{thresholds}, who also showed that this threshold coincides with the threshold for having no stable cut. However, this leaves open other directions in random graphs such as estimating the number of NAC-colourings below this threshold, or determining the threshold for a forest cut.

\section*{Acknowledgements}

This project originated from the Fields Institute Focus Program on Geometric Constraint Systems in Toronto and a substantial part of the work was done during a Research-in-Groups Programme funded by the International Centre for Mathematical Sciences, Edinburgh.
The authors are grateful to both institutions for their hospitality and generous financial support. 

The authors would like to thank P.\ Laštovička for help with the code for calculating the number of NAC-colourings of minimally rigid graphs
and implementing \Cref{alg:stableCutFlexible} in \textsc{PyRigi},
and S.\ Villányi for the counterexample to the possible conjecture in the second to last paragraph.

K.\,C.\ was supported by the Australian Government through the Australian Research
Council’s Discovery Projects funding scheme (project DP210103849). 
D.\,G.\ was supported by the Lend\"ulet Programme of the Hungarian Academy of Sciences, grant number LP2021-1/2021.
T.\,H.\ was supported by the Institute for Basic Science (IBS-R029-C1).
J.\,L.\ was supported by the Czech Science Foundation (GAČR), project No.\ 22-04381L\@.
A.\,N.\ was partially supported by EPSRC grant EP/X036723/1.

\bibliographystyle{abbrvurl}
\footnotesize{
\bibliography{biblio}}
\end{document}